\numberwithin{equation}{section} 
   \def\MR#1{}
\definecolor{seagreen}{RGB}{46,139,87}
\definecolor{maroon}{RGB}{128,0,0}
\definecolor{darkviolet}{RGB}{148,0,211}
\definecolor{twelve}{RGB}{100,100,170}
\definecolor{thirteen}{RGB}{100,150,50}
\definecolor{fourteen}{RGB}{200,0,0}
\definecolor{fifteen}{RGB}{0,200,0}
\definecolor{sixteen}{RGB}{0,0,200}
\definecolor{seventeen}{RGB}{200,0,200}
\definecolor{eighteen}{RGB}{0,200,200}
\newcommand{\C}{C_2}
\newcommand{\Rdelta}{{\mathbb{R}^\delta}}
\newcommand{\R}{\mathbb{R}}
\newcommand{\RC}{\mathbb{R}[\C]}
\newcommand{\CL}{C_2\mathcal{L}}
\newcommand{\Jmor}[3]{C_2\mathcal{J}_{#1}(#2,#3)}
\newcommand{\Gmor}[3]{C_2\mathcal{\gamma}_{#1}(#2,#3)}
\newcommand{\CTop}{\C\TTop}
\newcommand{\COTop}{(O(p,q)\rtimes\C)\TTop}
\newcommand{\TTop}{\text{Top}}
\newcommand{\Jpq}{C_2\mathcal{J}_{p,q}}
\newcommand{\Jzero}{C_2\mathcal{J}_{0,0}}
\newcommand{\Jzerobar}{\overline{C_2\mathcal{J}_{0,0}}}
\newcommand{\Epq}{C_2\mathcal{E}_{p,q}}
\newcommand{\Ezero}{C_2\mathcal{E}_{0,0}}
\newcommand{\LL}{ \mathcal{L} }
\newcommand{\OEpq}{O(p,q)C_2\mathcal{E}_{p,q}}
\newcommand{\OEpql}{O(p,q)C_2\mathcal{E}_{p,q}^{l}}
\newcommand{\CJ}[2]{C_2\mathcal{J}_{#1,#2}}
\newcommand{\CE}[2]{C_2\mathcal{E}_{#1,#2}}
\newcommand{\OCE}[2]{O(#1,#2)C_2\mathcal{E}_{#1,#2}}
\newcommand{\taupq}{\tau_{p,q}}
\newcommand{\Tpq}{T_{p,q}}
\newcommand{\pqs}{(p,q)\mathbb{S}}
\newcommand{\Ilevel}{I_{level}}
\newcommand{\Jlevel}{J_{level}}
\newcommand{\Jstable}{J_{stable}}
\DeclareMathOperator{\ind}{ind}
\DeclareMathOperator{\Nat}{Nat}
\DeclareMathOperator{\res}{res}
\DeclareMathOperator{\colim}{colim}
\DeclareMathOperator{\hocolim}{hocolim}
\DeclareMathOperator{\hofibre}{hofibre}
\DeclareMathOperator{\holim}{holim}
\DeclareMathOperator{\Dim}{dim}
\DeclareMathOperator{\Min}{min}
\DeclareMathOperator{\conn}{conn}
\DeclareMathOperator{\Ho}{Ho}
\DeclareMathOperator{\CI}{CI}
\DeclareMathOperator{\proj}{proj}
\DeclareMathOperator{\poly}{-poly-}
\DeclareMathOperator{\homog}{-homog-}
\DeclareMathOperator{\id}{id}
\DeclareMathOperator{\Id}{Id}
\DeclareMathOperator{\Fun}{Fun}
  \newcommand{\adjunction}[4]{
\xymatrix{
#1:#2 \ar@<.5ex>[r] &
\ar@<.5ex>[l] #3:#4
}}
\newtheorem{xxthm}{Theorem}
\newtheorem{xxprop}[xxthm]{Proposition}
\newtheorem{thm}{Theorem}[subsection]
\newtheorem{prop}[thm]{Proposition}
\newtheorem{lem}[thm]{Lemma}
\newtheorem{cor}[thm]{Corollary}
\newtheorem*{thm*}{Theorem}
\newtheorem*{ex*}{Example}
\newtheorem*{exs*}{Examples}
\theoremstyle{definition}
\newtheorem{definition}[thm]{Definition}
\newtheorem{ex}[thm]{Example}
\newtheorem{rem}[thm]{Remark}
\begin{document}

\title{$\C$ - Equivariant Orthogonal Calculus}
\author{Emel Yavuz}
\date{\today}

\begin{abstract}\fontsize{10pt}{13pt}\selectfont
In this paper, we construct a version of orthogonal calculus for functors from $\C$-representations to $\C$-spaces, where $\C$ is the cyclic group of order 2. For example, the functor $BO(-)$, that sends a $\C$-representation to the classifying space of its orthogonal group, which has a $\C$-action induced by the action on the $\C$-representation. We obtain a bigraded sequence of approximations to such a functor, and via a zig-zag of Quillen equivalences, we prove that the homotopy fibres of maps between approximations are fully determined by orthogonal spectra with a genuine action of $\C$ and a naive action of the orthogonal group $O(p,q):=O(\mathbb{R}^{p+q\delta})$. 
\end{abstract}

\maketitle

\section{Introduction}

Given a real function $f:\R\rightarrow\R$, Taylor's theorem describes how $f$ can be approximated by a sequence of polynomial functions, which are built using the derivatives of $f$. As a result, in order to study complicated functions, it suffices to study polynomial functions, which are well understood. 

Many objects studied in algebraic topology can be realised as functors. Functor calculus is a method by which one can approximate a given functor by a sequence of functors with `nice' properties, which we call polynomial functors. The resulting sequence of functors is a similar concept to that of a Postnikov tower. Polynomial functors have properties that mimic those of the polynomial functions used in differential calculus. For example, an $n$-polynomial functor is also $(n+1)$-polynomial and the $(n+1)$-st derivative of an $n$-polynomial functor is trivial. 

There are many different branches of functor calculus designed to study different categories of functors. Goodwillie calculus, originally constructed by Goodwillie \cite{Goo90, Goo92, Goo03}, is used to study endofunctors on the category of topological spaces. The fibres of the tower produced by Goodwillie calculus are classified by spectra with an action of the symmetric group $\Sigma_n$. The main focus of this paper is the orthogonal homotopy calculus first constructed by Weiss in \cite{Wei95}. It is the branch of functor calculus involving the study of functors from the category of finite dimensional real vector spaces to the category of pointed topological spaces. The tower for a functor $F$ produced by orthogonal calculus looks as follows,   
\[\begin{tikzcd}
	&& \vdots \\
	&& {T_2F(V)} & {\Omega^\infty \left[\left(S^{2V} \wedge \Theta_F^2\right)_{hO(2)}\right]} \\
	&& {T_1F(V)} & {\Omega^\infty \left[\left(S^{1V} \wedge \Theta_F^1\right)_{hO(1)}\right]} \\
	{F(V)} && {F(\R^\infty)}
	\arrow[from=1-3, to=2-3]
	\arrow[from=2-3, to=3-3]
	\arrow[from=3-3, to=4-3]
	\arrow[from=3-4, to=3-3]
	\arrow[from=4-1, to=4-3]
	\arrow[from=2-4, to=2-3]
	\arrow[curve={height=-6pt}, from=4-1, to=3-3]
	\arrow[curve={height=-12pt}, from=4-1, to=2-3]
\end{tikzcd}\]
where each functor $T_nF$ is $n$-polynomial and the $n^\text{th}$ fibre of the tower is fully determined by an orthogonal spectrum $\Theta_F^n$ with an action of the orthogonal group $O(n)$.

Classic examples of functors studied using orthogonal calculus include:
\begin{itemize}
    \item $BO(-):V\mapsto BO(V)$
    \item $B\Top(-):V\mapsto B\Top(V)$
    \item $B\text{Diff}^b(-):V\mapsto B\text{Diff}^b(M\times V)$
\end{itemize}
where $BO(V)$ is the classifying space of the space of linear isometries on V, $B\Top(V)$ is the classifying space of the space of homeomorphisms on V and, for a smooth compact manifold $M$, $B\text{Diff}^b(M\times V)$ is the classifying space of the space of bounded diffeomorphisms on $M\times V$. 

There exist functors, similar to those above, that have group actions. For example, the functor $BO(-):V\rightarrow BO(V)$ that sends a $G$-representation to its classifying space. As such, there is a natural motivation to construct functor calculi that study functors with a group action. An equivariant orthogonal calculus of this type could have applications in many different areas, such as the study of equivariant diffeomorphisms of $G$-manifolds. Extensive research focused on equivariance in the Goodwillie calculus setting has been carried out by Dotto \cite{Dot16a,Dot16b,Dot17} and Dotto and Moi \cite{DM16}. 

\subsection*{Main results and organisation}

The $\C$-equivariant orthogonal calculus gives a method for studying functors from $\C$-representations to the category of $\C$-spaces. For example, the functor $$BO(-):V\mapsto BO(V),$$where $BO(V)$ is the classifying space of the orthogonal group $O(V)$, which has a $\C$-action induced by the action on the $\C$-representation $V$.

We index the $\C$-equivariant orthogonal calculus over a category of $\C$-representations, which are isomorphic to representations of the form $\R^{p+q\delta}=\R^p\oplus \R^{q\delta}$, where $\Rdelta$ is the sign $\C$-representation. The main result, Theorem \ref{thm A}, is the classification of $(p,q)$-homogeneous functors (defined in Section \ref{sec: homog functors}), which are the $\C$-equivariant analogue of functors that are $n$-homogeneous in orthogonal calculus (functors with polynomial approximations concentrated in degree $n$). We show that $(p,q)$-homogeneous functors are fully determined by genuine orthogonal $\C$-spectra with an action of the orthogonal group $O(p,q):=O(\R^{p+q\delta})$, which has a specified $\C$-action given in Definition \ref{def:O(p,q) and matrix A}. That is, orthogonal spectra with a genuine action of $\C$ and a naive action of $O(p,q)$, denoted $\C Sp^{\mathcal{O}}[O(p,q)]$. In this way, we get a richer equivariant structure compared to that of calculus with reality \cite{Tag22real}, in which the classification is in terms of spectra with a naive action of $\C\ltimes U(n)$.  

\begin{xxthm}[{Theorem \ref{weissclassification}}]\label{thm A}
Let $p,q\geq 1$. If $F$ is a $(p,q)$-homogeneous functor, then $F$ is objectwise weakly equivalent to 
\begin{equation*}
    V\mapsto \Omega^\infty[(S^{(p,q)V}\wedge\Theta_F^{p,q})_{hO(p,q)}],
\end{equation*}
where $\Theta_F^{p,q}\in \C Sp^{\mathcal{O}}[O(p,q)]$ and $(-)_{hO(p,q)}$ denotes homotopy orbits. 

Conversely, every functor of the form 
\begin{equation*}
    V\mapsto \Omega^\infty[(S^{(p,q)V}\wedge\Theta)_{hO(p,q)}],
\end{equation*}
where $\Theta\in \C Sp^{\mathcal{O}}[O(p,q)]$, is $(p,q)$-homogeneous. 
\end{xxthm}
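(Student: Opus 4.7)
The plan is to prove Theorem~\ref{thm A} by establishing a zig-zag of Quillen equivalences between a model category of $(p,q)$-homogeneous functors and the category $\CSpO$, following the structural template of Weiss's classical classification of $n$-homogeneous functors but adapted to the bigraded $\C$-equivariant setting. The intermediate category is foreshadowed by the paper's notation: the category $\CE{p}{q}$ of continuous $\C$-equivariant functors from the $\C$-jet category $\CJ{p}{q}$ to pointed $\C$-spaces, equipped with its $O(p,q)$-equivariant objects $\OCE{p}{q}$. Once the derived equivalences are unravelled on representing objects, their composite should be exactly the formula $V\mapsto \Omega^\infty[(S^{(p,q)V}\wedge\Theta)_{hO(p,q)}]$.

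For the converse (``formula implies homogeneous'') direction, given $\Theta\in \CSpO$, I would first check that $V\mapsto \Omega^\infty[(S^{(p,q)V}\wedge\Theta)_{hO(p,q)}]$ is $(p,q)$-polynomial using the adjunction between $\Sigma^{(p,q)V}$ and $\Omega^{(p,q)V}$ together with the characterisation of $(p,q)$-polynomial functors via homotopy-cartesian cubes indexed by representation subspaces. I would then show that the approximations in lower bidegrees vanish by a standard cofinality and connectivity argument on the shifted spectrum $S^{(p,q)V}\wedge\Theta$, which forces the functor to be $(p,q)$-homogeneous.

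For the forward direction, given a $(p,q)$-homogeneous $F$, the orthogonal $\C$-spectrum $\Theta_F^{p,q}$ would be extracted as the $(p,q)$-th derivative of $F$: the values of $F$ pulled back along the iterated structure maps of $\CJ{p}{q}$ assemble, under a suitable strictification, into an object of $\OCE{p}{q}$, and this is transported to $\CSpO$ by a stabilisation-desuspension adjunction. The naive $O(p,q)$-action comes from the symmetries of $\R^{p+q\delta}$, while the genuine $\C$-action comes from functoriality of $F$ on $\C$-representations. The main categorical work is to verify that this construction is right Quillen and that the derived unit and counit are equivalences on cofibrant-fibrant objects of the $(p,q)$-homogeneous model structure, reducing the statement to bijectivity of a derived hom computation.

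The principal obstacle will be setting up the model-theoretic machinery so that the two kinds of equivariance fit together coherently. The $\C$-action on $\R^{p+q\delta}$ acts on $O(p,q)$ itself by conjugation, so $O(p,q)$ carries nontrivial $\C$-structure, and one must verify that homotopy orbits with respect to the naive $O(p,q)$-action interact correctly with the genuine $\C$-action to produce $\C$-spaces with the expected fixed-point behaviour. Concretely, this requires a mixed model structure on $\CSpO$ (genuine in $\C$, naive in $O(p,q)$) whose stable equivalences are detected by the $\C$-equivariant stable homotopy groups after forgetting the $O(p,q)$-action, and checking that each link in the zig-zag---especially the passage $\OCE{p}{q}\rightleftarrows \CSpO$---is simultaneously Quillen with respect to both structures. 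Once this technical foundation is secured, the remainder of the argument proceeds in the spirit of Weiss's original proof.
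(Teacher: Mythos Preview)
Your overall architecture matches the paper's: a zig-zag of Quillen equivalences through the intermediate category $\OEpq$, with the mixed model structure (genuine in $\C$, naive in $O(p,q)$) and the derivative $\ind_{0,0}^{p,q}\varepsilon^*$ as the right adjoint. The concern you raise about the semi-direct structure on $O(p,q)\rtimes\C$ is exactly the one the paper addresses, and your description of how $\Theta_F^{p,q}$ is extracted is correct in outline.

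There is, however, a genuine step you are treating as automatic. You write that once the derived equivalences are ``unravelled on representing objects, their composite should be exactly the formula $V\mapsto \Omega^\infty[(S^{(p,q)V}\wedge\Theta)_{hO(p,q)}]$.'' This is not immediate. The derived left adjoint $\res_{0,0}^{p,q}/O(p,q)$ applied to $\alpha_{p,q}^*\Theta$ produces (up to equivalence) the functor $V\mapsto [\Omega^\infty(S^{(p,q)V}\wedge\Theta)]_{hO(p,q)}$, with the homotopy orbits \emph{outside} the infinite loop space. To get the stated formula one must commute $(-)_{hO(p,q)}$ past $\Omega^\infty$, and these do not commute on the nose. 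The paper handles this via an equivariant Freudenthal-type connectivity estimate (Theorem~\ref{loopsholimthm} and Example~\ref{6.4}): the comparison map is shown to have connectivity growing with $\dim V$ in a way that makes it a $T_{p+1,q}T_{p,q+1}$-equivalence, so the two functors have the same $(p,q)$-polynomial approximation. Without this step the zig-zag gives an abstract equivalence of homotopy categories but not the explicit formula.

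For the converse direction, your sketch via cartesian cubes and connectivity is plausible but differs from what the paper actually does. The paper exploits that the candidate functor has a delooping, so by Corollary~\ref{cor: F delooping} it suffices to show the $(p+1,q)$- and $(p,q+1)$-derivatives vanish. It then computes the entire lattice of derivatives $F^{(i,j)}$ by identifying them with explicit functors $F[i,j](V)=\Omega^\infty[(S^{(p,q)V}\wedge\Theta)_{hO(p-i,q-j)}]$, using the $\C$-equivariant identification $O(m,n)/O(m-1,n)\cong S(\R^{m+n\delta})$ (Proposition~\ref{sphere as quotient of orthogonal groups prop}) to peel off one orthogonal direction at a time. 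This identification is what makes the derivative computation tractable and is specific to the bigraded setting; your sketch does not indicate an analogue, and the cube argument alone will not obviously produce it.
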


The classification can alternatively be stated in terms of the following zig-zag of Quillen equivalences between the calculus input category $\Ezero$ (of functors from the category of finite dimensional $\C$-representations with inner product and linear isometries to $\C\TTop_*$) and the category of genuine orthogonal $\C$-spectra with an action of $O(p,q)$. 

\begin{xxthm}[{Theorem \ref{boclassification} and Theorem \ref{QEstabletospectra}}]\label{thm B}
For all $p,q\geq 1$, there exist Quillen equivalences
\[\begin{tikzcd}
	{(p,q)\homog C_2\mathcal{E}_{0,0}} && {O(p,q)C_2\mathcal{E}_{p,q}^s} && {C_2Sp^O[O(p,q)]}
	\arrow["{\ind_{0,0}^{p,q}\varepsilon^*}"', shift right=2, from=1-1, to=1-3]
	\arrow["{\res_{0,0}^{p,q}/O(p,q)}"', shift right=2, from=1-3, to=1-1]
	\arrow["{(\alpha_{p,q})_!}", shift left=2, from=1-3, to=1-5]
	\arrow["{\alpha_{p,q}^*}", shift left=2, from=1-5, to=1-3]
\end{tikzcd}\]
\end{xxthm}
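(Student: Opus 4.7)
The plan is to establish each half of the zig-zag separately and compose. The strategy follows the Weiss orthogonal calculus template, but now carries two separate equivariances throughout: a genuine $C_2$ action on representation spheres and a naive $O(p,q)$ action on the jet--fibre structure. Each Quillen equivalence is obtained by lifting a well-understood adjunction of underlying categories to an appropriate model-categorical statement.

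For the first adjunction $(\ind_{0,0}^{p,q}\varepsilon^*, \res_{0,0}^{p,q}/O(p,q))$, the left adjoint precomposes with the forgetful map $\varepsilon\colon C_2\mathcal{J}_{p,q}\to C_2\mathcal{J}_{0,0}$ and then left Kan extends, while the right adjoint restricts the $(p,q)$-jet diagram and passes to $O(p,q)$-orbits. To show this is a Quillen adjunction from the $(p,q)$-homogeneous model structure on $C_2\mathcal{E}_{0,0}$ to the stable model structure on $O(p,q)C_2\mathcal{E}_{p,q}^s$, I would verify that the right adjoint preserves fibrations and acyclic fibrations: for fibrations this is the standard diagram-level check, and for acyclic fibrations one uses the characterisation of $(p,q)$-homogeneous equivalences as maps inducing equivalences on the $(p,q)$-th stable derivative. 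To upgrade to a Quillen equivalence, I would verify that on cofibrant objects the derived unit recovers the stable derivative and the derived counit lands in the $(p,q)$-homogeneous part. Theorem~\ref{thm A} does most of the work here: once we know every $(p,q)$-homogeneous functor has the form $V\mapsto \Omega^\infty[(S^{(p,q)V}\wedge \Theta)_{hO(p,q)}]$, the derived counit is an equivalence by direct computation against this formula.

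For the second adjunction $((\alpha_{p,q})_!, \alpha_{p,q}^*)$, I would realise $\alpha_{p,q}$ as an enriched functor from (a skeletal version of) $C_2\mathcal{J}_{p,q}$ to the indexing category of genuine orthogonal $C_2$-spectra, set up so that $O(p,q)$-equivariant diagrams on the source correspond to objects with a naive $O(p,q)$-action in spectra. The key point is that the bonding maps defining orthogonal $C_2$-spectra coincide with the structure maps encoded by $C_2\mathcal{J}_{p,q}$ after stabilisation, so $\alpha_{p,q}$ becomes a Dwyer--Kan-type weak equivalence of spectrally enriched categories. The resulting Quillen equivalence of presheaf categories then lifts to the stable genuine $C_2$-model structures because $\alpha_{p,q}$ sends the generating $(p,q)$-representation spheres on the source to the usual generating spheres on the target.

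The main obstacle will be bookkeeping the two equivariances simultaneously. Matching the genuine $C_2$-stability on $O(p,q)C_2\mathcal{E}_{p,q}^s$ with the genuine $C_2$-structure on $C_2Sp^O[O(p,q)]$ requires isolating the correct family of representation spheres to invert, namely those indexed by $C_2$-subrepresentations of $\R^{p+q\delta}$, and verifying that $\alpha_{p,q}$ respects this family. One must also confirm that the $O(p,q)$-action twisted by Definition~\ref{def:O(p,q) and matrix A} is preserved by $\alpha_{p,q}$ throughout stabilisation, so that the naive $O(p,q)$-structure lines up on both sides of the equivalence.
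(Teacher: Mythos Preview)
Your proposal has two genuine gaps that would make the argument fail as written.

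First, you have the handedness of the adjunction $(\res_{0,0}^{p,q}/O(p,q),\ind_{0,0}^{p,q}\varepsilon^*)$ reversed, and with it the descriptions of the functors. In the paper (Definition~\ref{induction definition} and Lemma~\ref{QAstabletohomog}) the restriction-orbit functor $\res_{0,0}^{p,q}/O(p,q)$ is the \emph{left} adjoint from $\OEpq^s$ to $(p,q)\homog\Ezero$, and the induction functor $\ind_{0,0}^{p,q}\varepsilon^*$ is the \emph{right} adjoint, defined as a right Kan extension $U\mapsto \Nat_{0,0}(\Jpq(U,-),\varepsilon^*F)$, not a left Kan extension. The symbol $\varepsilon^*$ does not mean precomposition with a map of jet categories; it simply equips $F$ with the trivial $O(p,q)$-action. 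Your proposed check that ``the right adjoint preserves fibrations'' therefore targets the wrong functor: taking $O(p,q)$-orbits does not preserve fibrations, so the argument cannot proceed as you outline. The paper instead verifies that $\res_{0,0}^{p,q}/O(p,q)$ sends generating (acyclic) cofibrations to (acyclic) cofibrations (Lemma~\ref{Epqlevel Ezero adjunction}), and that $\ind_{0,0}^{p,q}\varepsilon^*$ sends $(p,q)$-polynomial objects to $(p,q)\Omega$-spectra (Proposition~\ref{5.12}).

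Second, and more seriously, your plan to deduce that the derived counit is an equivalence by invoking Theorem~\ref{thm A} is circular. In this paper the forward direction of Theorem~\ref{thm A} (that every $(p,q)$-homogeneous functor has the stated form) is \emph{derived from} the Quillen equivalence of Theorem~\ref{boclassification}, not the other way around; see the proof of Theorem~\ref{weissclassification}, which explicitly passes through the derived zig-zag. What the paper actually uses to prove Theorem~\ref{boclassification} are the direct computations of Example~\ref{5.7}, Example~\ref{6.4}, and Lemma~\ref{BO9.3}: one shows that functors of the form $V\mapsto \Omega^\infty[(S^{(p,q)V}\wedge\Theta)_{hO(p,q)}]$ are $(p,q)$-homogeneous by an explicit derivative calculation, and that the two candidate descriptions of the derived counit agree up to $T_{p+1,q}T_{p,q+1}$-equivalence. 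Your sketch of the second equivalence via a Dwyer--Kan argument for $\alpha_{p,q}$ is plausible in spirit, but the paper's route (following \cite[Proposition~8.3]{BO13}) is to check directly that $\alpha_{p,q}^*$ preserves and reflects stable equivalences and then verify the derived unit on generators.
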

Here $(p,q)\homog C_2\mathcal{E}_{0,0}$ denotes the $(p,q)$-homogeneous model structure on the input category. This model structure captures the structure of $(p,q)$-homogeneous functors, in that the cofibrant-fibrant objects are exactly the projectively cofibrant $(p,q)$-homogeneous functors. This model structure is detailed in Section \ref{sec:homog ms}. The zig-zag of equivalences is made up of two Quillen equivalences. Differentiation (also called induction) forms a Quillen functor from the $(p,q)$-homogeneous model structure to an intermediate category of functors $O(p,q)\C\mathcal{E}_{p,q}^s$, which is in turn Quillen equivalent to the category of genuine orthogonal $\C$-spectra with an action of $O(p,q)$.

In comparison to the underlying calculus which is indexed over $\mathbb{N}$, $\C$-equivariant orthogonal calculus is bi-indexed over $\mathbb{N}\times\mathbb{N}$. As a result, we can define differentiation in two directions (the $p$-direction and the $q$-direction). These different derivatives act like partial derivatives in differential calculus; in particular, they commute. 

A key difference between the underlying and $\C$-equivariant orthogonal calculi is an indexing shift, caused by this bi-indexing. In particular, $\tau_n$ in the underlying calculus is defined using the poset of non-zero subspaces $\{0\neq U\subseteq \R^{n+1}\}$ and $\taupq$ in the $\C$-calculus is defined using the poset of non-zero subspaces $\{0\neq U\subseteq \R^{p+q\delta}\}$. To keep notation consistent, the author introduced the new term strongly $(p,q)$-polynomial, see Definition \ref{def: polynomial}. A functor in the input category for $\C$-equivariant orthogonal calculus is then called $(p,q)$-polynomial if and only if it is both strongly $(p+1,q)$-polynomial and strongly $(p,q+1)$-polynomial. In particular, we define the strongly $(p,q)$-polynomial approximation functor $\Tpq$, and the $(p,q)$-polynomial approximation functor is the composition $T_{p+1,q}T_{p,q+1}$. A functor $X$ is $(p,q)$-homogeneous if it is $(p,q)$-polynomial and the strongly $(p,q)$-polynomial approximation $T_{p,q}X$ is trivial. 

\begin{xxthm}[{Theorem \ref{thm: DYpq is homog}}]
The homotopy fibres of the maps 
\begin{equation*}
    T_{p+1,q}T_{p,q+1}F\rightarrow \Tpq F
\end{equation*}
are $(p,q)$-homogeneous, and can therefore be described in terms of genuine orthogonal $\C$-spectra with an action of $O(p,q)$, by the classification given in Theorem \ref{thm A}. 
\end{xxthm}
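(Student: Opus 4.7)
The plan is to verify the two defining conditions of $(p,q)$-homogeneity for the homotopy fibre $H$ of the natural map
\begin{equation*}
T_{p+1,q}T_{p,q+1}F \longrightarrow \Tpq F,
\end{equation*}
after which the description in terms of $\C Sp^{\mathcal{O}}[O(p,q)]$ is an immediate consequence of Theorem \ref{thm A}.

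For the first condition, I would argue that $H$ is $(p,q)$-polynomial. By construction $T_{p+1,q}T_{p,q+1}F$ is the $(p,q)$-polynomial approximation. The functor $\Tpq F$ is strongly $(p,q)$-polynomial; since the poset $\{0\neq U\subseteq \R^{p+q\delta}\}$ defining $\taupq$ embeds naturally into those defining $\tau_{p+1,q}$ and $\tau_{p,q+1}$, strong $(p,q)$-polynomiality implies strong $(p+1,q)$- and strong $(p,q+1)$-polynomiality, and hence $(p,q)$-polynomiality. As $(p,q)$-polynomiality is a homotopy-limit condition, it is closed under homotopy fibre sequences, so $H$ is $(p,q)$-polynomial.

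For the second condition, I would apply $\Tpq$ to the hofibre sequence. Since $\Tpq$ is constructed as a sequential homotopy colimit of iterated $\taupq$-applications, it commutes with homotopy fibres (via the standard equivariant connectivity argument used for the strongly polynomial approximation functor), giving
\begin{equation*}
\Tpq H \simeq \hofibre\bigl(\Tpq T_{p+1,q}T_{p,q+1}F \longrightarrow \Tpq \Tpq F\bigr).
\end{equation*}
The right-hand target equals $\Tpq F$ by idempotency of $\Tpq$. For the source, I would use that both $F$ and $T_{p+1,q}T_{p,q+1}F$ map into the same strongly $(p,q)$-polynomial object $\Tpq F$, together with the fact that $\Tpq$ inverts the unit map of the $(p,q)$-polynomial approximation. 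This yields $\Tpq T_{p+1,q}T_{p,q+1}F \simeq \Tpq F$, so the map in the displayed hofibre sequence is a weak equivalence, and hence $\Tpq H$ is objectwise weakly contractible.

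The main obstacle is justifying the equivalence $\Tpq T_{p+1,q}T_{p,q+1}F \simeq \Tpq F$ in the bi-indexed $\C$-equivariant setting. This requires a careful analysis of the interaction between the three different posets of $\C$-subrepresentations governing $\taupq$, $\tau_{p+1,q}$ and $\tau_{p,q+1}$, and leverages both the commutation of the partial-derivative functors (highlighted in the introduction as the bi-indexed analogue of commuting partial derivatives in classical calculus) and the universal property of each approximation. Once this commutation statement is in place, the proof follows the familiar hofibre pattern from Weiss's non-equivariant orthogonal calculus, and the resulting $(p,q)$-homogeneous identification combines with Theorem \ref{thm A} to produce the asserted description via objects of $\C Sp^{\mathcal{O}}[O(p,q)]$.
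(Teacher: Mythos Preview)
Your overall structure is correct and matches the paper's proof of Theorem \ref{thm: DYpq is homog}: verify $(p,q)$-polynomiality and $(p,q)$-reducedness separately, the latter by applying $\Tpq$ to the fibre sequence and using $\Tpq T_{p+1,q}T_{p,q+1}F\simeq \Tpq F$ together with idempotency of $\Tpq$. Two remarks are in order.

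First, your parenthetical justification for $\Tpq$ commuting with homotopy fibres is misleading: no connectivity argument is needed. Each $\taupq$ is a homotopy limit and therefore commutes with homotopy fibres, and the sequential homotopy colimit defining $\Tpq$ commutes with finite homotopy limits; that is all.

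Second, the ``main obstacle'' you flag in your final paragraph is less severe than you suggest, and in fact you have already done the work. The equivalence $\Tpq T_{p+1,q}T_{p,q+1}F\simeq \Tpq F$ is Corollary \ref{tlmtpq=tpq} in the paper, and it follows formally from the implication you invoked for the first condition, namely that strongly $(p,q)$-polynomial implies $(p,q)$-polynomial (Proposition \ref{pq poly implies more poly}). Iterating that implication shows that every $T_{l,m}$-equivalence with $l\geq p$ and $m\geq q$ is a $T_{p,q}$-equivalence; since the unit $F\to T_{p+1,q}T_{p,q+1}F$ is a composite of a $T_{p,q+1}$-equivalence and a $T_{p+1,q}$-equivalence, applying $\Tpq$ to it gives an objectwise weak equivalence. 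No separate poset comparison or commutation of partial derivatives is required. Your argument that $(p,q)$-polynomiality is closed under homotopy fibres is correct (the paper records the same fact as Corollary \ref{cor: c2 weiss 5.5}).
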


A key result of the calculus, which makes the classification work, is the existence of the following $\C$-homotopy cofibre sequences. These cofibre sequences tell us that derivatives in $\C$-equivariant orthogonal calculus are well behaved. The notation $\C\mathcal{J}_{p,q}:=\C\mathcal{J}_{\mathbb{R}^{p,q}}$ denotes the $(p,q)$-th jet category whose objects are $\C$-representations and morphisms are given by $\Jmor{p,q}{U}{V}$, which is a $\C$-space (see Definition \ref{def: p,q-jet cat}).

\begin{xxprop}[{Proposition \ref{cofibseq}}]\label{prop D}
For all $U,V,W$ in $\Jzero$, the homotopy cofibre of the map
\begin{equation*}
\Jmor{W}{U\oplus \mathbb{R}^\alpha}{V}\wedge S^{W\otimes \mathbb{R}^\alpha} \rightarrow \Jmor{W}{U}{V}
\end{equation*}
is $\C$-homeomorphic to $\Jmor{W\oplus \mathbb{R}^\alpha}{U}{V}$, where $\mathbb{R}^\alpha$ is either the trivial or sign representation of $\C$, and $S^{W\otimes \mathbb{R}^\alpha}$ denotes the one point compactification of $W\otimes \mathbb{R}^\alpha$.
\end{xxprop}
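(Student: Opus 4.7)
The plan is to adapt Weiss's proof of the analogous non-equivariant cofibre sequence (\cite{Wei95}, Proposition 1.2) to the $\C$-equivariant setting, carefully tracking the $\C$-action throughout. Throughout, I would work with the explicit Thom space description of $\Jmor{W}{U}{V}$ as the $\C$-equivariant Thom space of a $\C$-equivariant vector bundle $\gamma_W(U, V)$ over the $\C$-space $L_\C(U, V)$ of $\C$-equivariant linear isometries $U \to V$.

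The first step is to identify the restriction map $\pi\colon L_\C(U \oplus \mathbb{R}^\alpha, V) \to L_\C(U, V)$ as a $\C$-equivariant sphere bundle. Indeed, a $\C$-equivariant extension of $f\colon U \hookrightarrow V$ is determined by the image of a fixed generator $e_\alpha$ of $\mathbb{R}^\alpha$, which must be a unit vector in the $\mathbb{R}^\alpha$-isotypic component of the orthogonal complement $(V - f(U))^\perp$. This identifies $\pi$ with the sphere bundle $S(E_\alpha)$ of a $\C$-equivariant vector subbundle $E_\alpha$ of the tautological complement bundle.

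Next, I would analyse the pullback $\pi^* \gamma_W(U, V)$. The orthogonal splitting of $(V - f(U))^\perp$ into the line spanned by $g|_{\mathbb{R}^\alpha}(e_\alpha)$ and its complement, combined with the tautological trivialisation of that line provided by the section $g \mapsto g|_{\mathbb{R}^\alpha}(e_\alpha)$, yields a $\C$-equivariant splitting
\[
\pi^* \gamma_W(U, V) \;\cong\; \gamma_W(U \oplus \mathbb{R}^\alpha, V) \,\oplus\, \underline{W \otimes \mathbb{R}^\alpha},
\]
where $\underline{W \otimes \mathbb{R}^\alpha}$ denotes the trivial $\C$-equivariant bundle with fibre $W \otimes \mathbb{R}^\alpha$. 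Passing to $\C$-equivariant Thom spaces identifies the source of the map in question with $\Jmor{W}{U \oplus \mathbb{R}^\alpha}{V} \wedge S^{W \otimes \mathbb{R}^\alpha}$.

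Finally, I would apply the standard Thom-space cofibre sequence for a $\C$-equivariant sphere bundle: since $\pi = S(E_\alpha)$, the cofibre of the induced map $\mathrm{Th}(\pi^* \gamma_W(U, V)) \to \mathrm{Th}(\gamma_W(U, V))$ is $\C$-homeomorphic to $\mathrm{Th}(E_\alpha \oplus \gamma_W(U, V))$, and a fibrewise check then identifies this latter Thom space with $\Jmor{W \oplus \mathbb{R}^\alpha}{U}{V}$. The main obstacle is precisely this last identification: one must verify that the $\C$-action on $E_\alpha$ (governed by the $\mathbb{R}^\alpha$-isotypic structure on the complement) combines with that on $\gamma_W(U, V)$ so as to reproduce exactly the $\C$-structure of $\gamma_{W \oplus \mathbb{R}^\alpha}(U, V)$. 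This is a representation-theoretic computation on fibres and is where the equivariant argument most substantially departs from its non-equivariant analogue; all other steps are $\C$-equivariant refinements of the classical arguments.
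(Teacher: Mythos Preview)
Your proposal rests on a misreading of the objects involved. The morphism spaces $\Jmor{W}{U}{V}$ are \emph{not} Thom spaces of bundles over the space $L_{\C}(U,V)$ of $\C$-equivariant linear isometries. By the paper's Definition~\ref{def: gamma pq bundle}, the base is the full space $\LL(U,V)$ of \emph{all} linear isometries (not necessarily $\C$-equivariant), and the $\C$-action is by conjugation, $\sigma * f = \sigma f \sigma$. Consequently, over a general $f\in\LL(U,V)$ the image $f(U)$ need not be a $\C$-subrepresentation of $V$, so there is no ``$\mathbb{R}^\alpha$-isotypic component of $f(U)^\perp$'' and your description of the restriction map $\pi$ as the sphere bundle of such an isotypic subbundle $E_\alpha$ does not make sense. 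The fibre of $\pi\colon \LL(U\oplus\mathbb{R}^\alpha,V)\to\LL(U,V)$ over $f$ is the whole unit sphere $S(f(U)^\perp)$; what changes with $\alpha$ is not the fibre but the $\C$-action on the total space, since $(\sigma*g)(e_\alpha)=\sigma g(\sigma e_\alpha)=\pm\sigma g(e_\alpha)$ according as $\mathbb{R}^\alpha$ is trivial or sign.

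Once the setup is corrected, the argument you outline is essentially the paper's: run Weiss's non-equivariant proof verbatim (restriction is a sphere bundle, the pullback of $\gamma_W$ splits off a trivial summand, apply the Thom-space cofibre sequence for a sphere bundle), and then verify that each map used is $\C$-equivariant for the conjugation action. The paper records exactly this, citing \cite[Proposition 2.6]{Tag22unit} for the equivariance check. The ``representation-theoretic computation on fibres'' you anticipate for the final identification is then just the observation that the trivial line summand of $\pi^*\gamma_W$ carries the $\C$-action of $W\otimes\mathbb{R}^\alpha$, so that tensoring the tautological complement bundle by $W\oplus\mathbb{R}^\alpha$ gives $\gamma_{W\oplus\mathbb{R}^\alpha}(U,V)$ on the nose.
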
  

It is very difficult to produce variations of orthogonal calculus due to the nature of its construction. Two successful variations are the unitary calculus and calculus with reality constructed by Taggart in \cite{Tag22unit, Tag22real}. In these calculi, real vector spaces are replaced by complex vector spaces, and in the calculus with reality one takes into consideration the $\C$-action on complex vector spaces given by complex conjugation. The fibres of the towers produced are classified by spectra with an action of the unitary group $U(n)$ for unitary calculus and spectra with an action of $\C\ltimes U(n)$ for calculus with reality. Taggart's calculus with reality provides a great insight of what can be expected from a genuine $C_2$-equivariant orthogonal calculus, and a number of proofs in this paper were inspired by the extensions of Taggart.  

The main difficulty in generalising the $\C$-equivariant orthogonal calculus to $G$-equivariant orthogonal calculus, for an arbitrary group $G$, is the cofibre sequence of Proposition \ref{prop D}. The cofibre sequence holds when $\mathbb{R}^\alpha$ is either the trivial or sign representation of $\C$, since they are the one-dimensional irreducible $\C$-representations. Replacing $\mathbb{R}^\alpha$ with a representation of dimension greater than one would require an iterated cofibre sequence. In particular, this indicates that replicating this type of cofibre sequence for a general group $G$ could be difficult, since $G$ might have irreducible representations with dimension greater than one. 

Work of Bhattacharya and Hu \cite{BH2024} produces analogous results for the case of a general finite group, but still requires that their derivatives be taken along one-dimensional representations (see \cite[Theorem 4.1]{BH2024}). 

This difficulty can be avoided if one restricts to abelian groups and the complex setting, since every irreducible representation of a finite abelian group over $\mathbb{C}$ is one-dimensional. Therefore, it should be possible to construct a $G$-equivariant unitary calculus, for $G$ a finite abelian group. It might be possible to then recover $G$-equivariant orthogonal calculus via a complexification functor, similar to that used by Taggart \cite{Tag21}, however one should be careful to check that this preserves the $G$-equivariance.

\subsection*{Notation and conventions}

Throughout this paper we will use $\TTop_*$ to denote the category of pointed compact generated weak Hausdorff spaces with the Quillen model structure. We denote the category of pointed $\C$-spaces and $\C$-equivariant maps by $\C\TTop_*$. We endow $\C\TTop_*$ with the fine model structure, which is cofibrantly generated by the sets of generating cofibrations and acyclic cofibrations $I_{\C}$ and $J_{\C}$ respectively.

We denote the trivial element of $\C$ by $e$ and the non-trivial element by $\sigma$. The one-dimensional trivial and sign $\C$-representations are denoted by $\R$ and $\Rdelta$ respectively. We assume all $\C$-representations are equipped with an inner product that respects the $\C$-action, and use the notation $\R^{p,q}$ to denote the representation $\R^p\oplus\R^{q\delta}$. For a $\C$-representation $V$, we denote the tensor product $\R^{p,q}\otimes V$ by $(p,q)V$. Finally, we denote the semi-direct product of $\C$ with $O(p,q)$ by $O(p,q) \rtimes \C$.

\subsection*{Acknowledgements}

This paper is a condensed version of the authors Ph.D. thesis \cite{Yav24}. The author would like to express sincere gratitude to their supervisor, David Barnes, whose support, guidance and expertise were instrumental in the completion of this thesis project. The author also thanks Niall Taggart, whose knowledge, advice and time have been invaluable.

\section{Equivariant functor categories}\label{sec: equiv functor cats}
The primary objects of study in calculus of real functions are derivatives. In orthogonal calculus, one constructs derivatives of input functors via combinations of restriction functors and the inflation-orbit change-of-group functors for spaces. These derivatives play a key role in the classification of $n$-homogeneous functors, as they form part of the zig-zag of Quillen Equivalences used to derive the classification Theorem. We extend this notion to the $\C$-equivariant setting by defining new functor categories and adjunctions analogous to those used in the underlying calculus. We begin by choosing a new indexing category that will induce the $\C$-actions used throughout the calculus. 

\subsection{The input functors}\label{sect: input functors}
Orthogonal calculus is indexed on the universe $\mathbb{R}^\infty$, which makes the input category of functors enriched over topological spaces. To guarantee that the category of input functors for $\C$-equivariant orthogonal calculus is enriched over pointed $\C$-spaces, we must specify a new universe which is closed under $\C$-action. 

The regular representation of $\C=\{e,\sigma\}$ is defined as the following vector space\begin{equation*}
    \RC=\{\lambda_1 \underline{e}+\lambda_2 \underline{\sigma}:\lambda_1,\lambda_2\in\R\}
\end{equation*}
with basis elements $\underline{e}, \underline{\sigma}$. To better understand the $\C$-action on the vector space $\RC$, we can decompose \begin{equation*}\RC=\R\langle\underline{e}+\underline{\sigma}\rangle\oplus\R\langle\underline{e}-\underline{\sigma}\rangle.\end{equation*}This direct sum is $\C$-isomorphic to $\R\oplus \Rdelta$, where $\R$ and $\Rdelta$ are the trivial and sign $\C$-representations respectively with $\C$-actions defined below
\begin{align*}
    \sigma(x)&=x\quad (x\in\R),\\
    \sigma(y)&=-y\quad (y\in\Rdelta).
\end{align*}

Choosing the universe $\bigoplus\limits_{i=1}^{\infty} \RC$, we define a new indexing category as follows. 
\begin{definition}\index{$\CL$}\index{$\LL(U,V)$}
The \emph{equivariant indexing category} $\CL$ is a $\CTop_*$-enriched category whose objects are the finite dimensional subrepresentations of $\bigoplus\limits_{i=1}^{\infty} \RC$ with inner product. Let $\LL(U,V)$ denote the space of (not necessarily $\C$-equivariant) linear isometries with the $\C$-action that is conjugation. That is, for $f\in \LL(U,V)$, $e*f=f$ and $\sigma * f = \sigma f \sigma : U\rightarrow V$. The hom-object of morphisms $U\rightarrow V$ is the pointed $\C$-space $\LL(U,V)_+$. 
\end{definition}

From the discussion above on the decomposition of $\RC$, we can see that an object in the category $\CL$ is isomorphic to an object of the form $\R^{p+q\delta}=\R^p\oplus \R^{q\delta}$, for some $p,q\in \mathbb{N}$. That is, $p$ copies of the trivial representation and $q$ copies of the sign representation.  

\begin{rem}\index{$\R^{p,q}$}\index{$(p,q)V$}
We will use the notation $\R^{p,q}$ to mean $\R^{p+q\delta}$. We will also use the notation $(p,q)V$ to mean $\R^{p,q}\otimes V$ equipped with the diagonal action of $\C$, where $V\in\CL$.
\end{rem}

We can now define the input category for $\C$-orthogonal calculus, which we denote by $\Ezero$. 
\begin{definition}\label{jzero and ezero def}\index{$\Jzero$}
Define $\Jzero$ to be the $\CTop_*$-enriched category with the same objects as $\CL$ and morphisms defined by $\Jzero(U,V)=\LL(U,V)_+$. 
Define the \emph{input category} $\Ezero$ to be the category of $\CTop_*$-enriched functors from $\Jzero$ to $\CTop_*$ and $\C$-equivariant natural transformations (see Remark \ref{remNat} for details). 
\end{definition}

This category inherits a projective model structure from the fine model structure on $\C\TTop_*$. The projective model structure on $\Ezero$ is a special case of the projective model structure on $\OEpq$ for $p=q=0$, as such we defer the proof to Lemma \ref{objectwise model structure proof}.
\begin{prop}\label{proj model structure}\index{$\Ezero$}
There is a proper, cellular, $\C$-topological model structure on $\Ezero$ where the fibrations and weak equivalences are defined objectwise from the fine model structure on $\CTop_*$. We call them objectwise fibrations and objectwise weak equivalences. We call this the projective model structure on $\Ezero$ and denote it by $\Ezero$. It is cofibrantly generated by the following sets of generating cofibrations and generating acyclic cofibrations respectively 
\begin{align*}
    &I_{\proj}=\{\Jzero (V,-)\wedge i : i\in I_{\C}\}\\
    &J_{\proj}=\{\Jzero (V,-)\wedge j : j\in J_{\C}\},
\end{align*}
where $V\in \Jzero$ and $I_{\C},J_{\C}$ are the generating cofibrations and acyclic cofibrations of the fine model structure on $\CTop_*$. 
\end{prop}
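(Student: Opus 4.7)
The plan is to obtain the model structure by transferring the fine model structure on $\C\TTop_*$ along the free--forgetful adjunction between $\Ezero$ and a product of copies of $\C\TTop_*$ indexed by the objects of $\Jzero$. The key input is the enriched Yoneda lemma: for every $V \in \Jzero$, pointed $\C$-space $A$, and $F \in \Ezero$, there is a natural $\C$-equivariant isomorphism
\[
\Nat\bigl(\Jzero(V,-) \wedge A,\, F\bigr) \cong \C\TTop_*\bigl(A,\, F(V)\bigr),
\]
so that evaluation $\Ev_V$ is right adjoint to $\Jzero(V,-) \wedge -$. The proposed sets $I_{\proj}$ and $J_{\proj}$ are then precisely the images under the free functor of the generating sets $I_{\C}$ and $J_{\C}$ across representing objects $V$.

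I would then appeal to Kan's recognition theorem for cofibrantly generated model structures, which reduces the task to two conditions. First, smallness of the domains of $I_{\proj}$ and $J_{\proj}$ follows because $\Ev_V$ preserves colimits (being a left adjoint) combined with smallness of the domains of $I_{\C}$ and $J_{\C}$ in $\C\TTop_*$. Second, one must show that every relative $J_{\proj}$-cell complex is an objectwise weak equivalence. Evaluating at any $W \in \Jzero$ turns such a complex into a transfinite composite of pushouts of coproducts of maps of the form $\Jzero(V,W)_+ \wedge j$ with $j \in J_{\C}$, so the acyclicity condition reduces to verifying that smashing any pointed $\C$-space with a map in $J_{\C}$ yields an acyclic cofibration in the fine model structure.

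The remaining properties follow routinely. Properness transfers objectwise: pullbacks, pushouts, fibrations, and weak equivalences in $\Ezero$ are all computed or detected objectwise, and the fine model structure on $\C\TTop_*$ is proper. Cellularity reduces pointwise to the cellularity of $\C\TTop_*$: compactness of the domains of $I_{\proj}$, smallness of cells relative to cofibrations, and the fact that cofibrations are effective monomorphisms can each be tested by applying $\Ev_V$. Finally, the $\C$-topological structure on $\Ezero$ is obtained by defining enriched tensors and cotensors objectwise, and the enriched pushout-product axiom lifts from the corresponding axiom on $\C\TTop_*$.

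The principal obstacle is the enriched pushout-product/acyclicity step: one needs to know that the fine model structure on $\C\TTop_*$ is genuinely a $\C$-topological model category, so that smashing with an arbitrary pointed $\C$-space preserves cofibrations and acyclic cofibrations. This single input powers both the verification of Kan's acyclicity hypothesis and the enriched model-categorical structure on $\Ezero$, and is the place where the fine (as opposed to coarse) structure on $\C\TTop_*$ genuinely matters; the rest of the argument is a mechanical transfer that will be recorded in detail in the more general Lemma~\ref{objectwise model structure proof}.
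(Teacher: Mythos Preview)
Your approach is essentially the paper's: both treat this as the special case $p=q=0$ of the general projective model structure on $\OEpq$ and defer to Lemma~\ref{objectwise model structure proof}, and your outline of the transfer argument via Kan's recognition theorem is the standard way to establish that lemma.

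One slip worth correcting: the claim that being a $\C$-topological model category implies ``smashing with an \emph{arbitrary} pointed $\C$-space preserves cofibrations and acyclic cofibrations'' is false as stated. The pushout-product axiom only gives this for smashing with a \emph{cofibrant} space. Your acyclicity check still goes through, but for a different reason: the maps in $J_{\C}$ are inclusions of strong deformation retracts, so smashing with any pointed space yields an $h$-cofibration which is also a homotopy equivalence, and such maps are stable under pushout and transfinite composition. Alternatively, one can invoke Illman's theorem to see that each $\Jzero(V,W)=\LL(V,W)_+$ is a $\C$-CW complex and hence cofibrant, after which the pushout-product axiom applies directly.
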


\subsection{The intermediate categories}

In orthogonal calculus, one constructs intermediate categories $O(n)\mathcal{E}_n$ which are intermediate between the input category and the category of orthogonal spectra with an action of $O(n)$. We replicate this process in the $\C$-equivariant setting to construct new intermediate categories $\OEpq$. The construction follows the orthogonal calculus version of Weiss \cite[Sections 1 and 2]{Wei95} and Barnes and Oman \cite[Sections 3 and 8]{BO13}.

We begin by defining the following $\C$-equivariant vector bundle.
\begin{definition}\label{def: gamma pq bundle}\index{$\Gmor{p,q}{U}{V}$}
Let $U,V\in \CL$. Define the \emph{$(p,q)$-th complement bundle} $\Gmor{p,q}{U}{V}$ to be the $\C$-equivariant vector bundle on $\LL(U,V)$, whose total space is given by
\begin{equation*}
    \Gmor{p,q}{U}{V}=\{(f,x):f\in \LL(U,V), x\in \R^{p,q} \otimes f(U)^\perp\},
\end{equation*}
where $f(U)^\perp$ denotes the orthogonal complement of the image of $f$.

\noindent Let $(f,x)\in \Gmor{p,q}{U}{V} $. Define a $\C$-action on $\Gmor{p,q}{U}{V} $ by \begin{equation*}
\sigma(f,x)=(\sigma *f, \sigma x).
\end{equation*}
\end{definition}

The following result outlines the effect of the fixed point functor $(-)^{\C}:\CTop_*\rightarrow\TTop_*$ on the $C_2$-spaces $\LL(\mathbb{R}^{a,b},\mathbb{R}^{c,d})$ and $C_2 \gamma_{p,q}(\mathbb{R}^{a,b},\mathbb{R}^{c,d})$. 

\begin{thm}[The Equivariant Splitting Theorems]\label{splittingtheorems}There are homeomorphisms
\begin{equation*}
\LL(\mathbb{R}^{a,b},\mathbb{R}^{c,d})^{C_2}\cong \LL(\mathbb{R}^{a},\mathbb{R}^{c})\times \LL(\mathbb{R}^{b\delta},\mathbb{R}^{d\delta})
\end{equation*}
\begin{equation*}
C_2\gamma_{p,q}(\mathbb{R}^{a,b},\mathbb{R}^{c,d})^{C_2}\cong \C\gamma_{p,0}(\mathbb{R}^{a},\mathbb{R}^{c})\times \C\gamma_{0,q}(\mathbb{R}^{b\delta},\mathbb{R}^{d\delta}).
\end{equation*}
\end{thm}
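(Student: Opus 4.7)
The plan is to prove both homeomorphisms by unwinding the $C_2$-actions and using the fact that an equivariant linear map between $C_2$-representations must preserve isotypic components.

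For the linear isometries, I would first observe that under the conjugation action $\sigma * f = \sigma f \sigma$, a map $f \in \mathcal{L}(\mathbb{R}^{a,b}, \mathbb{R}^{c,d})$ is a $C_2$-fixed point if and only if $\sigma f = f \sigma$, i.e.\ $f$ is $C_2$-equivariant. The trivial isotypic component $\mathbb{R}^a \subseteq \mathbb{R}^{a,b}$ coincides with the $+1$-eigenspace of $\sigma$ and the sign isotypic component $\mathbb{R}^{b\delta}$ with the $-1$-eigenspace, and similarly in the target; an equivariant isometry $f$ must therefore decompose uniquely as $f = f_1 \oplus f_2$ with $f_1 \in \mathcal{L}(\mathbb{R}^a, \mathbb{R}^c)$ and $f_2 \in \mathcal{L}(\mathbb{R}^{b\delta}, \mathbb{R}^{d\delta})$. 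The assignment $f \mapsto (f_1, f_2)$ is a continuous bijection with continuous inverse $(f_1, f_2) \mapsto f_1 \oplus f_2$, producing the first homeomorphism.

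For the bundle version, I would unpack the $C_2$-action on the total space: a pair $(f, x)$ is a $C_2$-fixed point iff $f$ is $C_2$-equivariant (as above) and $x$ lies in the fixed-point subspace of $\mathbb{R}^{p,q} \otimes f(\mathbb{R}^{a,b})^\perp$. Using the decomposition $f = f_1 \oplus f_2$, the orthogonal complement splits equivariantly as $f(\mathbb{R}^{a,b})^\perp = f_1(\mathbb{R}^a)^\perp \oplus f_2(\mathbb{R}^{b\delta})^\perp$, where the first summand carries the trivial and the second the sign representation. Distributing $\mathbb{R}^{p,q} = \mathbb{R}^p \oplus \mathbb{R}^{q\delta}$ across this decomposition yields four tensor summands of $C_2$-types trivial$\otimes$trivial, trivial$\otimes$sign, sign$\otimes$trivial, and sign$\otimes$sign; the fixed points are exactly the two summands of trivial type, namely $\mathbb{R}^p \otimes f_1(\mathbb{R}^a)^\perp$ and $\mathbb{R}^{q\delta} \otimes f_2(\mathbb{R}^{b\delta})^\perp$. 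By inspection these are precisely the fibers of $C_2 \gamma_{p,0}(\mathbb{R}^a, \mathbb{R}^c)$ over $f_1$ and of $C_2 \gamma_{0,q}(\mathbb{R}^{b\delta}, \mathbb{R}^{d\delta})$ over $f_2$. Both of the latter bundles are, in fact, trivial $C_2$-spaces (their bases carry trivial actions because conjugation by $\pm\mathrm{Id}$ is the identity, and their fibres are $C_2$-trivial for the same tensor-product reasons), which is consistent with the absence of a fixed-point operator on the right-hand side. Combining the fibrewise identification with the base splitting from the first part yields the second homeomorphism.

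The main obstacle will not be conceptual but will be verifying that the fibrewise pointwise identifications assemble into a global homeomorphism rather than a mere set-theoretic bijection. This amounts to checking that $f \mapsto (f_1, f_2)$ and the induced splitting of $f(\mathbb{R}^{a,b})^\perp$ depend continuously on $f$, which follows from continuity of orthogonal projection onto the $\pm 1$-eigenspaces of $\sigma$ and continuity of orthogonal complements in the Stiefel-type topology on $\mathcal{L}(U,V)$. Once this continuity is established, both homeomorphisms follow by direct inspection.
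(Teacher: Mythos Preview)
Your proposal is correct and follows essentially the same route as the paper: the paper dispatches the first homeomorphism by citing Schur's Lemma (which is exactly your eigenspace/isotypic-component argument), and for the second it performs the same four-summand tensor decomposition of $\mathbb{R}^{p,q}\otimes f(U)^\perp$ and picks out the two $C_2$-trivial pieces. Your added remarks on continuity and on the triviality of the $C_2$-action on the right-hand factors are accurate and slightly more explicit than the paper, but do not constitute a different approach.
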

\begin{proof}
The first result is an application of Schur's Lemma.

Let $(f,x)\in\Gmor{p,q}{\mathbb{R}^{a,b}}{\mathbb{R}^{c,d}}$. Then $(f,x)$ is $\C$-fixed if and only if $(\sigma *f,\sigma x)=(f,x)$. By the first splitting theorem, $f$ must be of the form $f_1\times f_2$, where $f_1\in\LL(\mathbb{R}^{a},\mathbb{R}^{c})$ and $f_2\in \LL(\mathbb{R}^{b\delta},\mathbb{R}^{d\delta})$. 

We know that $x\in \R^{p,q}\otimes \text{Im}(f)^\perp\subset \R^{p,q}\otimes \R^{c,d}=(\R^p\otimes\R^c)\oplus(\R^p\otimes\R^{d\delta})\oplus(\R^{q\delta}\otimes\R^{c})\oplus(\R^{q\delta}\otimes\R^{d\delta})$, where $\C$ acts as $\id\oplus -1\oplus -1\oplus\id$. Therefore $\sigma(x)=x$ if and only if $$x\in (\R^p\otimes\R^c)\oplus(\R^{q\delta}\otimes\R^{d\delta}).$$ That is, $x$ is of the form $x_1\oplus x_2$, where $x_1\in \R^p\otimes \text{Im}(f_1)^\perp$ and $x_2\in\R^{q\delta}\otimes\text{Im}(f_2)^\perp$. The second splitting theorem follows from the well defined homeomorphism 
\begin{equation*}
    (f_1\times f_2,x_1\oplus x_2)\mapsto((f_1,x_1),(f_2,x_2)).\qedhere
\end{equation*}\end{proof}

Now we define what will become the morphism spaces for the categories $\Jpq$. These categories are analogous to the $n$-th jet categories of orthogonal calculus, and will be used to build the intermediate categories. 

\begin{definition}\label{thom}\index{$\Jmor{p,q}{U}{V}$}
Let $U,V \in \CL$. Define $\Jmor{p,q}{U}{V}$ to be the Thom space of $\Gmor{p,q}{U}{V}$.
\end{definition}

This Thom space is the one point compactification of $\Gmor{p,q}{U}{V}$, since $\LL(U,V)$ is compact. Hence, each $\Jmor{p,q}{U}{V}$ is a pointed $\C$-space, with $\C$-action inherited from $\Gmor{p,q}{U}{V}$. 

We can now define the categories $\Jpq$. 
\begin{definition}\label{def: p,q-jet cat}\index{$\Jpq$}
Let the \emph{$(p,q)$-th jet category} $\Jpq$ be the $\CTop$-enriched category whose objects are the same as $\CL$, and whose morphisms are given by $\Jmor{p,q}{U}{V}$. 

Composition in $\Jpq$ is defined as follows. There are maps of $\C$-spaces defined by
\begin{align*}
\Gmor{p,q}{V}{X}\times \Gmor{p,q}{U}{V}&\rightarrow \Gmor{p,q}{U}{X}\\
((f,x),(g,y))&\mapsto (f\circ g,x+(\id\otimes f)(y)).
\end{align*}Passing to Thom spaces then yields the desired composition maps
\begin{equation*}
    \Jmor{p,q}{V}{X}\wedge \Jmor{p,q}{U}{V}\rightarrow \Jmor{p,q}{U}{X}.
 \end{equation*}
One can check that this composition is continuous, unital, associative, and $\C$-equivariant.
\end{definition}

One can see that the category $\Jzero$ has morphisms $\Jmor{0,0}{U}{V}=\LL(U,V)_+$, and therefore is exactly the category defined in Definition \ref{jzero and ezero def}.

The following is the a $\C$-equivariant generalisation of \cite[Theorem 1.2]{Wei95}. It demonstrates that it is possible to build the morphism spaces $\Jmor{p,q}{U}{V}$ inductively. That is, we can construct $C_2\mathcal{J}_{p+1,q}$ and $C_2\mathcal{J}_{p,q+1}$ from $\Jpq$. The proof is analogous to that of the underlying Theorem, and checking equivariance follows as in \cite[Proposition 2.6]{Tag22unit}.

\begin{prop}\label{cofibseq}
For all $U,V,W$ in $\Jzero$, the homotopy cofibre of the restricted composition map
\begin{equation*}
\Jmor{W}{U\oplus \mathbb{R}^\alpha}{V}\wedge S^{W\otimes \mathbb{R}^\alpha} \rightarrow \Jmor{W}{U}{V}
\end{equation*}
is $\C$-equivariantly homeomorphic to $\Jmor{W\oplus \mathbb{R}^\alpha}{U}{V}$, where $\mathbb{R}^\alpha$ is either the trivial or sign representation of $\C$, and $S^{W\otimes \mathbb{R}^\alpha}$ has been $\C$-equivariantly identified with the closure of the subspace of pairs $(i,x)$ in $\Jmor{W}{U}{U\oplus \mathbb{R}^\alpha}$ with $i$ the standard inclusion.
\end{prop}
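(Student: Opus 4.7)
The plan is to follow Weiss's proof of Theorem 1.2 in \cite{Wei95} at the level of underlying spaces, then verify $\C$-equivariance of each step in the spirit of Taggart \cite[Proposition 2.6]{Tag22unit}.

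First, I would reorganise everything as Thom spaces of $\C$-vector bundles over $\LL(U,V)$. Let $\zeta(U,V)$ denote the $\C$-bundle with fibre $g(U)^\perp$ over $g$, so that $\gamma_W(U,V) = W \otimes \zeta(U,V)$ and $\gamma_{W\oplus \R^\alpha}(U,V) = \gamma_W(U,V) \oplus (\R^\alpha \otimes \zeta(U,V))$. The key geometric input is the identification $\LL(U\oplus \R^\alpha, V) \cong S(\R^\alpha \otimes \zeta(U,V))$ of the restriction fibration with a unit sphere bundle, given by $f \mapsto (f|_U, f(1_\alpha))$ for a chosen unit vector $1_\alpha \in \R^\alpha$. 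The computation $(\sigma f)(1_\alpha) = \sigma(f(\sigma \cdot 1_\alpha)) = \pm \sigma(f(1_\alpha))$ shows that the sign arising when $\alpha = \delta$ is precisely absorbed by the $\R^\alpha$-factor on the right, making this identification $\C$-equivariant.

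Next, I would transport the composition map through this identification. Pulling back along $\pi : \LL(U\oplus \R^\alpha, V) \to \LL(U,V)$ and using the equivariant splitting $f(U)^\perp = f(\R^\alpha) \oplus f(U\oplus \R^\alpha)^\perp$ gives $\pi^*\zeta(U,V) = \lambda \oplus \zeta(U\oplus \R^\alpha, V)$, where $\lambda$ is the tautological line bundle and $\lambda \cong \R^\alpha$ $\C$-equivariantly via $1_\alpha \mapsto f(1_\alpha)$. Tensoring with $W$ then yields
$$\pi^*\gamma_W(U,V) = (W\otimes \R^\alpha) \oplus \gamma_W(U\oplus \R^\alpha, V).$$
A direct check of the formula $(f,x,y) \mapsto (f|_U,\, x + (\id_W \otimes f)(y))$ identifies the composition map of the proposition with the Thom space of the canonical bundle map $\pi^*\gamma_W(U,V) \to \gamma_W(U,V)$ covering $\pi$. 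Its cofibre, by the standard fact that the cofibre of $\text{Th}(\pi^*\eta) \to \text{Th}(\eta)$ for a sphere bundle $\pi: S(\xi)\to B$ and a bundle $\eta$ is $\text{Th}(\eta \oplus \xi)$, equals $\text{Th}(\gamma_W(U,V) \oplus \R^\alpha \otimes \zeta(U,V)) = \text{Th}(\gamma_{W\oplus \R^\alpha}(U,V)) = \Jmor{W\oplus \R^\alpha}{U}{V}$, as required.

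The main obstacle will be tracking the sign twist coming from the sign representation consistently through all bundle-level identifications: one must verify that the equivariant sphere bundle is $S(\R^\alpha \otimes \zeta(U,V))$ rather than $S(\zeta(U,V))$, and that this same twist reappears as the summand $\R^\alpha \otimes \zeta(U,V)$ of $\gamma_{W\oplus \R^\alpha}(U,V)$ on the cofibre side. The one-dimensionality of $\R^\alpha$ is essential here, as noted in the introduction: it is precisely what realises $\LL(U\oplus \R^\alpha, V)$ as a single sphere bundle over $\LL(U,V)$, so that Weiss's argument yields a single cofibre sequence rather than an iterated one.
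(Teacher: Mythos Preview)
Your proposal is correct and follows exactly the approach the paper indicates: the paper's own proof consists solely of the sentence ``The proof is analogous to that of the underlying Theorem, and checking equivariance follows as in \cite[Proposition 2.6]{Tag22unit},'' and your sketch simply unpacks precisely those two references. Your careful tracking of the sign twist---identifying the sphere bundle as $S(\R^\alpha\otimes\zeta(U,V))$ rather than $S(\zeta(U,V))$ and matching it to the extra summand $\R^\alpha\otimes\zeta(U,V)$ in $\gamma_{W\oplus\R^\alpha}(U,V)$---is the essential equivariance check, and you have it right.
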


\begin{rem}\label{rem: cofib seq}   
These cofibre sequences are analogous to the following cofibre sequences constructed in the underlying calculus (see \cite[Proposition 1.2]{Wei95}).  
\begin{equation*}
    \mathcal{J}_{n}(U\oplus \mathbb{R},V)\wedge S^{\mathbb{R}^n} \rightarrow \mathcal{J}_{n}(U,V)\rightarrow \mathcal{J}_{n+1} (U,V)
\end{equation*}

If one wanted to replace $\R$ with something higher dimensional this would involve ‘gluing’ these cofibre sequences together in an iterative manner. The $\C$-equivariant calculus has been constructed such that it reduces down to the underlying calculus after forgetting the $\C$-actions. This forces that the cofibre sequences in Proposition \ref{cofibseq} only hold when $\mathbb{R}^\alpha$ is either the trivial or sign $\C$-representation, since these cases both correspond $\R$ in the non-equivariant statement. To replace $\mathbb{R}^\alpha$ with something of higher dimension, for example $\R^{1+1\delta}$, would again mean taking some kind of iteration of cofibre sequences. This indicates that a potentially more involved approach may be needed if one wants to construct this kind of result in a $G$-equivariant orthogonal calculus, for an arbitrary group $G$. As a result, it is also not obvious how derivatives should behave for the arbitrary $G$ setting, since the fibre sequences that describe derivatives (see Proposition \ref{loops fibre sequence}) are a direct consequence of these cofibre sequences. 
\end{rem}

We now define the functor categories $\Epq$. At the same time, we will also define functor categories $\OEpq$, which will later be used to classify the layers of the orthogonal tower. Before we can do this, we introduce the group $O(p,q)$ and discuss its actions. 
\begin{definition}\label{def:O(p,q) and matrix A}\index{$O(p,q)$}\index{$O(p,q)\rtimes\C$}
Define $O(p,q)$ to be the group of linear isometries from $\mathbb{R}^{p,q}$ to $\mathbb{R}^{p,q}$ with the conjugation $\C$-action. In particular, $O(p,q)^{\C}=O(p)\times O(q)$. 
\end{definition}

There is an action of $O(p,q)\rtimes \C$ on $\mathbb{R}^{p,q}$ given by $(T,\sigma)(x):= T(\sigma (x))$. This can be extended to an action on $\mathbb{R}^{p,q} \otimes f(U)^\perp$ by $(T,\sigma)x:=((T,\sigma) \otimes \sigma) (x)$, where $f\in \LL(U,V)$. This induces an $O(p,q)\rtimes \C$-action on $\Gmor{p,q}{U}{V}$ by $$(T,\sigma)(f,x):=(\sigma *f,((T,\sigma) \otimes\sigma)(x)).$$ Hence, there is also an action of $O(p,q)\rtimes \C$ on its Thom space $\Jmor{p,q}{U}{V}$, making $\Jpq$ an $\COTop_* $-enriched category. 

\begin{rem}\label{rem: what is the ms on semi direct prod}
Note that the actions of $\C$ and $O(p,q)$ do not commute. Throughout this paper, we equip the category of $\C$-spaces with the fine model structure and the category of $O(p,q)$-spaces with the coarse model structure. We then equip the category of $(O(p,q)\rtimes \C)$-spaces with a model structure which is fine with respect to $\C$ and coarse with respect to $O(p,q)$. This is discussed more in Remark \ref{remark: model structure on semi direct}. 
\end{rem}

\begin{prop}\label{sphere as quotient of orthogonal groups prop}
For all $p>0$ and $q\geq 0$, there exists a $\C$-equivariant homeomorphism 
\begin{equation*}
    O(p,q)/O(p-1,q)\cong S(\mathbb{R}^{p+q\delta}).
\end{equation*}
\end{prop}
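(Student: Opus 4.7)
The plan is to realise $S(\mathbb{R}^{p+q\delta})$ as the orbit of a distinguished vector under the transitive action of $O(p,q)$ on the unit sphere of $\mathbb{R}^{p,q}$, then check that all the structure is $C_2$-equivariant. Since $p>0$, we may choose a unit vector $e_1$ in the trivial summand $\mathbb{R}^p\subseteq \mathbb{R}^{p,q}$. The map I would study is
\begin{equation*}
\varphi : O(p,q)\longrightarrow S(\mathbb{R}^{p+q\delta}), \qquad T\longmapsto T(e_1),
\end{equation*}
which is clearly continuous.

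First I would observe that $\varphi$ is surjective: forgetting the $C_2$-action, $O(p,q)$ is just $O(p+q)$ acting on $\mathbb{R}^{p+q}$ in the standard way, and this action is transitive on the unit sphere. Next I would identify the fibre over $e_1$: $T(e_1)=e_1$ if and only if $T$ preserves $e_1$ and hence also its orthogonal complement $e_1^\perp\cong \mathbb{R}^{p-1}\oplus\mathbb{R}^{q\delta}$, so the stabiliser of $e_1$ is exactly $O(p-1,q)$, embedded in $O(p,q)$ in the obvious way. Consequently $\varphi$ factors through a continuous bijection
\begin{equation*}
\overline{\varphi}: O(p,q)/O(p-1,q)\longrightarrow S(\mathbb{R}^{p+q\delta}).
\end{equation*}
Since $O(p,q)$ is compact and $S(\mathbb{R}^{p+q\delta})$ is Hausdorff, a standard compactness argument upgrades $\overline{\varphi}$ to a homeomorphism.

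It remains to check that both sides carry compatible $C_2$-actions and that $\overline{\varphi}$ respects them. For this I would first verify that $O(p-1,q)$ is stable under the conjugation action: if $T\in O(p-1,q)$ then $\sigma T\sigma$ fixes $e_1$ (because $\sigma(e_1)=e_1$ as $e_1$ lies in the trivial summand), so $\sigma T\sigma\in O(p-1,q)$. Therefore the quotient $O(p,q)/O(p-1,q)$ inherits a well-defined $C_2$-action by $\sigma\cdot[T]=[\sigma T\sigma]$. Equivariance of $\overline{\varphi}$ is then immediate:
\begin{equation*}
\overline{\varphi}(\sigma\cdot[T]) = (\sigma T\sigma)(e_1) = \sigma T(\sigma e_1) = \sigma T(e_1) = \sigma\cdot \overline{\varphi}([T]).
\end{equation*}

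The only non-routine step is the compatibility check in the last paragraph, and even this reduces to the observation that $e_1$ is $C_2$-fixed because it was chosen in the trivial summand; this is precisely what the hypothesis $p>0$ buys us (and incidentally explains why the analogous statement relating $O(p,q)/O(p,q-1)$ to the sign sphere would require $q>0$ instead). No further technical obstacles are expected.
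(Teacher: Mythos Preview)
Your proof is correct and follows essentially the same approach as the paper: both choose the $C_2$-fixed unit vector $e_1$ in the trivial summand, identify the stabiliser as $O(p-1,q)$, and apply orbit--stabiliser to obtain the homeomorphism, checking $C_2$-equivariance via $\sigma(e_1)=e_1$. Your version is slightly more explicit in verifying that $O(p-1,q)$ is $C_2$-stable and in invoking compactness to upgrade the bijection to a homeomorphism, but the argument is the same.
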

\begin{proof}

Since $O(p,q)$ acts on $\mathbb{R}^{p+q\delta}$ transitively by linear isometries, there is a restricted transitive action of $O(p,q)$ on $S(\mathbb{R}^{p+q\delta})$. Fix the vector $e_1=(1,0,...,0)$ in $S(\mathbb{R}^{p+q\delta})$. There is a continuous $\C$-equivariant map $\phi: O(p,q)\rightarrow S(\mathbb{R}^{p+q\delta})$ given by $g\mapsto ge_1$. The stabiliser of $e_1$ is the subgroup of $O(p,q)$ given by 
\begin{equation*}
    \biggl\{ \begin{pmatrix} 1 & 0\\0&A\end{pmatrix}:A\in O(p-1,q) \biggr\}, 
\end{equation*}
which is $\C$-homeomorphic to $O(p-1,q)$. 
The quotient $O(p,q)/O(p-1,q)$ inherits a $\C$-action defined by $\sigma ([g]):= [\sigma (g)]$. It follows by the orbit-stabiliser theorem that there is a continuous $\C$-equivariant homeomorphism $O(p,q)/O(p-1,q)\cong S(\mathbb{R}^{p+q\delta})$, which is given by $[g]\mapsto ge_1$. 
\end{proof}

\begin{definition}\index{$\Epq$}\index{$\OEpq$}
Define $\Epq$ to be the category of $\CTop_*$ enriched functors from the $(p,q)$-th jet category $\Jpq$ to $\CTop_*$ and $\C$-equivariant natural transformations, denoted by $\C \Nat_{p,q}(-,-)$.

Define the \emph{$(p,q)$-th intermediate category} $\OEpq$ to be the category of $\COTop_*$-enriched functors from the $(p,q)$-th jet category $\Jpq$ to $\COTop_*$, and $(O(p,q)\rtimes\C)$-equivariant natural transformations.
\end{definition}

For $p,q=0$ this definition is exactly the category $\Ezero$ in Definition \ref{jzero and ezero def}.

\begin{rem}\label{remNat}\index{$\Nat_{p,q}(E,F)$}\index{$\C\Nat_{p,q}(E,F)$}

The set of natural transformations between $E,F\in \Epq$ is denoted by $\Nat_{p,q}(E,F)$. There is a natural topology on $\Nat_{p,q}(E,F)$, which is the subspace topology of a product space as follows. 
\begin{align*}
    \Nat_{p,q} (E,F)&:= \int\limits_{V\in\Jpq} \TTop_* (E(V),F(V))\\
    &\subseteq \prod\limits_{V\in\Jpq} \TTop_* (E(V),F(V))
\end{align*}
There is a $\C$-action on the space of natural transformations $\Nat_{p,q}(E,F)$ induced by the conjugation action on $\TTop_* (E(V),F(V))$. This defines an enrichment of $\Epq$ in $\CTop_*$. 

With respect to this conjugation action, we topologise the set of $\C$-equivariant natural transformations between $E,F\in\Epq$, denoted by $\C\Nat_{p,q}(E,F):=\Nat_{p,q}(E,F)^{\C}$ as follows. 
\begin{align*}
    \C\Nat_{p,q} (E,F)&:= \int\limits_{V\in\Jpq} \C\TTop_* (E(V),F(V))\\
    &\subseteq \prod\limits_{V\in\Jpq} \C\TTop_* (E(V),F(V))
\end{align*}

Similar descriptions exist for the morphisms in $O(p,q)\Epq$. 

We can describe a functor $E\in \Epq$ in terms of an enriched coend (and similarly for $O(p,q)\Epq$), by the Yoneda lemma (see for example \cite[Section 3.10]{Kel05}). 
\begin{equation*}
    \int^{W\in\Jpq} E(W) \wedge \Jpq(W,-)\cong E.
\end{equation*}
Alternatively, we can describe a functor $E\in \Epq$ in terms of natural transformations, by the enriched Yoneda lemma.
\begin{equation*}
    E(W)\cong\Nat_{p,q}(\Jpq(W,-),E)=\int\limits_{V\in\Jpq} \TTop_* (\Jpq(W,V),E(V)).
\end{equation*}

Another useful result, that we use throughout the paper, is that the functor $\Nat_{p,q}(-,F)$ sends homotopy cofibre sequences to homotopy fibre sequences. This follows from the fact that the functor $\TTop_*(-,A):\C\TTop_*\rightarrow \C\TTop_*$ sends homotopy cofibre sequences to homotopy fibre sequences (it is contravariant, sends colimits to limits, and $\CTop_*$ is closed symmetric monoidal) and using the definition of $\Nat_{p,q}(-,F)$ as the end above. 

\end{rem}

\subsection{Derivatives}

Derivatives play a key role in calculus of real functions. They describe the difference between successive polynomial approximations in the Taylor series. As the name calculus suggests, one can define a notion of derivatives of functors in orthogonal calculus, as done by Weiss in \cite[Section 2]{Wei95} and Barnes and Oman in \cite[Section 4]{BO13}. In this section, we will extend this theory to the $\C$-equivariant setting. Derivatives play a key role in the classification of $(p,q)$-homogeneous functors, as the derivative adjunctions form one half of the zig-zag of equivalences between the $(p,q)$-homogeneous model structure and the category of orthogonal $\C$-spectra with an action of $O(p,q)$, see Theorem \ref{zigzagclassification}. 

Let $i_{p,q}^{l,m}:\R^{p,q}\rightarrow\R^{l,m}$\index{$i_{p,q}^{l,m}$} be the $\C$-equivariant inclusion map $(x,y)\mapsto (x,0,y,0)$, where $p\leq l$ and $q\leq m$. Such a map induces a group homomorphism $O(p,q)\rightarrow O(l,m)$, which is $O(p,q)$-equivariant by letting $O(p,q)$ act on the first $p$ and $q$ coordinates of $O(l,m)$. That is, both $\R^{p,q}$ and $\R^{l,m}$ are $(O(p,q)\rtimes \C)$-spaces. 

This map induces a map of $(O(p,q)\rtimes\C)$-equivariant spaces 
\begin{align*}
(i_{p,q}^{l,m} )_{U,V}:\Gmor{p,q}{U}{V}&\rightarrow\Gmor{l,m}{U}{V}\\
(f,x)&\mapsto(f,(i_{p,q}^{l,m}\otimes id)(x))   
\end{align*}
which in turn induces a map on the $(O(p,q)\rtimes\C)\TTop_*$-enriched categories $\CJ{p}{q}\rightarrow\CJ{l}{m}$. 

\begin{definition}\index{$\res_{p,q}^{l,m}$}\index{$\res_{p,q}^{l,m}/O(l-p,m-q)$}
Let $p\leq l$ and $q\leq m$.

\noindent Define the \emph{restriction functor} $\res_{p,q}^{l,m}:\CE{l}{m} \rightarrow \CE{p}{q}$ as precomposition with $i_{p,q}^{l,m}$.

\noindent Define the \emph{restriction-orbit functor} by
\begin{align*}
\res_{p,q}^{l,m}/O(l-p,m-q):\OCE{l}{m}&\rightarrow\OCE{p}{q}\\
F&\mapsto (F\circ i_{p,q}^{l,m})/O(l-p,m-q).
\end{align*}

\end{definition}

The restriction and restriction-orbit functors have right adjoints. Before we can define them, we must define an adjoint to the orbit functor, see \cite[Lemma 4.2]{BO13}. 

\begin{lem}
Let $p\leq l$ and $q\leq m$. There is an adjoint pair 
\begin{equation*}
    (-)/O(l-p,m-q):(O(l,m)\rtimes\C)\TTop_* \rightleftarrows \COTop_* : \CI^{l,m}_{p,q}.
\end{equation*}
An $(O(p,q)\rtimes\C)$-space $A$ can be considered as an $((O(p,q)\times O(l-p,m-q))\rtimes\C)$-space, by letting $O(l-p,m-q)$ act trivially. Call this space $\varepsilon^*A$. Define $\CI^{l,m}_{p,q}A$ to be the space of $(O(p,q)\times O(l-p,m-q))$-equivariant maps from $O(l,m)\rtimes \C$ to $\varepsilon^*A$, which has the $(O(l,m)\rtimes\C)$-action induced by the conjugation $\C$-action and the action of $O(l,m)$ on itself. 
\end{lem}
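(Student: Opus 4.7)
The plan is to construct the natural bijection directly and verify the equivariance conditions. First, note that the conjugation $\C$-action on $O(l,m)$ preserves the product decomposition $O(p,q)\times O(l-p,m-q)\leq O(l,m)$, because it is induced by the diagonal involution on $\R^{l,m}=\R^l\oplus\R^{m\delta}$, which preserves the splitting $\R^{p,q}\oplus\R^{l-p,m-q}$. Consequently $O(l-p,m-q)$ is normal in $(O(p,q)\times O(l-p,m-q))\rtimes\C$ with quotient $O(p,q)\rtimes\C$, and the orbit space $X/O(l-p,m-q)$ inherits a well-defined $(O(p,q)\rtimes\C)$-action from any $(O(l,m)\rtimes\C)$-space $X$.

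The bijection will send an $(O(p,q)\rtimes\C)$-equivariant pointed map $f\colon X/O(l-p,m-q)\to A$ to $\tilde f\colon X\to \CI^{l,m}_{p,q}A$ defined by $\tilde f(x)(g)=f([g\cdot x])$, with inverse $\phi\mapsto\check\phi$ given by $\check\phi([x])=\phi(x)(e)$. This latter assignment descends to the orbit space because each $\phi(x)$ is $O(l-p,m-q)$-equivariant into $\varepsilon^*A$, on which $O(l-p,m-q)$ acts trivially. Continuity, naturality, and the mutual inverse property follow formally from the exponential law, so the real work lies in three equivariance checks: (a) $\tilde f(x)$ is $(O(p,q)\times O(l-p,m-q))$-equivariant, immediate from the $O(p,q)$-equivariance of $f$ combined with the fact that the $O(l-p,m-q)$-direction is collapsed in both source and target; (b) $\tilde f$ is $O(l,m)$-equivariant via right translation on $O(l,m)\rtimes\C$, which is a direct comparison of $\tilde f(Tx)(g)=f([gT\cdot x])$ with $(T\cdot\tilde f(x))(g)=\tilde f(x)(gT)$; and (c) $\tilde f$ is $\C$-equivariant with respect to the $\C$-action on $\CI^{l,m}_{p,q}A$ built from the conjugation structure on $O(l,m)\rtimes\C$ together with the $\C$-action on $\varepsilon^*A$.

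The main obstacle is (c). Because $\C$ does not commute with $O(l,m)$ inside the semidirect product — so that $\sigma g=(\sigma*g)\sigma$ rather than $g\sigma$ for $g\in O(l,m)$ — one must carefully propagate $\sigma$ past elements of $O(l,m)$ when comparing $\tilde f(\sigma x)(g)$ with $(\sigma\cdot\tilde f(x))(g)$, and verify that the resulting twist is precisely cancelled by the $\C$-equivariance of $f$ on $A$ together with the conjugation on $\varepsilon^*A$. Once this bookkeeping is in place, the triangle identities reduce to unwinding the formulas, and the adjunction is topological by construction.
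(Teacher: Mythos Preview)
The paper does not supply a proof of this lemma: it is stated with a reference to \cite[Lemma 4.2]{BO13} for the non-equivariant analogue, and nothing further. Your proposal therefore fills in what the paper omits, and the approach you take --- explicit construction of the bijection via $\tilde f(x)(g)=f([g\cdot x])$ and its inverse, followed by the three equivariance checks --- is the standard one and is correct. Your preliminary observation that the conjugation $\C$-action on $O(l,m)$ preserves the block subgroup $O(p,q)\times O(l-p,m-q)$ (so that $O(l-p,m-q)$ is normal in the full semidirect product and the orbit functor genuinely lands in $(O(p,q)\rtimes\C)$-spaces) is exactly the point that distinguishes this from the non-equivariant statement in \cite{BO13}, and your identification of check~(c) as the only place where the semidirect product twist $\sigma g=(\sigma*g)\sigma$ must be tracked carefully is accurate.
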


\begin{definition}\label{induction definition}\index{$\ind_{p,q}^{l,m}$}\index{$\ind_{p,q}^{l,m}\CI$}\index{$\ind_{0,0}^{l,m}\varepsilon^*$}
Let $p\leq l$ and $q\leq m$.

\noindent Define the \emph{induction functor} $\ind_{p,q}^{l,m}:\CE{p}{q} \rightarrow \CE{l}{m}$ by
\begin{equation*}
    \ind_{p,q}^{l,m}F:U\mapsto\Nat_{p,q}(\CJ{l}{m}(U,-),F),
\end{equation*}
where the space of natural transformations of objects of $\Epq$ is equipped with the conjugation $\C$-action (see Remark \ref{remNat}).
\noindent 

Define the \emph{inflation-induction functor} $\ind_{p,q}^{l,m}\CI:\OCE{p}{q} \rightarrow \OCE{l}{m}$ by
\begin{equation*}
    \ind_{p,q}^{l,m}\CI F:U\mapsto\Nat_{\OEpq}(\CJ{l}{m}(U,-),\CI_{p,q}^{l,m}\circ F).
\end{equation*}
When $p,q=0$, $\CI_{p,q}^{l,m}$ simply gives $F$ the trivial $O(l,m)$-action, hence we write $\ind_{0,0}^{l,m}\CI F$ as $\ind_{0,0}^{l,m}\varepsilon^*F$. This is what we call the $(l,m)$-th derivative of $F$, denoted by $$F^{(l,m)}:=\ind_{0,0}^{l,m}\varepsilon^* F.$$\index{$(-)^{(p,q)}$} 
\end{definition}

\begin{lem}
The induction functor $\ind_{p,q}^{l,m}$ is right adjoint to the restriction functor $\res_{p,q}^{l,m}$.
The inflation-induction functor $\ind_{p,q}^{l,m}\CI$ is right adjoint to the restriction-orbit functor $\res_{p,q}^{l,m}/O(l-p,m-q)$.
\end{lem}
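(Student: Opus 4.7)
The plan is to prove both adjunctions as instances of the standard enriched Kan extension argument, with the second adjunction reducing to the first once combined with the orbit--inflation adjunction from the preceding lemma.

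For $\res_{p,q}^{l,m} \dashv \ind_{p,q}^{l,m}$, I would fix $F \in \CE{l}{m}$ and $E \in \CE{p}{q}$ and write $iV := i_{p,q}^{l,m}(V)$. Unfolding both sides as ends in the manner of Remark~\ref{remNat}, the strategy is to start from
\[
\Nat_{l,m}(F, \ind E) \cong \int_{U \in \CJ{l}{m}} \TTop_*\!\left(F(U),\, \int_{V \in \CJ{p}{q}} \TTop_*(\CJ{l}{m}(U, iV), E(V))\right),
\]
use that $\TTop_*(A,-)$ preserves limits to commute the inner end past $\TTop_*(F(U),-)$, apply the smash--hom adjunction together with Fubini for ends to obtain
\[
\int_{V \in \CJ{p}{q}} \int_{U \in \CJ{l}{m}} \TTop_*(F(U) \wedge \CJ{l}{m}(U, iV), E(V)),
\]
and finally apply the coend--hom adjunction and the enriched co-Yoneda lemma. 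The co-Yoneda step identifies $\int^{U} F(U) \wedge \CJ{l}{m}(U, iV) \cong F(iV) = \res F(V)$ via the coend presentation displayed in Remark~\ref{remNat}, producing $\Nat_{p,q}(\res F, E)$. Since every step is natural and compatible with the levelwise conjugation $\C$-action, the chain is $\C$-equivariant, and taking $\C$-fixed points yields the adjunction on underlying morphisms.

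For the second adjunction, I would run exactly the same chain of end manipulations in the $(O(l,m) \rtimes \C)$-enriched setting. Given $F \in \OCE{l}{m}$ and $G \in \OCE{p}{q}$, the functor $\res F$ takes values in $(O(l,m) \rtimes \C)\TTop_*$ and, via the inclusion $O(p,q) \times O(l-p,m-q) \hookrightarrow O(l,m)$, is viewed as a functor landing in $((O(p,q) \times O(l-p,m-q)) \rtimes \C)\TTop_*$. Repeating the argument above with $(O(p,q) \rtimes \C)$-enriched ends produces a natural isomorphism between $\Nat_{\OCE{l}{m}}(F, \ind_{p,q}^{l,m} \CI G)$ and the space of $((O(p,q)\times O(l-p,m-q))\rtimes \C)$-equivariant natural transformations $\res F \to \CI G$. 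Applying the orbit--inflation adjunction of the preceding lemma inside the end in $V$ then converts this into $\Nat_{\OCE{p}{q}}((\res F)/O(l-p,m-q), G)$, which is the desired right-hand side.

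The principal difficulty is bookkeeping the equivariance: checking that each step in the chain of end isomorphisms is compatible with the conjugation $\C$-action, and in the second case also with the $O(p,q)$-action, and that the orbit--inflation adjunction for spaces may be applied inside the end in $V$ to yield an adjunction at the level of functor categories. Once each piece of the chain has been verified to respect the relevant group actions, both adjunctions follow from the enriched co-Yoneda lemma together with standard end--coend manipulations.
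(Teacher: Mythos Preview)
The paper states this lemma without proof, presumably regarding it as standard: $\ind_{p,q}^{l,m}$ is defined precisely by the pointwise right Kan extension formula, so the adjunction is an instance of the enriched precomposition--right Kan extension adjunction, and the second claim follows by composing with the orbit--$\CI$ adjunction of the preceding lemma. Your chain of end manipulations (commute ends, smash--hom, Fubini, coend--hom, enriched co-Yoneda) is exactly how one fills in that standard argument, and your reduction of the second adjunction to the first via the levelwise orbit--inflation adjunction is the intended route; there is nothing to correct.
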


\begin{rem}
There are commutative diagrams of categories
\[\begin{tikzcd}
	{\C\mathcal{E}_{p,q}} && {\C\mathcal{E}_{p+1,q}} \\
	\\
	{\C\mathcal{E}_{p,q+1}} && {\C\mathcal{E}_{p+1,q+1}}
	\arrow["{\ind_{p,q}^{p+1,q}}", from=1-1, to=1-3]
	\arrow["{\ind_{p+1,q}^{p+1,q+1}}", from=1-3, to=3-3]
	\arrow["{\ind_{p,q}^{p,q+1}}"', from=1-1, to=3-1]
	\arrow["{\ind_{p,q+1}^{p+1,q+1}}"', from=3-1, to=3-3]
	\arrow["{\ind_{p,q}^{p+1,q+1}}"', from=1-1, to=3-3]
\end{tikzcd}\]

As in \cite{Wei95}, the induction functors give us a notion of differentiation of functors in our input category $\Ezero$ (see Definition \ref{jzero and ezero def}). In particular, for the $\C$-equivariant case, there are two directions in one can take a derivative; in the $p$ direction and in the $q$ direction. These two different directions of differentiating can be thought of as partial derivatives, and then the commuting diagram above tells us that taking both possible orders of mixed partial derivatives is the same as taking the total derivative. 
\end{rem}

The following proposition defines induction iteratively as a homotopy fibre, and acts as a tool for calculating the derivatives. 

\begin{prop}\label{loops fibre sequence}
For all $U\in\Jzero$ and for all $F\in\Epq$, there are homotopy fibre sequences of $\C$-spaces
\begin{equation*}
\res_{p,q}^{p+1,q}\ind_{p,q}^{p+1,q} F(U)\rightarrow F(U)\rightarrow \Omega^{(p,q)\mathbb{R}}F(U\oplus \R)
\end{equation*}
and 
\begin{equation*}
\res_{p,q}^{p,q+1}\ind_{p,q}^{p,q+1} F(U)\rightarrow F(U)\rightarrow \Omega^{(p,q)\Rdelta}F(U\oplus \Rdelta),
\end{equation*}
where $\Omega^{(p,q)V}Y$ represents the space of maps $S^{(p,q)V}\rightarrow Y$, for a $\C$-space $Y$, and is given the conjugation $\C$-action. 
\end{prop}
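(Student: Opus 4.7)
The plan is to obtain the fibre sequences by applying the functor $\C\Nat_{p,q}(-,F)$ to the homotopy cofibre sequences coming from Proposition \ref{cofibseq}, and then identifying each of the three resulting terms by the enriched Yoneda lemma and a mapping-space--smash adjunction. I will discuss the trivial case $\R^\alpha=\R$; the argument for $\R^\alpha=\Rdelta$ is formally identical.

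First I would set $W=\R^{p,q}$ in Proposition \ref{cofibseq} and observe that the composition maps there are natural in $V$, so that the cofibre sequence
\begin{equation*}
\CJ{p}{q}(U\oplus\R,V)\wedge S^{(p,q)\R}\longrightarrow \CJ{p}{q}(U,V)\longrightarrow \CJ{p+1}{q}(U,V)
\end{equation*}
assembles into a homotopy cofibre sequence in $\Epq$ (viewed as a sequence of $\C$-equivariant functors in the variable $V$). Here the third term is understood via the $\C$-homeomorphism of Proposition \ref{cofibseq}, and I regard $\CJ{p+1}{q}(U,-)$ as an object of $\Epq$ by restriction along the functor $\CJ{p}{q}\to\CJ{p+1}{q}$ induced by $i_{p,q}^{p+1,q}$.

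Next I would apply $\C\Nat_{p,q}(-,F)$, which by the last paragraph of Remark \ref{remNat} carries homotopy cofibre sequences to homotopy fibre sequences of $\C$-spaces. Using the enriched Yoneda lemma, the middle term becomes
\begin{equation*}
\C\Nat_{p,q}\bigl(\CJ{p}{q}(U,-),F\bigr)\cong F(U),
\end{equation*}
and the left-hand term becomes $\C\Nat_{p,q}\bigl(\CJ{p+1}{q}(U,-),F\bigr)=\ind_{p,q}^{p+1,q}F(U)$, which, since $U$ already lies in $\Jzero\subset\CJ{p}{q}$, agrees with $\res_{p,q}^{p+1,q}\ind_{p,q}^{p+1,q}F(U)$. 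For the right-hand term, since $S^{(p,q)\R}$ is a fixed pointed $\C$-space (independent of $V$), the tensor-cotensor adjunction of the $\CTop_*$-enrichment yields
\begin{equation*}
\C\Nat_{p,q}\bigl(\CJ{p}{q}(U\oplus\R,-)\wedge S^{(p,q)\R},F\bigr)\cong \Omega^{(p,q)\R}\,\C\Nat_{p,q}\bigl(\CJ{p}{q}(U\oplus\R,-),F\bigr),
\end{equation*}
and a second application of Yoneda identifies this with $\Omega^{(p,q)\R}F(U\oplus\R)$, carrying the conjugation $\C$-action as required.

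The main obstacle I anticipate is keeping the $\C$-equivariance transparent throughout: each identification above must be checked to respect the conjugation $\C$-actions both on the natural-transformation objects (as in Remark \ref{remNat}) and on $S^{(p,q)\R}$ and $\Omega^{(p,q)\R}$. Once this bookkeeping is in place, applying the identical argument with $\R^\alpha=\Rdelta$ and $W=\R^{p,q}$ produces the second fibre sequence, completing the proof.
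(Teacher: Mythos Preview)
Your approach is correct and matches the paper's (which simply refers to the underlying argument of Weiss, using Proposition~\ref{cofibseq}). However, there is a consistent notational slip: you should be applying $\Nat_{p,q}(-,F)$, not $\C\Nat_{p,q}(-,F)$. By Remark~\ref{remNat}, $\C\Nat_{p,q}(E,F)=\Nat_{p,q}(E,F)^{\C}$ is the \emph{space} of $\C$-equivariant natural transformations, not a $\C$-space; the functor landing in $\C\TTop_*$ (with the conjugation action), for which the enriched Yoneda lemma and the cofibre-to-fibre property of Remark~\ref{remNat} are stated, is $\Nat_{p,q}(-,F)$. Indeed, Definition~\ref{induction definition} sets $\ind_{p,q}^{p+1,q}F(U)=\Nat_{p,q}(\CJ{p+1}{q}(U,-),F)$. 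With that correction your argument goes through verbatim.
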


\begin{proof}
This follows in the same way as for the underlying version \cite[Theorem 2.2]{Wei95}, using the equivariant version of \cite[Theorem 1.2]{Wei95}, Proposition \ref{cofibseq}.
\end{proof}

\subsection{The $(p,q)$-stable model structure}

We want to compare the $(p,q)$-th intermediate category $\OEpq$ with the category of orthogonal $\C$-spectra with an action of $O(p,q)$. The $(p,q)$-stable model structure constructed will be a modification of the stable model structure on orthogonal $\C$-spectra, see \cite[Section 3.4]{MM02}. This modification will account for the fact that the structure maps of objects in $\OEpq$ are of the form 
\begin{equation*}
    \sigma_X:S^{(p,q)V}\wedge X(W)\rightarrow X(W\oplus V).
\end{equation*}

We begin by defining two functors. These functors form an adjunction between the categories $\COTop_*$ and $\CTop_*$.  

\begin{definition}
Let $\beta:\C\rightarrow O(p,q)\rtimes \C$ be defined by $\alpha\mapsto (\Id_{p+q},\alpha)$.

Let $\beta^*$ be the restriction functor $\COTop_*\rightarrow \CTop_*$, which sends $X$ to the underlying space $X$ with $\C$-action given by $\sigma x= (\beta (\sigma)) x$. 

Let $\beta_!$ be the functor $\CTop_*\rightarrow \COTop_*$ defined by 
\begin{equation*}
    X\mapsto  (O(p,q)\rtimes \C)_+\wedge_{\C} X.
\end{equation*}
\end{definition}

There is a projective model structure on the intermediate categories $\OEpq$, similar to the levelwise model structure constructed by Barnes and Oman \cite[Lemma 7.6]{BO13}, in which fibrations and weak equivalences are defined objectwise. A left Bousfield localisation of this model structure will give the $(p,q)$-stable model structure. This projective model structure is exactly the level model structure of \cite[Section 6]{MMSS01}. 

\begin{definition}\label{def: obj WE}
Let $f:X\rightarrow Y$ be a map in $\OEpq$. Call $f$ an \emph{objectwise fibration} or an \emph{objectwise weak equivalence} if $\beta^*(f(U)):\beta^*(X(U))\rightarrow \beta^*(Y(U))$ is a fibration or weak equivalence of pointed $\C$-spaces, for each $U\in \Jzero$. Call $f$ a \emph{cofibration} if it has the left lifting property with respect to the objectwise acyclic fibrations. 
\end{definition}

\begin{lem}\label{objectwise model structure proof}\index{$\OEpql$}
There is a cellular, proper, $\C$-topological model structure on the $(p,q)$-th intermediate category $\OEpq$ formed by the objectwise weak equivalences and objectwise fibrations. Denote this model category by $\OEpql$ and call it the projective model structure on $\OEpq$. The generating cofibrations and generating acyclic cofibrations are given by $\Ilevel$ and $\Jlevel$ respectively,
\begin{align*}
    &\Ilevel=\{\Jmor{p,q}{U}{-}\wedge  \beta_!(i) : U\in\Jzero, i\in I_{\C}\}\\
    &\Jlevel=\{\Jmor{p,q}{U}{-}\wedge \beta_!(j) : U\in\Jzero, j\in J_{\C}\}
\end{align*}
where $I_{\C},J_{\C}$ are the generating cofibrations and acyclic cofibrations of the fine model structure on $\CTop_*$.
\end{lem}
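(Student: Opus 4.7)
The plan is to apply Kan's recognition/transfer theorem (as formulated for cofibrantly generated model categories, see for instance the machinery of Hirschhorn) to lift the model structure from the target category to the enriched functor category $\OEpq$. The natural adjunction to use is the evaluation-at-$U$ functor $\Ev_U:\OEpq\to(O(p,q)\rtimes\C)\TTop_*$, with left adjoint $X\mapsto\Jmor{p,q}{U}{-}\wedge X$ (by the enriched Yoneda lemma recorded in Remark \ref{remNat}); composing with the free-forget adjunction $(\beta_!,\beta^*)$ between $\C\TTop_*$ and $(O(p,q)\rtimes\C)\TTop_*$ produces exactly the sets $\Ilevel$ and $\Jlevel$ as images of the generators $I_{\C}$ and $J_{\C}$ of the fine model structure on $\C\TTop_*$. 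Since objectwise weak equivalences and objectwise fibrations are defined via $\beta^*\circ\Ev_U$, the transferred structure matches Definition \ref{def: obj WE}.

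First, I would verify the hypotheses of the recognition theorem in this order: (i) smallness of the domains of $\Ilevel$ and $\Jlevel$, which follows from smallness in $\C\TTop_*$ and the fact that $\Jmor{p,q}{U}{-}\wedge(-)$ and $\beta_!$ commute with filtered colimits; (ii) that $\Ilevel$- and $\Jlevel$-cell complexes can be built via the small object argument; (iii) that every $\Jlevel$-cell is an objectwise weak equivalence, and that every $\Ilevel$-injective map is both a fibration and a weak equivalence. For (iii) the critical observation is that smashing with the cofibrant pointed $\C$-space $\Jmor{p,q}{U}{V}$ (which is a Thom space of a $\C$-equivariant vector bundle over a compact manifold, hence a $\C$-CW complex) preserves acyclic cofibrations of $\C\TTop_*$, and $\beta_!$ preserves (acyclic) cofibrations by being left Quillen in the appropriate model structure on $(O(p,q)\rtimes\C)\TTop_*$ (see Remark \ref{rem: what is the ms on semi direct prod}). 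This transports to objectwise statements because $\Ev_U$ commutes with pushouts and transfinite compositions of the relevant cell attachments.

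With the model structure in hand, cellularity reduces to cellularity of $\C\TTop_*$ together with cellularity-preservation under the left adjoint; properness is inherited objectwise from $\C\TTop_*$ using that $\beta^*\Ev_U$ preserves pushouts along cofibrations and pullbacks along fibrations; and the $\C$-topological enrichment follows from the pushout-product axiom for $\CTop_*$ applied objectwise, combined with the compatibility of the $\CTop_*$-enrichment on $\OEpq$ (as in Remark \ref{remNat}) with $\Ev_U$.

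The main obstacle will be step (iii): showing that relative $\Jlevel$-cell complexes are objectwise weak equivalences in $\C\TTop_*$. This requires knowing that a pushout along $\Jmor{p,q}{U}{-}\wedge \beta_!(j)$, when evaluated at $V$ and restricted via $\beta^*$, produces an acyclic cofibration of $\C$-spaces; equivalently, that $\beta^*(\Jmor{p,q}{U}{V}\wedge\beta_!(j))$ is an acyclic cofibration in $\C\TTop_*$. This should follow by rewriting $\beta^*\beta_!(j)$ as a smash with $(O(p,q))_+$ (which is $\C$-cofibrant, being a compact Lie group with a smooth $\C$-action, hence $\C$-CW) and invoking the pushout-product axiom in the fine model structure on $\C\TTop_*$; once this lemma is isolated the remainder of the verification is essentially formal.
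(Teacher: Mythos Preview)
Your proposal is correct and is essentially what the paper defers to: the paper gives no proof for this lemma beyond the sentence preceding it, ``This projective model structure is exactly the level model structure of \cite[Section 6]{MMSS01},'' and your transfer argument via the adjunctions $(\Jmor{p,q}{U}{-}\wedge-,\Ev_U)$ and $(\beta_!,\beta^*)$ is exactly the content of that reference adapted to the present enrichment.

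One minor simplification for your ``main obstacle'' (iii): you do not need cofibrancy of $\Jmor{p,q}{U}{V}$ or of $O(p,q)_+$ in $\C\TTop_*$ to show that relative $\Jlevel$-cells are objectwise weak equivalences. The generating acyclic cofibrations $j\in J_{\C}$ are inclusions of strong deformation retracts (they have the form $(\C/H\times D^n)_+\hookrightarrow(\C/H\times D^n\times[0,1])_+$), and this property is preserved under $\beta_!$ and under smashing with any pointed space. Hence each $\Jmor{p,q}{U}{V}\wedge\beta_!(j)$ is already an h-cofibration and a homotopy equivalence in $(O(p,q)\rtimes\C)\TTop_*$, so pushouts and transfinite composites of such maps remain objectwise weak equivalences without appeal to the pushout-product axiom or to Illman-type $\C$-CW structures on Thom spaces. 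This is the route taken in \cite{MMSS01}, and it sidesteps the cofibrancy verification you flag as the crux.
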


\begin{rem}\label{remark: model structure on semi direct}
Similarly, there exists a cellular, proper, $\C$-topological model structure on the category $(O(p,q)\rtimes \C)\TTop_*$, where weak equivalences and fibrations are defined by restricting to $\CTop_*$ along $\beta^*$. The generating (acyclic) cofibrations are of the form $\beta_! (i)$ where $i$ is a generation (acyclic) cofibration of $\CTop_*$. This is exactly the model structure which is coarse with respect to $O(p,q)$ and fine with respect to $\C$ (see Remark \ref{rem: what is the ms on semi direct prod}).
\end{rem}

We will use this projective model structure to construct the $(p,q)$-stable model structure using a left Bousfield localisation. In the same was as for $\C$-spectra \cite[Chapter 3]{MM02}, we begin by first defining homotopy groups on objects of $\OEpq$.

\begin{definition}\label{def: pq pi equiv}\index{$(p,q)\pi_k^H (-)$}
Define the \emph{$(p,q)$-homotopy groups} of $X\in\OEpq$ by 
\begin{equation*}
(p,q)\pi_k^H X =
\left\{
	\begin{array}{ll}
		\colim_V \pi_k\left( \Omega^{(p,q)V} X(V)\right)^H,  & \mbox{if } k \geq 0 \\
		\colim_{V\supset \mathbb{R}^{|k|}} \pi_0\left( \Omega^{p,q({V-\mathbb{R}^{|k|}})} X(V)\right)^H,& \mbox{if } k < 0
	\end{array}
\right.
\end{equation*} 
where $V$ runs over the indexing $\C$-representations in $\CL$, $H\leq\C$ is a closed subgroup, and $V-\mathbb{R}^{|k|}$ denotes the orthogonal complement of $\mathbb{R}^{|k|}$ in $V$. Define a map $f:X\rightarrow Y$ in $\OEpq$ to be a $(p,q)\pi_*$-equivalence if the induced map $(p,q)\pi_k^H f: (p,q)\pi_k^H X\rightarrow (p,q)\pi_k^H Y$ is an isomorphism for all $k$ and all closed subgroups $H\leq \C$. 
\end{definition}

One can easily verify that if $\C$ was replaced by the trivial group, the $(p,q)$-homotopy groups for $p+q=n$ are exactly the $n$-homotopy groups defined in \cite[Definition 7.7]{BO13}. 

Now we identify the fibrant objects of the $(p,q)$-stable model structure. These are a generalisation of $\Omega$-spectra, which are the fibrant objects of the stable model structure on orthogonal spectra (see Barnes and Roitzheim \cite[Corollary 5.2.17]{BR20}). 

\begin{definition}\label{def: pq omega spectrum}
An object $X$ of $\OEpq$ has structure maps $$\sigma_X:S^{(p,q)V}\wedge X(W)\rightarrow X(W\oplus V)$$induced by the identification of $S^{(p,q)V}$ as a subspace of $\Jpq(W,W\oplus V)$ and the structure maps of $X$ being an enriched functor. The object $X$ is called a \emph{$(p,q)\Omega$-spectrum} if its adjoint structure maps $$\tilde{\sigma}_X:X(W)\rightarrow \Omega^{(p,q)V} X(W\oplus V)$$\index{$\tilde{\sigma}_X$}are weak equivalences of $\C$-spaces, for all $V,W\in \Jzero$. \end{definition}

\begin{lem}\label{omega spectra lemma}
$X$ is a $(p,q)\Omega$-spectrum if and only if for all $W\in\Jzero$ the maps
\begin{align*}
    X(W)&\rightarrow\Omega^{(p,q)\R}X(W\oplus\R)\\
    X(W)&\rightarrow\Omega^{(p,q)\Rdelta}X(W\oplus\Rdelta)
\end{align*}
are weak equivalences of $\C$-spaces.
\end{lem}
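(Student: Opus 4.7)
The forward implication is immediate from the definition of a $(p,q)\Omega$-spectrum: specialising $V$ to $\R$ and to $\Rdelta$ gives the two stated conditions. For the converse, I would proceed by induction on $\dim V$, showing that the adjoint structure map $\tilde{\sigma}_X^{V, W}\colon X(W) \to \Omega^{(p,q)V} X(W \oplus V)$ is a weak equivalence of $\C$-spaces for every $V, W \in \Jzero$. Every object of $\Jzero$ is $\C$-isomorphic to some $\R^{p',q'}$, so for $\dim V > 0$ we may fix a $\C$-equivariant decomposition $V \cong V' \oplus \R^\alpha$ with $\R^\alpha$ either the trivial or the sign $\C$-representation and $\dim V' < \dim V$; the base case $V = 0$ is immediate.

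The key input is a factorisation of $\tilde{\sigma}_X^{V,W}$ coming from the enriched functoriality of $X \in \OEpq$. The structure maps of $X$ are induced by composition in $\Jpq$ via the canonical inclusions $S^{(p,q)U} \hookrightarrow \Jmor{p,q}{W}{W \oplus U}$, and this composition restricts to the standard $\C$-homeomorphism $S^{(p,q)V'} \wedge S^{(p,q)\R^\alpha} \cong S^{(p,q)(V' \oplus \R^\alpha)}$. Taking adjoints produces a commutative diagram whose composite realises $\tilde{\sigma}_X^{V, W}$:
$$X(W) \xrightarrow{\tilde{\sigma}_X^{V',W}} \Omega^{(p,q)V'} X(W \oplus V') \xrightarrow{\Omega^{(p,q)V'} \tilde{\sigma}_X^{\R^\alpha, W \oplus V'}} \Omega^{(p,q)V} X(W \oplus V).$$
The first map is a weak equivalence of $\C$-spaces by the inductive hypothesis, and the second is $\Omega^{(p,q)V'}$ applied to the map $\tilde{\sigma}_X^{\R^\alpha, W \oplus V'}$, which is a weak equivalence by hypothesis.

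The remaining step, which I expect to be the only real technical point, is to verify that $\Omega^{(p,q)V'}$ preserves weak equivalences of $\C$-spaces in the fine model structure. This is an instance of the standard fact that, for any finite pointed $\C$-CW complex $A$ and any closed subgroup $H \leq \C$, the functor $\mathrm{Map}_*^H(A, -)$ takes equivariant weak equivalences (i.e.\ maps inducing weak equivalences on all fixed-point spaces) to weak equivalences of spaces; since every object of $\CTop_*$ is fibrant in the fine model structure and $S^{(p,q)V'}$ is a finite pointed $\C$-CW complex, applying this with $A = S^{(p,q)V'}$ yields the preservation property and completes the induction.
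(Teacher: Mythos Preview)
Your proposal is correct and follows essentially the same approach as the paper: both argue the converse by iteratively decomposing $V$ into one-dimensional summands (trivial or sign) and composing the resulting adjoint structure maps. The paper presents this as a commutative grid of weak equivalences indexed by $\R^{m,n}$, while you phrase it as an explicit induction on $\dim V$ and spell out why $\Omega^{(p,q)V'}$ preserves weak equivalences of $\C$-spaces---a point the paper leaves implicit---but the underlying argument is the same.
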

\begin{proof}
If $X$ is a $(p,q)\Omega$-spectrum, then clearly both maps are weak equivalences by setting $V=\R$ and $V=\Rdelta$ in Definition \ref{def: pq omega spectrum}. 

If $X$ is such that the two maps are weak equivalences, then $X$ being a $(p,q)\Omega$-spectrum follows by repeated application of the weak equivalences, as demonstrated in the diagram below. 
\[\begin{tikzcd}
	{X(W)} & {\Omega^{(p,q)\mathbb{R}}X(W\oplus\mathbb{R})} & \dots & {\Omega^{(p,q)\mathbb{R}^m}X(W\oplus\mathbb{R}^m)} \\
	{\Omega^{(p,q)\mathbb{R}^\delta}X(W\oplus\mathbb{R}^\delta)} & {\Omega^{(p,q)\mathbb{R}^{1,1}}X(W\oplus\mathbb{R}^{1,1})} \\
	\vdots && \ddots \\
	{\Omega^{(p,q)\mathbb{R}^{n\delta}}X(W\oplus\mathbb{R}^{n\delta})} &&& {\Omega^{(p,q)\mathbb{R}^{m,n}}X(W\oplus\mathbb{R}^{m,n})}
	\arrow["\simeq", from=1-1, to=1-2]
	\arrow["\simeq"', from=1-1, to=2-1]
	\arrow["\simeq"', from=2-1, to=2-2]
	\arrow["\simeq"', from=3-1, to=4-1]
	\arrow["\simeq", from=1-2, to=2-2]
	\arrow["\simeq", dotted, from=1-4, to=4-4]
	\arrow["\simeq"', dotted, from=4-1, to=4-4]
	\arrow["\simeq", from=1-3, to=1-4]
	\arrow["\simeq", from=1-2, to=1-3]
	\arrow["\simeq"', from=2-1, to=3-1]
\end{tikzcd}\]
\end{proof}

We now want to identify the class of maps that will be used in the left Bousfield localisation. Let $\lambda_{V,W}^{p,q} :\Jmor{p,q}{W\oplus V}{-}\wedge S^{(p,q)W} \rightarrow \Jmor{p,q}{V}{-}$ be the restricted composition map, where $S^{(p,q)W}$ has been $\C$-equivariantly identified with the closure of the subspace of pairs $(i,x)$ in $\Jmor{p,q}{V}{W\oplus V}$ with $i$ the standard inclusion.

The maps $\lambda_{V,W}^{p,q}$ are $(p,q)\pi_*$-equivalences, by a similar argument as in the non-equivariant case \cite[Lemma 7.12]{BO13}. Now we will follow the same procedure as Mandell and May \cite[Section 3.4]{MM02} to turn these maps into cofibrations, in order to make generating sets for the $(p,q)$-stable model structure. 

Let $M\lambda_{V,W}^{p,q}$ be the mapping cylinder of $\lambda_{V,W}^{p,q}$. Then the map $\lambda_{V,W}^{p,q}$ can be factored as a cofibration $k_{V,W}^{p,q}$ and a deformation retract $r_{V,W}^{p,q}$ as follows. 
\begin{equation*}
    \Jmor{p,q}{W\oplus V}{-}\wedge S^{p,q W}\xrightarrow{k_{V,W}^{p,q}} M\lambda_{V,W}^{p,q} \xrightarrow{r_{V,W}^{p,q}} \Jmor{p,q}{V}{-}
\end{equation*}

\begin{definition}\index{$\Jstable$}
Define $\Jstable=\Jlevel\cup \{i\square k_{V,W}^{p,q} : i\in I_{\C}\text{ and } V,W\in \Jzero\}$, where $f\square g $ denotes the pushout product of two maps $f:A\rightarrow B$ and $g:X\rightarrow Y$, which is defined by 
\begin{equation*}
    f\square g: A\wedge Y \coprod_{A\wedge X} B\wedge X \rightarrow B\wedge Y.
\end{equation*}
\end{definition}

\begin{prop}\label{prop: stable ms}\index{$\OEpq^s$}
There is a cofibrantly generated, proper, cellular $\C$-topological model structure on the $(p,q)$-th intermediate category $\OEpq$ called the $(p,q)$-stable model structure. The cofibrations are the same as for the projective model structure, the weak equivalences are the $(p,q)\pi_*$-equivalences, and the fibrant objects are the $(p,q)\Omega$-spectra. The generating cofibrations and generating acyclic cofibrations are the sets $\Ilevel$ and $\Jstable$ respectively. Denote this model category by $\OEpq^s$.
\end{prop}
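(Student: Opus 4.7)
The plan is to realise the $(p,q)$-stable model structure as the left Bousfield localisation of the projective model structure $\OEpql$ of Lemma \ref{objectwise model structure proof} at the set $\Lambda = \{\lambda_{V,W}^{p,q} : V, W \in \Jzero\}$. Since $\OEpql$ is cellular, left proper and $\C$-topological, Hirschhorn's localisation theorem applies and yields a cofibrantly generated model structure with the same cofibrations. The generating cofibrations remain $\Ilevel$, while the generating acyclic cofibrations take the standard Hirschhorn form $\Jlevel$ together with the horn-type pushout products $i\square k_{V,W}^{p,q}$ for $i\in I_{\C}$; this set is exactly $\Jstable$. Cellularity and left properness are then immediate consequences of the localisation theorem.

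To identify the fibrant objects I would unpack the local object condition. A projectively fibrant $X$ is $\Lambda$-local iff $\mathrm{Map}^h(\lambda_{V,W}^{p,q}, X)$ is a weak equivalence of pointed $\C$-spaces for all $V, W \in \Jzero$, where $\mathrm{Map}^h$ denotes the derived mapping space in $\OEpql$. Via the enriched Yoneda formula (see Remark \ref{remNat}), $\mathrm{Map}^h(\Jpq(V, -), X) \simeq X(V)$ and $\mathrm{Map}^h(\Jpq(W \oplus V, -) \wedge S^{(p,q)W}, X) \simeq \Omega^{(p,q)W} X(W \oplus V)$, so locality translates exactly into the adjoint structure maps $\tilde{\sigma}_X$ being weak equivalences of $\C$-spaces. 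Lemma \ref{omega spectra lemma} then reduces this to the cases $W = \R$ and $W = \R^\delta$, so the fibrant objects are precisely the $(p,q)\Omega$-spectra. To identify the localised weak equivalences with the $(p,q)\pi_*$-equivalences of Definition \ref{def: pq pi equiv}, I would follow the Mandell--May and Barnes--Oman strategy: first show that a $(p,q)\pi_*$-equivalence between $(p,q)\Omega$-spectra is an objectwise weak equivalence (by evaluating the $\Omega$-spectrum condition separately on $\C$-fixed points and underlying spaces, and using that the colimits defining $(p,q)\pi_*$ stabilise); then show that fibrant replacement in the localisation is always a $(p,q)\pi_*$-equivalence, because it is built by iterated pushouts of the maps $k_{V,W}^{p,q}$, each of which is a $(p,q)\pi_*$-equivalence by the same argument used for $\lambda_{V,W}^{p,q}$. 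A two-out-of-three argument then pins down the localised weak equivalences.

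Right properness does not come for free from Bousfield localisation, and I would treat it separately by checking that the pullback of a $(p,q)\pi_*$-equivalence along a $(p,q)$-stable fibration is again a $(p,q)\pi_*$-equivalence, via the long exact sequence of $(p,q)\pi_*^H$-homotopy groups associated to a fibre sequence whose base, total and fibre are $(p,q)\Omega$-spectra. The $\C$-topological structure descends from the level structure by verifying the pushout-product axiom on generating sets, using that the new generators in $\Jstable$ are themselves pushout products with generators of $\CTop_*$. The main obstacle I anticipate is the identification of the $\Lambda$-local equivalences with the $(p,q)\pi_*$-equivalences: this requires tracking the fixed-point homotopy groups $(p,q)\pi_k^H$ for both $H = e$ and $H = \C$ through the iterative fibrant replacement and controlling how the filtered colimit defining $(p,q)\pi_*$ interacts with small-object-argument pushouts of the $k_{V,W}^{p,q}$. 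The essential technical ingredient is that each $k_{V,W}^{p,q}$ is simultaneously a projective cofibration and a $(p,q)\pi_*$-equivalence, which propagates through transfinite compositions to fibrant replacement.
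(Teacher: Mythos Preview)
Your proposal is correct and takes the same approach as the paper: both realise $\OEpq^s$ as the left Bousfield localisation of the projective model structure at the set of maps $\{\lambda_{V,W}^{p,q}\}$. The paper's proof is in fact a one-liner asserting exactly this, implicitly deferring all of the verifications you sketch (identification of fibrant objects, of weak equivalences with $(p,q)\pi_*$-equivalences, right properness, and the $\C$-topological structure) to the standard Mandell--May/Barnes--Oman machinery; your outline simply makes those deferred checks explicit.
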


\begin{proof}
The $(p,q)$-stable model structure on $\OEpq$ is exactly the left Bousfield localisation of the projective model structure with respect to the class of maps $\lambda_{V,W}^{p,q}$.
\end{proof}

\section{Equivariant Polynomial functors}

In differential calculus, polynomial functions and derivatives are used to approximate real functions via the Taylor series. In orthogonal calculus, Weiss defines a class of input functors with properties analogous to those of polynomial functions, called polynomial functors, see \cite[Section 5]{Wei95}. These polynomial functors can be constructed into a tower that approximates a given functor, much like the Taylor series does for functions. It is the fibres of the maps between these polynomial approximation functors that are classified as spectra by the classification theorem, see \cite[Theorem 9.1]{Wei95} and \cite[Theorem 10.3]{BO13}. 

One can define a class of functors with polynomial properties in the $\C$-equivariant input category $\Ezero$. The addition of the $\C$-action makes it necessary to introduce an indexing shift from the underlying calculus. In particular, $\tau_n$ in the underlying calculus is defined using the poset $\{0\neq U\subseteq \R^{n+1}\}$ and $\taupq$ in the $\C$-calculus is defined using the poset $\{0\neq U\subseteq \R^{p,q}\}$.

\subsection{Polynomial functors}
In this section, we will adapt the definition of polynomial functors from the underlying calculus to fit the new $\C$-equivariant categories defined in Section \ref{sec: equiv functor cats}. We begin by defining a functor $\taupq$. The $(p,q)$-th complement bundle $\Gmor{p,q}{U}{V}$ has an associated sphere bundle $S\Gmor{p,q}{U}{V}$\index{$S\Gmor{p,q}{U}{V}$}. Considering $S\Gmor{p,q}{-}{-}:\Jzero^{op}\times \Jzero\rightarrow \CTop_*$ as a $\C\TTop_*$-enriched functor, we define the functor $\taupq:\Ezero\rightarrow \Ezero$ as follows.
\begin{definition}\index{$\taupq$}
Let $E\in \Ezero$. Define the functor $\taupq E\in\Ezero$ by 
\begin{equation*}
    \taupq E(V)=\Nat_{0,0}(S\Gmor{p,q}{V}{-}_+, E).
\end{equation*}
\end{definition}

There is an alternative description of $\taupq$ as a homotopy limit, that can be derived as a consequence of the following Proposition, which is a $\C$-equivariant generalisation of \cite[Proposition 4.2]{Wei95}. The proposition states that the sphere bundle of the vector bundle $\Gmor{p,q}{U}{V}$ can be written as a homotopy colimit, and in particular we can do this in a $\C$-equivariant way. The proof is analogous to that of the underlying Proposition, and checking that it holds equivariantly follows as in \cite[Proposition 2.9]{Tag22real}.

Recall that $\Jzero$ is the category of finite dimensional subrepresentations of $\bigoplus\limits_{i=1}^{\infty} \RC$ with inner product and with morphism spaces $\Jzero(U,V)=\LL (U,V)_+$ (Definition \ref{jzero and ezero def}). 
\begin{definition}\label{def: jzerobar}
Let $\Jzerobar$\index{$\Jzerobar$} denote the $\TTop_*$-enriched category of finite dimensional subspaces of $\bigoplus\limits_{i=1}^{\infty} \RC$ with inner product and with morphism spaces $\Jzerobar(U,V)=\LL (U,V)_+$.
\end{definition} 
There is a $\TTop_*$-enriched inclusion functor $\Jzero \hookrightarrow \Jzerobar$, since $\Jzero$ forms a subcategory of $\Jzerobar$. 

\begin{prop}\label{hocolimprop}
For all $V,W,X\in \Jzero$, there is a $\C$-homeomorphism 
\begin{equation*}
    S\Gmor{X}{V}{W}_+\cong \underset{0\neq U \subseteq X}
    {\hocolim} \Jzerobar (U\oplus V,W),
\end{equation*}
where $U$ is a non-zero subspace of $X$. 
\end{prop}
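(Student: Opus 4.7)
The plan is to follow Weiss's proof of the non-equivariant analog [Wei95, Proposition 4.2], verifying $\C$-equivariance of every step as in Taggart [Tag22real, Proposition 2.9]. The strategy is to realize both sides as explicit combinatorial models and to exhibit an explicit $\C$-equivariant homeomorphism between them, using the singular-value decomposition on the sphere-bundle side and the two-sided bar construction on the homotopy-colimit side.

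First I would model the right-hand side via the two-sided bar construction. The poset $\mathcal{P} = \{0 \neq U \subseteq X\}$ carries a $\C$-action by $\sigma \cdot U := \sigma(U)$, and the homotopy colimit is the geometric realization of a simplicial $\C$-space whose $n$-simplices, before the face and degeneracy relations, are indexed by chains $U_0 \subsetneq \cdots \subsetneq U_n$ of nonzero subspaces of $X$ together with a morphism in $\Jzerobar(U_n \oplus V, W)$. The $\C$-action permutes chains via $(U_0, \ldots, U_n) \mapsto (\sigma U_0, \ldots, \sigma U_n)$ and acts on each $\Jzerobar(U_n \oplus V, W)$ by conjugation.

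Next I would define the comparison map $\Phi$ into $S\Gmor{X}{V}{W}_+$. A datum $(U_0 \subsetneq \cdots \subsetneq U_n,\; h: U_n \oplus V \to W,\; (t_0, \ldots, t_n))$ is sent to $(f, x)$ with $f := h|_V$ and $x \in X \otimes f(V)^{\perp}$ the norm-one element
\[
x = \sum_{i=0}^{n} s_i\, x_i,
\]
where $x_i$ is the canonical basis-free element of $(U_i \ominus U_{i-1}) \otimes h(U_i \ominus U_{i-1})$ (with $U_{-1} := 0$) and the positive scalars $s_i$ are determined by the barycentric coordinates $t_i$ together with the normalization $|x| = 1$. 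I would then construct the inverse using the singular-value decomposition of $x$: grouping the nonzero singular values of $x$ into equivalence classes of coincident values canonically recovers a chain of subspaces $U_0 \subsetneq \cdots \subsetneq U_n$ of $X$ and an isometry $h : U_n \oplus V \to W$, with barycentric coordinates derived from the singular values. The ambiguities arising when singular values coincide or some $t_i$ vanishes match precisely the degeneracy and face relations of the bar construction, so the inverse descends to a continuous, well-defined map on the realization.

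Finally I would verify $\C$-equivariance. Because $\sigma$ acts by linear isometries on $X$, $V$, and $W$, every operation used in defining $\Phi$ and its inverse — orthogonal complements, restrictions of $h$ to $V$ and to $U_n$, normalization, and the singular-value decomposition — commutes with $\sigma$, so $\Phi$ is $\C$-equivariant. The main obstacle is the careful matching of the boundary stratification of the sphere bundle (by the support and singular-value multiplicities of $x$) with the simplicial structure of the bar construction: verifying that coincident singular values correspond to degenerate simplices and that vanishing barycentric coordinates correspond to face identifications. This combinatorial bookkeeping is the technical core of Weiss's original argument, but the $\C$-action introduces no additional obstruction because it respects all of the relevant stratifications and decompositions.
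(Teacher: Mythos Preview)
Your proposal is correct and follows precisely the approach indicated by the paper, which simply defers to Weiss's original argument \cite[Proposition 4.2]{Wei95} together with Taggart's equivariance check \cite[Proposition 2.9]{Tag22real}. You have supplied more detail than the paper does, but the strategy---bar-construction model for the homotopy colimit, comparison map built from the coevaluation element and inverted via the singular-value decomposition, with $\C$-equivariance following because $\sigma$ acts by isometries---is exactly the intended one.
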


Note that the categories $\Jzero$ and $\CTop_*$ are $\TTop_*$-enriched, and that $\C\TTop_*$ is powered over $\TTop_*$. We use the notation $\overline{E}$\index{$\overline{E}$} to represent the right Kan extension of an input functor $E\in\Ezero$ along the inclusion $\Jzero\hookrightarrow\Jzerobar$. In particular, $\overline{E}$ is the $\TTop_*$-enriched functor $\Jzerobar \rightarrow \C\TTop_*$ defined by 
\begin{equation*}
    \overline{E}(X)=\int\limits_{W\in\Jzero} \TTop_*\left(\Jzerobar (X,W),E(W)\right),
\end{equation*}
with the $\C$-action induced by the following $\C$-action on $\TTop_*\left(\Jzerobar (X,W),E(W)\right)$
\begin{equation*}
    \sigma * f (x) := \sigma (f(x))
\end{equation*}
for all $f\in \TTop_*\left(\Jzerobar (X,W),E(W)\right)$ and $x\in \Jzerobar (X,W)$. For details of this construction see \cite[Chapter 4]{Kel05}.

\begin{prop}
Let $E\in\Ezero$. There is a $\C$-equivariant homeomorphism $$\taupq E(V) \cong \underset{0\neq U \subseteq \R^{p,q}}{\holim} \overline{E}(U\oplus V).$$
\begin{proof}
Using Proposition \ref{hocolimprop}, we get that
\begin{align*}
\taupq E(V)&=\Nat_{0,0}(S \Gmor{p,q}{V}{-}_+,E) \\
&\cong \Nat_{0,0} \left( \underset{0\neq U \subseteq \R^{p,q} } {\hocolim} \Jzerobar (U\oplus V,-),E\right)\\
&= \int\limits_{W\in\Jzero} \TTop_*\left(\underset{0\neq U \subseteq \R^{p,q} } {\hocolim} \Jzerobar (U\oplus V,W),E(W)\right)\\
&\cong \underset{0\neq U \subseteq \R^{p,q}}{\holim} \int\limits_{W\in\Jzero} \TTop_*\left(\Jzerobar (U\oplus V,W),E(W)\right)\\
&= \underset{0\neq U \subseteq \R^{p,q}}{\holim} \overline{E}(U\oplus V)
\end{align*}    
The homeomorphism in line 4 above is the comparison, made by Weiss, between the homotopy colimit and homotopy limit constructions (see \cite[Proposition 5.2]{Wei95}). The comparison used in the underlying calculus is naturally $\C$-equivariant.
\end{proof}
\end{prop}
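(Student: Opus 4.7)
The plan is a definition-chase that moves the natural transformation space across the hocolim presentation of the sphere bundle. Starting from the definition $\taupq E(V) = \Nat_{0,0}(S\Gmor{p,q}{V}{-}_+, E)$, the first step is to invoke Proposition \ref{hocolimprop} to replace $S\Gmor{p,q}{V}{-}_+$ with the equivariant homotopy colimit $\hocolim_{0\neq U\subseteq \R^{p,q}}\Jzerobar(U\oplus V,-)$, indexed on the poset of nonzero subspaces of $\R^{p,q}$.

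Next, I would unpack $\Nat_{0,0}(-,E)$ as the end $\int_{W\in\Jzero}\TTop_*(-,E(W))$, recalled in Remark \ref{remNat}, and use that $\Nat_{0,0}(-,E)$ is contravariant and sends (homotopy) colimits in its first slot to (homotopy) limits; this pulls the indexing out as a holim over $0\neq U\subseteq\R^{p,q}$. Since ends are themselves weighted limits and limits commute with limits, one can then swap the holim past the end in $W$, arriving at
\begin{equation*}
\underset{0\neq U\subseteq\R^{p,q}}{\holim}\int_{W\in\Jzero}\TTop_*\bigl(\Jzerobar(U\oplus V,W),E(W)\bigr).
\end{equation*}
The inner end is by construction the value $\overline{E}(U\oplus V)$ of the right Kan extension of $E$ along $\Jzero\hookrightarrow\Jzerobar$, which concludes the identification.

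The main obstacle is verifying that the middle step, turning a mapping-out-of-hocolim into a holim-of-mapping-out, is a genuine $\C$-equivariant homeomorphism rather than only a weak equivalence. This is exactly Weiss's comparison \cite[Proposition 5.2]{Wei95}, which is given by an explicit formula built from classifying spaces of the indexing poset. Every $\C$-action in sight (conjugation on $\LL(-,-)$, the action on $E$, and the induced conjugation action on natural transformation spaces) is functorial in the categorical data, so the comparison formula commutes with these actions termwise. Hence the non-equivariant homeomorphism upgrades to a $\C$-equivariant one, and the remaining steps are strict isomorphisms of ends that are manifestly equivariant. No additional analytic input is required beyond a careful bookkeeping of the conjugation action.
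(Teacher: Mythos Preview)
Your proposal is correct and follows essentially the same approach as the paper: unwind the definition, apply Proposition \ref{hocolimprop}, expand $\Nat_{0,0}$ as an end, invoke Weiss's comparison \cite[Proposition 5.2]{Wei95} to convert mapping-out-of-hocolim into a holim, and identify the inner end as the right Kan extension $\overline{E}(U\oplus V)$. Your discussion of equivariance of the Weiss comparison matches the paper's remark that the underlying comparison is naturally $\C$-equivariant.
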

This homotopy limit has a $\C$-action, since it can be expressed as the totalization of a cosimplicial space, which has a $\C$-action. This is discussed in more detail in Lemma \ref{e.3}.

Now we define what it means for a functor to be (strongly) polynomial. This definition is a $\C$-equivariant version of \cite[Definition 5.1]{Wei95}. 

\begin{definition}\label{def: polynomial}
A functor $E\in\Ezero$ is called \emph{strongly $(p,q)$-polynomial} if and only if the map 
\begin{equation*}
    \rho_{p,q}E:E\rightarrow\taupq E
\end{equation*}
is an objectwise weak equivalence. 

A functor $E\in\Ezero$ is called \emph{$(p,q)$-polynomial} if and only if $E$ is both strongly $(p+1,q)$-polynomial and strongly $(p,q+1)$-polynomial. 
\end{definition}

The fibre of the map $\rho_{p,q}E$ determines how far a functor $E$ is from being strongly $(p,q)$-polynomial. Lemma \ref{hofiblemma} is a $\C$-equivariant generalisation of \cite[Proposition 5.3]{Wei95}, and it describes how this fibre can be calculated. To prove this, we need the following $\C$-equivariant cofibre sequence, the proof of which follows that of the non-equivariant setting (see \cite[Proposition 5.4]{BO13}).

\begin{prop}\label{sphere cofibre seq}
For all $V,W\in \Jzero$ there is a homotopy cofibre sequence in $\C\TTop_*$
\begin{equation*}
    S\Gmor{p,q}{V}{W}_+\rightarrow\Jzero(V,W)\rightarrow\Jmor{p,q}{V}{W}.
\end{equation*}
\end{prop}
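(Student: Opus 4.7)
The plan is to realize the claimed sequence as the standard Thom-space cofibre sequence associated to the $\C$-equivariant vector bundle $\Gmor{p,q}{V}{W}$ over $\LL(V,W)$, and then verify that every map and homotopy in the classical argument is $\C$-equivariant. Since $\LL(V,W)$ is compact, the Thom space $\Jmor{p,q}{V}{W}$ can be identified with $D\Gmor{p,q}{V}{W}/S\Gmor{p,q}{V}{W}$, where $D$ and $S$ denote the unit disk and unit sphere subbundles cut out by the fibrewise inner product on $\R^{p,q}\otimes f(V)^\perp$. This gives the standard cofibre sequence
\begin{equation*}
S\Gmor{p,q}{V}{W}_+ \longrightarrow D\Gmor{p,q}{V}{W}_+ \longrightarrow \Jmor{p,q}{V}{W}.
\end{equation*}

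Next I would use the bundle projection $D\Gmor{p,q}{V}{W} \to \LL(V,W)$, which is a homotopy equivalence with homotopy inverse the zero section; the required deformation retract is given by fibrewise scaling $(f,x,t) \mapsto (f,tx)$, $t\in[0,1]$. Applying this equivalence to the middle term replaces $D\Gmor{p,q}{V}{W}_+$ by $\LL(V,W)_+ = \Jzero(V,W)$, and the composite of the first map of the sequence with this equivalence is precisely the underlying map of sphere bundles followed by the projection to $\LL(V,W)$, which extends uniquely to a pointed map $S\Gmor{p,q}{V}{W}_+ \to \Jzero(V,W)$.

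The remaining task is to check $\C$-equivariance throughout. The action $\sigma(f,x) = (\sigma * f, \sigma x)$ of Definition \ref{def: gamma pq bundle} is by linear isometries on each fibre because the inner products on our representations respect the $\C$-action; consequently it restricts to actions on $D\Gmor{p,q}{V}{W}$ and $S\Gmor{p,q}{V}{W}$, the bundle projection covers the conjugation action on $\LL(V,W)$, and the scaling homotopy $(f,x,t) \mapsto (f,tx)$ commutes with the $\C$-action (since $t\in[0,1]$ is fixed by $\C$ and the action is linear in $x$). Hence the displayed cofibre sequence of pointed spaces is in fact one of pointed $\C$-spaces, and passing to the quotient yields a $\C$-homeomorphism $D\Gmor{p,q}{V}{W}/S\Gmor{p,q}{V}{W} \cong \Jmor{p,q}{V}{W}$ compatible with the actions.

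The main (but minor) obstacle is essentially bookkeeping, namely verifying that the scaling deformation retract and the Thom-space identification are strictly $\C$-equivariant rather than merely equivariant up to homotopy; once this is in place, the homotopy cofibre sequence in $\C\TTop_*$ follows because the map $S\Gmor{p,q}{V}{W}_+ \to \Jzero(V,W)$ is an unbased $\C$-cofibration (its source and target are the underlying spaces of a $\C$-CW pair, coming from a smooth $\C$-equivariant bundle over a compact $\C$-manifold), so the strict cofibre agrees with the homotopy cofibre.
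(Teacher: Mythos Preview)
Your approach is correct and is exactly the standard Thom-space cofibre sequence argument that the paper defers to via \cite[Proposition 5.4]{BO13}: identify $\Jmor{p,q}{V}{W}$ with $D\Gmor{p,q}{V}{W}/S\Gmor{p,q}{V}{W}$, use the cofibration $S\Gmor{p,q}{V}{W}_+ \hookrightarrow D\Gmor{p,q}{V}{W}_+$, and then replace the disk bundle by the equivariantly homotopy equivalent base $\LL(V,W)_+$ via the zero section.

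One small correction to your final paragraph: the map $S\Gmor{p,q}{V}{W}_+ \to \Jzero(V,W)$ is the bundle \emph{projection}, and this is not a cofibration (its source is not even a subspace of its target), so the pair is not a $\C$-CW pair and you cannot argue directly that its strict cofibre computes the homotopy cofibre. The cofibration you actually need is the one you already invoked earlier, namely the boundary inclusion $S\Gmor{p,q}{V}{W}_+ \hookrightarrow D\Gmor{p,q}{V}{W}_+$; once that strict cofibre is identified with $\Jmor{p,q}{V}{W}$, the desired homotopy cofibre sequence with middle term $\Jzero(V,W)$ follows because homotopy cofibres are invariant under replacing the target along a $\C$-homotopy equivalence, which your fibrewise scaling retract supplies. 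With that adjustment the argument is complete and matches the paper's.
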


Using this proposition, we can prove that the functors $S \Gmor{p,q}{V}{-}_+$ and $\Jmor{p,q}{V}{-}$ are cofibrant in the projective model structure. 
\begin{lem}\label{sphereandmorpharecofibrant}
The functors $S \Gmor{p,q}{V}{-}_+$ and $\Jmor{p,q}{V}{-}$ are cofibrant objects in $\Ezero$. 
\end{lem}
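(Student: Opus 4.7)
The plan is to prove both cofibrancy statements by induction on $p+q$, using Proposition \ref{cofibseq} to build $\Jmor{p,q}{V}{-}$ iteratively from representables and the homotopy colimit description of Proposition \ref{hocolimprop} to handle $S\Gmor{p,q}{V}{-}_+$. The base case $p=q=0$ is straightforward: the functor $\Jmor{0,0}{V}{-} = \LL(V,-)_+$ is representable, and the map $\ast \to \Jmor{0,0}{V}{-}$ equals $\Jmor{0,0}{V}{-} \wedge i$ for $i\colon \ast \to (\C/\C)_+$ in $I_\C$, which lies in $I_{\proj}$ of Proposition \ref{proj model structure}; furthermore $S\Gmor{0,0}{V}{-}_+ = \ast$ trivially since $\R^{0,0} = 0$.

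For the inductive step on $\Jmor{p,q}{V}{-}$, take $W = \R^{p,q}$ and $\R^\alpha \in \{\R, \Rdelta\}$ in Proposition \ref{cofibseq}. The explicit identification of $S^{W \otimes \R^\alpha}$ with the closed subspace of pairs $(i, x)$ whose first coordinate is the standard inclusion upgrades the stated homotopy cofibre sequence to a strict cofibre sequence in $\Ezero$:
\begin{equation*}
\Jmor{p,q}{V \oplus \R^\alpha}{-} \wedge S^{(p,q)\R^\alpha} \hookrightarrow \Jmor{p,q}{V}{-} \to \Jmor{p',q'}{V}{-},
\end{equation*}
where $(p',q') = (p+1,q)$ or $(p,q+1)$, and the first map is a projective cofibration. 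The inductive hypothesis yields cofibrancy of $\Jmor{p,q}{V \oplus \R^\alpha}{-}$, and the $\CTop_*$-enrichment of the projective model structure ensures that smashing a cofibrant functor with the cofibrant pointed $\C$-space $S^{(p,q)\R^\alpha}$ preserves projective cofibrancy. The cofibre $\Jmor{p',q'}{V}{-}$ is thereby cofibrant as the pushout of a cofibration from a cofibrant source along the terminal map.

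For $S\Gmor{p,q}{V}{-}_+$, apply Proposition \ref{hocolimprop} to express it as a $\C$-equivariant homotopy colimit
\begin{equation*}
S\Gmor{p,q}{V}{-}_+ \cong \hocolim_{0 \neq U \subseteq \R^{p,q}} \Jzerobar(U \oplus V, -)
\end{equation*}
in $\Ezero$. For each $\C$-invariant subspace $U$, the diagram entry is the representable $\Jmor{0,0}{U \oplus V}{-}$, cofibrant by the base case. Non-invariant subspaces come in free $\C$-orbits $\{U, \sigma U\}$, whose combined contribution is (after a non-equivariant identification $U \oplus V \cong \R^n$) the cofibrant free-orbit smash $(\C)_+ \wedge \Jmor{0,0}{\R^{n,0}}{-}$. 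Since homotopy colimits of projectively cofibrant diagrams in the cellular model structure on $\Ezero$ are cofibrant, we conclude that $S\Gmor{p,q}{V}{-}_+$ is cofibrant.

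The main obstacle is ensuring that the first map in the cofibre sequence of Proposition \ref{cofibseq} is a projective cofibration in $\Ezero$ rather than merely an objectwise closed $\C$-equivariant inclusion---only then does the strict pushout yield genuine projective cofibrancy of $\Jmor{p+1,q}{V}{-}$ and $\Jmor{p,q+1}{V}{-}$ rather than just a weak equivalence to a cofibrant object. This is established by the explicit description of the map as arising from the enriched composition in $\Jpq$ together with the projective cofibrancy of its source.
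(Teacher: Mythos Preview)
Your argument has two genuine gaps, and the paper takes a quite different (and shorter) route.

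\textbf{The gap for $\Jmor{p,q}{V}{-}$.} Your identified ``main obstacle'' is real, but your resolution is not a proof: knowing that the source of the restricted composition map is projectively cofibrant, and that the map arises from enriched composition, says nothing about whether the map itself is a projective cofibration (a retract of a relative $I_{\proj}$-cell complex). This gap is repairable: since by induction both the source $\Jmor{p,q}{V\oplus\R^\alpha}{-}\wedge S^{(p,q)\R^\alpha}$ and the middle term $\Jmor{p,q}{V}{-}$ are cofibrant, the \emph{mapping cone} (not the strict cofibre) is cofibrant, and Proposition~\ref{cofibseq} asserts a genuine $\C$-homeomorphism between this mapping cone and $\Jmor{p',q'}{V}{-}$. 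You should phrase the step that way rather than via strict pushouts.

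\textbf{The gap for $S\Gmor{p,q}{V}{-}_+$.} This one is more serious. The poset $\{0\neq U\subseteq\R^{p,q}\}$ in Proposition~\ref{hocolimprop} is a \emph{topological} category internal to $\C$-spaces (its object space is a disjoint union of Grassmannians; see the proof of Lemma~\ref{e.3}). Your decomposition into ``$\C$-invariant $U$'' and ``free $\C$-orbits $\{U,\sigma U\}$'' treats the index as a discrete set, which it is not. Moreover, for a subspace $U$ that is not $\C$-invariant, $U\oplus V$ carries no $\C$-action, so $\Jzerobar(U\oplus V,-)$ is not even an object of $\Ezero$; the $\C$-action on the homotopy colimit arises from the $\C$-action on the indexing category, not termwise. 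Finally, ``homotopy colimits of projectively cofibrant diagrams are cofibrant'' requires the diagram to be cofibrant in an appropriate diagram model structure, not merely objectwise cofibrant, and you have not addressed what this means over a topological indexing category.

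\textbf{The paper's approach.} The paper avoids both issues by a direct lifting argument. By definition $\Nat_{0,0}(S\Gmor{p,q}{V}{-}_+,E)=\taupq E(V)$, and $\taupq$ preserves objectwise acyclic fibrations (it is built from products, totalization, and cotensoring with $\C$-CW complexes). Hence for any objectwise acyclic fibration $E\to F$ the induced map $\taupq E(V)\to\taupq F(V)$ is an acyclic fibration of $\C$-spaces, and any lifting problem for $\ast\to S\Gmor{p,q}{V}{-}_+$ against $E\to F$ is solved by surjectivity on $\C$-fixed points. This gives cofibrancy of $S\Gmor{p,q}{V}{-}_+$ in one step, with no induction. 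Cofibrancy of $\Jmor{p,q}{V}{-}$ then follows from the cofibre sequence of Proposition~\ref{sphere cofibre seq} (not Proposition~\ref{cofibseq}): it is the homotopy cofibre of a map between the cofibrant objects $S\Gmor{p,q}{V}{-}_+$ and the representable $\Jzero(V,-)$.
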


\begin{proof}
The representable functor $\Jzero (V,-)$ is cofibrant by construction.

The homotopy limit used to construct $\taupq$ (see Lemma \ref{e.3}) preserves objectwise acyclic fibrations, since (indexed) products, totalization and the functor $\TTop_*(A,-)$, for a $\C$-CW complex $A$, all preserve acyclic fibrations in $\C\TTop_*$. It follows that $S\C\gamma_{p,q}(V,-)_+$ is cofibrant, by applying $\taupq$ to the diagram
\[\begin{tikzcd}
	{*} & E \\
	{S\C\gamma_{p,q}(V,-)_+} & F
	\arrow[from=1-1, to=1-2]
	\arrow[from=1-1, to=2-1]
	\arrow[from=1-2, to=2-2]
	\arrow[from=2-1, to=2-2]
\end{tikzcd}\]
where $E\rightarrow F$ is an objectwise acyclic fibration. Since, by Proposition \ref{sphere cofibre seq}, $\Jmor{p,q}{V}{-}$ is the cofibre of a map of cofibrant objects, it is also cofibrant. 
\end{proof}

The following result describes the relation between derivatives and polynomial functors. 

\begin{lem}\label{hofiblemma}
For all $E\in \Ezero$, and for all $V\in \Jzero$ there is a homotopy fibre sequence in $\C\TTop_*$
\begin{equation*}
    \ind_{0,0}^{p,q} E(V)\rightarrow E(V)\rightarrow\taupq E(V).
\end{equation*}
\end{lem}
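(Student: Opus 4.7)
The plan is to realise the claimed fibre sequence as the image of a homotopy cofibre sequence in $\Ezero$ under the contravariant hom $\Nat_{0,0}(-,E)$.

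First I would assemble the pointwise statement of Proposition \ref{sphere cofibre seq}, as $W$ varies over $\Jzero$, into a homotopy cofibre sequence of functors in $\Ezero$:
$$S\Gmor{p,q}{V}{-}_+ \longrightarrow \Jzero(V,-) \longrightarrow \Jmor{p,q}{V}{-}.$$
Since (co)fibre sequences in the projective model structure on $\Ezero$ are detected objectwise, this is indeed a homotopy cofibre sequence. All three terms are cofibrant in $\Ezero$: the representable $\Jzero(V,-)$ is cofibrant by construction, while $S\Gmor{p,q}{V}{-}_+$ and $\Jmor{p,q}{V}{-}$ are cofibrant by Lemma \ref{sphereandmorpharecofibrant}. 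This cofibrancy is what allows the next step to behave homotopically correctly.

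Next I would apply $\Nat_{0,0}(-,E)$. By the general principle recorded in Remark \ref{remNat} (the functor $\Nat_{0,0}(-,E)$ is contravariant, sends colimits to limits, and $\CTop_*$ is closed symmetric monoidal), it sends homotopy cofibre sequences of cofibrant objects to homotopy fibre sequences of $\C$-spaces. This yields
$$\Nat_{0,0}(\Jmor{p,q}{V}{-},E) \longrightarrow \Nat_{0,0}(\Jzero(V,-),E) \longrightarrow \Nat_{0,0}(S\Gmor{p,q}{V}{-}_+,E).$$
Finally I would identify each term: by Definition \ref{induction definition} the left term is $\ind_{0,0}^{p,q}E(V)$; by the enriched Yoneda lemma (Remark \ref{remNat}) the middle term is $E(V)$; and by the definition of $\taupq$ the right term is $\taupq E(V)$.

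The main subtlety to keep straight is the $\C$-equivariance of each identification: the conjugation $\C$-action used on the spaces of natural transformations in Remark \ref{remNat} must match the $\C$-action inherited from $\Gmor{p,q}{V}{-}$ (for $\taupq$) and from the Thom construction (for $\ind_{0,0}^{p,q}$). Because Proposition \ref{sphere cofibre seq} is already a cofibre sequence in $\C\TTop_*$ and the Yoneda isomorphism is naturally $\C$-equivariant, each identification respects the $\C$-actions, so the output is a fibre sequence in $\C\TTop_*$ as required.
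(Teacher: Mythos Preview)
Your proposal is correct and follows essentially the same route as the paper's proof: apply $\Nat_{0,0}(-,E)$ to the cofibre sequence of Proposition \ref{sphere cofibre seq} and identify the three terms via the definition of $\ind_{0,0}^{p,q}$, enriched Yoneda, and the definition of $\taupq$. Your added remarks on cofibrancy (Lemma \ref{sphereandmorpharecofibrant}) and on matching the $\C$-actions are helpful elaborations but do not change the underlying argument.
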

\begin{proof}
By the previous proposition, there is a $\C$-equivariant homotopy cofibre sequence in $\Ezero$
\begin{equation*}
     S\Gmor{p,q}{V}{-}_+\rightarrow\Jzero(V,-)\rightarrow\Jmor{p,q}{V}{-}.
\end{equation*}

Pick $E\in\Ezero$ and apply the contravariant functor $\Nat_{p,q}(-,E)$ to the cofibre sequence above. This yields a homotopy fibre sequence (see Remark \ref{remNat})
\begin{equation*}
\Nat_{0,0}\left(S\Gmor{p,q}{V}{-}_+,E\right)\leftarrow \Nat_{0,0}\left(\Jzero(V,-),E\right)\leftarrow \Nat_{0,0}\left(\Jmor{p,q}{V}{-},E\right).
\end{equation*}
Application of the Yoneda Lemma and the definitions of  $\ind_{0,0}^{p,q}$ and $\taupq E$ gives the desired fibre sequence.
\end{proof}

From this fibration sequence, one can see that if a functor $E$ is strongly $(p,q)$-polynomial, then $\ind_{0,0}^{p,q}E(V)$ is contractible. This is analogous to an $n$-polynomial function having zero $(n+1)^{\text{st}}$ derivative. 

\begin{cor}\label{poly implies ind contractible}
If $E\in\Ezero$ is strongly $(p,q)$-polynomial, then $\ind_{0,0}^{p,q}E$ and $\ind_{0,0}^{p,q}\varepsilon^*E$ are objectwise contractible.  
\end{cor}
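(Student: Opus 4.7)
The plan is to apply Lemma \ref{hofiblemma} directly. By Definition \ref{def: polynomial}, the assumption that $E$ is strongly $(p,q)$-polynomial means precisely that the natural map $\rho_{p,q}E : E \to \taupq E$ is an objectwise weak equivalence of $\C$-spaces. Plugging this into the homotopy fibre sequence of $\C$-spaces
$$\ind_{0,0}^{p,q} E(V) \longrightarrow E(V) \longrightarrow \taupq E(V)$$
supplied by Lemma \ref{hofiblemma}, the right-hand map is a $\C$-weak equivalence, so its homotopy fibre $\ind_{0,0}^{p,q}E(V)$ must be $\C$-equivariantly contractible. This handles the first statement with essentially no work beyond invoking the fibre sequence.

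For the second statement concerning $\ind_{0,0}^{p,q}\varepsilon^*E$, the plan is to reduce to the first case via the forgetful functor $\beta^*$. Recall from Definition \ref{def: obj WE} and Remark \ref{remark: model structure on semi direct} that weak equivalences in $O(p,q)\CE{p}{q}$ are tested by applying $\beta^*$ objectwise and checking weak equivalence of the underlying $\C$-spaces. Unwinding Definition \ref{induction definition} in the special case $p=q=0$, where (by the explicit description given there) $\CI_{0,0}^{p,q}$ equips a $\C$-space with the trivial $O(p,q)$-action, one identifies the underlying $\C$-space of $\ind_{0,0}^{p,q}\varepsilon^*E(V)$ with $\ind_{0,0}^{p,q}E(V)$: the $(O(p,q)\rtimes\C)$-equivariance condition on natural transformations reduces, after $\beta^*$, to the $\C$-equivariance condition defining $\ind_{0,0}^{p,q}E(V)$. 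Contractibility from the first half then carries over.

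The only step that demands attention is the identification of underlying $\C$-spaces in the second half, which is pure bookkeeping about how the inflation $\CI_{0,0}^{p,q}$ interacts with forgetting the $O(p,q)$-action on the space of natural transformations. I do not anticipate any genuine obstacle, since the substantive content — the fibre sequence relating $\ind_{0,0}^{p,q}$, the identity functor, and $\taupq$ — has already been supplied by Lemma \ref{hofiblemma}; the corollary is, in effect, the observation that a fibre sequence whose base map is a weak equivalence has contractible fibre.
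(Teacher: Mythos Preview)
Your proposal is correct and matches the paper's approach: the corollary is stated immediately after Lemma \ref{hofiblemma} with no separate proof, the paper having already remarked that the fibre sequence makes $\ind_{0,0}^{p,q}E(V)$ contractible when $E$ is strongly $(p,q)$-polynomial. Your treatment of the $\ind_{0,0}^{p,q}\varepsilon^*E$ case via $\beta^*$ is exactly the kind of bookkeeping the paper leaves implicit.
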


\subsection{Polynomial approximation}\label{sec: poly approx}

The partial sums of the Taylor series for a real function are known as the Taylor polynomials. These polynomial functions approximate the given function, and in general become better approximations as the degree of polynomial increases. In orthogonal calculus, Weiss defines a polynomial approximation functor $T_n$, see \cite[Theorem 6.3]{Wei95}. For an input functor $E$, each $T_nE$ is indeed an $n$-polynomial functor. In the $\C$-equivariant setting, we define an analogous functor $T_{p,q}$, and the $(p,q)$-polynomial approximation functor is given by the composition $T_{p+1,q}T_{p,q+1}$.

\begin{definition}\index{$\Tpq$}
Let $E\in \Ezero$, define the functor $\Tpq$ by
\begin{equation*}
    \Tpq E=\hocolim\left(E\overset{\rho_{p,q}}{\longrightarrow}\taupq E\overset{\rho_{p,q}}{\longrightarrow}\taupq^2 E\overset{\rho_{p,q}}{\longrightarrow}\dots\right),
\end{equation*}
\end{definition}
There is a natural transformation $\eta :E\rightarrow \Tpq E$.

\begin{ex}\label{exampleT10T01}
Let $E\in\Ezero$ be an input functor. Then
\begin{equation*}
    T_{1,0}E(V)=\hocolim_j \tau_{1,0}^j E(V)=\hocolim_j E(V\oplus \mathbb{R}^j),
\end{equation*}
\begin{equation*}
    T_{0,1}E(V)=\hocolim_j \tau_{0,1}^j E(V)=\hocolim_j E(V\oplus \mathbb{R}^{j\delta}),
\end{equation*}
and the strongly $(0,0)$-polynomial approximation is the constant functor $T_{0,0}E(V)=*$. 
\end{ex}

\begin{rem}
In orthogonal calculus, the $0$-polynomial approximation $T_0 F$ of an input functor $F\in\mathcal{E}_0$ is the constant functor taking value $F(\mathbb{R}^\infty)$. In particular, $T_0 F$ is a constant functor, but $T_{1,0} E$ and $T_{0,1}E$ are not in general. See Example \ref{ex: 00-poly approx} for the correct $\C$-analogue of $T_0 F$. 
\end{rem}

The functor $\Tpq E$ is strongly $(p,q)$-polynomial for all $E\in \Ezero$. To prove this, we need to generalise the erratum to orthogonal calculus \cite{Wei98} to the $\C$-equivariant setting. This is done over the following collection of lemmas. A formula similar to that used for the connectivity of $(\taupq s(W))^{\C}$ in Part 2 of the following lemma is used by Dotto in \cite[Corollary A.2]{Dot16b}. 
Recall that for a functor $G\in\Ezero$, the functor $\taupq G\in\Ezero$ is given by 
$$\taupq G(W)\cong \underset{0\neq U \subseteq \R^{p,q}}{\holim} \overline{G}(U\oplus W).$$ By abuse of notation, given $G\in \text{Fun}_{\TTop_*}(\Jzerobar,\C\TTop_*)$, we define the functor $\taupq G\in\Ezero$ by 
$$\taupq G(W):= \underset{0\neq U \subseteq \R^{p,q}}{\holim} G(U\oplus W).$$

Recall that a $\C$-space $Y$ is $v$-connected, where $v$ is a function from the set of conjugacy classes of closed subgroups of $\C$ to the integers, if each $X^H$ is a $v(H)$-connected space (see \cite[Definition 11.2.1]{May96}).

\begin{lem}\label{e.3}
Let $s:G\rightarrow F$ be a morphism in $\Fun_{\TTop_*}(\Jzerobar,\C\TTop_*)$ and $p,q\geq 0$. If there exists integers $b,c$ such that $s(W)$ is $v$-connected for all $W\in \Jzero$, where 
\begin{align*}
    v(e)&=(p+q)\Dim W -b\\
    v(\C)&=\Min\{(p+q)\Dim W -b, p\Dim W^{\C} + q\Dim (W^{\C})^\perp -c\},
\end{align*}
then $\taupq s(W)$ is $(v+1)$-connected.

\end{lem}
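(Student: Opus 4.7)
The plan is to prove the $v$-connectivity bound by reducing to a connectivity estimate on a total homotopy fibre, then arguing by induction on $p + q$. Since homotopy fibres commute with homotopy limits in $\C \TTop_*$, there is an equivalence
\[
\hofibre(\taupq s(W)) \simeq \underset{0 \neq U \subseteq \R^{p,q}}{\holim}\, \hofibre(s(U \oplus W)),
\]
so it suffices to show this total fibre has $\C$-equivariant connectivity at least $v$. For the underlying ($e$) direction, the bound is essentially the non-equivariant erratum of Weiss \cite{Wei98} applied after forgetting the $\C$-action: for each $U \subseteq \R^{p,q}$ of dimension $d \geq 1$, the hypothesis makes $\hofibre(s(U \oplus W))$ underlying $((p+q)(d + \Dim W) - b - 1)$-connected, and Weiss's refined connectivity estimate over the poset of non-zero subspaces of $\R^{p,q}$ upgrades this to total connectivity $v(e)$, hence $(v(e)+1)$-connectivity of the map $\taupq s(W)$ itself.

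For the $\C$-fixed direction, I would use that $\C$ acts freely on $P \setminus P^{\C}$ where $P = \{0 \neq U \subseteq \R^{p,q}\}$, together with the cosimplicial totalisation model of the homotopy limit (the paragraph preceding the statement), to identify
\[
(\taupq G(W))^{\C} \simeq \underset{0 \neq U \in P^{\C}}{\holim}\, \overline{G}(U \oplus W)^{\C}.
\]
By the Equivariant Splitting Theorem (Theorem \ref{splittingtheorems}), every $U \in P^{\C}$ decomposes uniquely as $U = U_1 \oplus U_2$ with $U_1 \subseteq \R^p$, $U_2 \subseteq \R^{q\delta}$, $(U_1, U_2) \neq (0,0)$, and under this decomposition $(U \oplus W)^{\C} = U_1 \oplus W^{\C}$, $((U \oplus W)^{\C})^{\perp} = U_2 \oplus (W^{\C})^{\perp}$. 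The second clause of the hypothesis therefore gives
\[
\conn\, \hofibre(s(U \oplus W))^{\C} \geq p\Dim U_1 + q\Dim U_2 + p\Dim W^{\C} + q\Dim (W^{\C})^{\perp} - c - 1.
\]
The induction on $p + q$ then proceeds via the cofibre sequences of Proposition \ref{cofibseq}: the $\R^{\alpha} = \R$ sequence peels off one dimension from the $\R^p$-part (contributing to the $p\Dim W^{\C}$ bookkeeping) and the $\R^{\alpha} = \Rdelta$ sequence peels off one dimension from the $\R^{q\delta}$-part (contributing to $q\Dim (W^{\C})^{\perp}$), with base cases $(p,q) = (1,0)$ and $(0,1)$ where the indexing poset has a single object and the statement is immediate from the hypothesis.

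The main obstacle is the fixed-point bookkeeping. A naive application of the $\holim$ connectivity estimate that loses $\dim P^{\C} = p+q-1$ of connectivity across the board is too crude, because the minimum of the fixed-point bound over $P^{\C}$ is realised at a one-dimensional $U$ (contributing only $\min(p,q)$, not $p+q$), so a single global dimension-loss estimate is not sharp enough to recover $v(\C) + 1$. To get the sharp bound, one must exploit the refined Weiss estimate separately along the two directions, using the splitting theorem to ensure that each application of a cofibre sequence from Proposition \ref{cofibseq} in the $\R$-direction shifts the bound by the correct rate $p$, and likewise each $\Rdelta$-step by $q$. Organising the induction so that these two independent rates match the corresponding extensions of chains in $P^{\C}$ is the step that will require the most care; everything else is a direct equivariant lift of the non-equivariant erratum.
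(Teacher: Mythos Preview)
Your proposal contains a genuine gap at the key step for the $\C$-fixed direction. The claimed identification
\[
(\taupq G(W))^{\C} \simeq \underset{0 \neq U \in P^{\C}}{\holim}\, \overline{G}(U \oplus W)^{\C}
\]
is false: fixed points of a homotopy limit over a $\C$-poset do \emph{not} reduce to the homotopy limit over the fixed subposet of the fixed-point functor. Already for the free $\C$-set $\{a,b\}$ one has $(\holim F)^{\C}\cong F(a)$ while $\holim_{\emptyset}=\ast$. In the totalisation model the correct description of $(\Gamma(e_k^*\xi))^{\C}$ is the space of $\C$-\emph{equivariant} sections, and over a free $\C$-orbit in the flag space such a section is determined by an \emph{arbitrary} point of the fibre, not a fixed point. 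Hence the free part of $P$ still contributes, and its contribution is governed by the underlying connectivity $v(e)$; this is exactly why the paper's bound for $(\taupq s(W))^{\C}$ is a minimum over \emph{all} $H\leq\C$ of $\conn(s(L(k)\oplus W)^H)-\Dim(C(\lambda)^H)-k$, and why the hypothesis on $v(\C)$ already carries the $\min$ with $(p+q)\Dim W-b$. You cannot simply discard the non-fixed part of $P$.

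There is a second problem: your proposed induction via Proposition~\ref{cofibseq} is misapplied. That cofibre sequence relates the jet morphism spaces $\Jmor{W}{U}{V}$ for varying $W$, not the sphere bundles $S\Gmor{p,q}{V}{-}$ or the functor $\taupq$, and it does not give an inductive decomposition of the poset $P=\{0\neq U\subseteq\R^{p,q}\}$ in terms of the posets for $\R^{p-1,q}$ or $\R^{p,q-1}$. The paper does not induct on $p+q$ at all; instead it works directly with the cosimplicial flag-space model, computes $\Dim C(\lambda)<(p+q)\lambda(k)-k$ and, via the Splitting Theorem, $\Dim C(\lambda)^{\C}<p\lambda(k)^{\C}+q(\lambda(k)^{\C})^\perp-k$, and then feeds these into the section-space connectivity estimate $\conn(\TTop_*(A,B)^G)>\min_{H\leq G}\{\conn B^H-\Dim A^H\}$. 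That flag-manifold dimension count is the mechanism that produces the sharp rates $p$ and $q$ you were hoping to get from an induction, and it does so in one step without any recursive bookkeeping.
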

\begin{proof}
Let $\mathcal{D}$ denote the topological poset of non-zero linear subspaces of $\R^{p+q\delta}$. Similar to \cite[Lemma e.3]{Wei98}, the homotopy limit in $\taupq s(W)$ is the totalization of a cosimplicial object as follows. For $Z:\mathcal{D}\rightarrow \TTop_*$, the homotopy limit of $Z$ is the totalization of the cosimplicial space $$ [k]\mapsto \prod\limits_{L:[k]\rightarrow \mathcal{D}} Z(L(K))$$ taken over all monotone injections $[k]\rightarrow \mathcal{D}$. In particular, we are interested in the cases where $Z(U):= G(U\oplus W)$ and $Z(U):=F(U\oplus W)$.

Note that $\mathcal{D}$ is a category internal to $\C$-spaces. The space of objects is given by a disjoint union of $\C$-Grassmann manifolds 
\begin{equation*}
    \coprod\limits_{0\leq i\leq p+q} \mathcal{L}(\mathbb{R}^i,\mathbb{R}^{p+q\delta}) / O(i)
\end{equation*}
and the space of morphisms is the space of flags of subspaces of $\mathbb{R}^{p+q\delta}$ of length two
\begin{equation*}
    \coprod\limits_{0\leq i\leq j\leq p+q} \mathcal{L}(\mathbb{R}^j,\mathbb{R}^{p+q\delta}) / O(j-i)\times O(i),
\end{equation*}
where $\C$ acts by conjugation. 

To capture this structure, we can replace the cosimplicial space above by another. Let $\xi$ be the fibre bundle over $\mathcal{D}$, 
\begin{equation*}
    \coprod\limits_{0\leq i\leq p+q} Z(\R^i)\times_{O(i)}\LL(\R^i,\R^{p,q})\overset{\text{proj}}{\rightarrow} \coprod\limits_{0\leq i\leq p+q} \mathcal{L}(\mathbb{R}^i,\mathbb{R}^{p+q\delta}) / O(i)=\text{ob}(D),
\end{equation*}
such that the fibre of $U\in\mathcal{D}$ is $Z(U)$. Let $e_k: \mathcal{C}\rightarrow \mathcal{D}$ be defined by $L\mapsto L(k)$, where $\mathcal{C}$ is the $\C$-space of monotone injections $[k]\rightarrow \mathcal{D}$. We can replace the previous cosimplicial space by $$[k]\mapsto \Gamma (e_k^*\xi),$$ where $e_k^*\xi$ is the pullback bundle over $\mathcal{C}$ and $\Gamma$ denotes taking the section space. 

The space $\mathcal{C}$, of monotone injections $[k]\rightarrow \mathcal{D}$, is the disjoint union of $\C$-manifolds $C(\lambda)$, taken over monotone injections $\lambda:[k]\rightarrow [p+q]$ that avoid $0\in [p+q]$. These manifolds are defined by $$C(\lambda)=\{L:[k]\rightarrow\mathcal{D} : \Dim(L(i))=\lambda(i),\space \forall i\}.$$
That is, $C (\lambda)$ is the space of all flags of length $k$ and weight $\lambda$. Writing this as a quotient of orthogonal groups (where $\lambda_i=\lambda(i)$)
\begin{equation*}
    C(\lambda)= \coprod\limits_{0\leq \lambda_0\leq ...\leq \lambda_k\leq p+q} \mathcal{L}(\mathbb{R}^{\lambda_k},\mathbb{R}^{p+q\delta}) / O(\lambda_k-\lambda_{k-1})\times O(\lambda_{k-1}-\lambda_{k-2})\times ...\times O(\lambda_{0}),
\end{equation*}
one can calculate the dimension of $C(\lambda)$. 
\begin{align*}
    \Dim(C(\lambda))&=((p+q)-\lambda(k))\lambda(k) +\sum\limits_{i=0}^{k-1} (\lambda(i+1)-\lambda(i))\lambda(i)\\
    &= (p+q)\lambda(k) +\sum\limits_{i=0}^{k-1}\lambda(i)\lambda(i+1) -\sum\limits_{i=0}^{k}\lambda(i)^2\\
    &< (p+q)\lambda(k) -k
\end{align*}

Using the discussion of the homotopy limit as the totalization above, we see that the connectivity of $(\taupq s(W))^e$ is greater than or equal to the minimum of $$\conn(s(L(k)\oplus W))-\Dim((C(\lambda)) -k$$ taken over triples $(L,\lambda,k)$ with $L\in C(\lambda)$ and $\lambda:[k]\rightarrow [p+q]$. Substituting in the hypothesis on the connectivity of $s(L(k)\oplus W)$ and the bound on the dimension of $C(\lambda)$ yields that the connectivity of $(\taupq s(W))^e$ is at least $v(e)+1$. 

Note that for $G$-spaces $A,B$   
$$\conn (\TTop_*(A,B)^G)>\underset{H\leq G}{\text{min}}\{\conn B^H-\Dim A^H\}$$
taken over closed subgroups $H$ of $G$. Using this, along with the fact that fixed points commute with totalization, we see that the connectivity of $(\taupq s(W))^{\C}$ is greater than or equal to the minimum of 
\begin{equation*}
    \underset{H\leq \C}{\text{min}}\{\conn(s(L(k)\oplus W)^{H})-\Dim((C(\lambda)^{H}) -k\}
\end{equation*}
taken over triples $(L,\lambda,k)$ with $L\in C(\lambda)$ and $\lambda:[k]\rightarrow [p+q]$. A similar formula is used by Dotto in \cite[Corollary A.2]{Dot16b}.

One can determine $C(\lambda)^{\C}$ by applying the splitting theorem, Theorem \ref{splittingtheorems}, to the definition of $C(\lambda)$. Then a calculation similar to that above for $\Dim(C(\lambda))$ shows that $$\Dim(C(\lambda)^{\C}) < p\lambda(k)^{\C} +q(\lambda(k)^{\C})^\perp -k,$$ where $\lambda(k)^{\C}= \Dim(L(k)^{\C})$ and $(\lambda(k)^{\C})^{\perp}= \Dim((L(k)^{\C})^{\perp})$. The result then follows by substituting in the hypothesis on the connectivity of $s(L(k)\oplus W)^{\C}$ as we did for the first result above. 
\end{proof}

The following corollary can be proved using the same method as Lemma \ref{e.3}. 

\begin{cor}\label{erratum corollary}Let $s:G\rightarrow F$ be a morphism in $\Fun_{\TTop_*}(\Jzerobar,\C\TTop_*)$ and $p,q\geq 1$.
\begin{enumerate}
    \item If there exists integers $b,c$ such that $s(W)$ is $v$-connected for all $W\in \Jzero$, where 
        \begin{align*}
            v(e)&=2(p+q)\Dim W -b\\
            v(\C)&=\Min\{2(p+q)\Dim W -b, 2p\Dim W^{\C} + 2q\Dim (W^{\C})^\perp -c\},
        \end{align*}
        then $\tau_{p+1,q}\tau_{p,q+1} s(W)$ is at least $(v+1)$-connected.
    \item If there exists integers $b,c$ such that $s(W)$ is $v$-connected for all $W\in \Jzero$, where 
        \begin{align*}
            v(e)&=2(p+q)\Dim W -b\\
            v(\C)&=\Min\{2(p+q)\Dim W -b, (p+q)\Dim W -c\},
        \end{align*}
        then $\tau_{p+1,q}\tau_{p,q+1} s(W)$ is at least $(v+1)$-connected.
\end{enumerate}
\end{cor}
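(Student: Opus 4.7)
The plan is to iterate the argument of Lemma \ref{e.3} twice, first with parameters $(p, q+1)$ and then with $(p+1, q)$. The cosimplicial model of $\taupq$ as a totalization, the dimension estimates for $C(\lambda)$ and $C(\lambda)^{\C}$, and the standard connectivity bound on a totalization of $\C$-spaces all carry over unchanged from the proof of Lemma \ref{e.3}; only the bookkeeping of constants needs to be redone.

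First I would apply the totalization estimate from the proof of Lemma \ref{e.3} with parameters $(p, q+1)$ to $s$: for each closed subgroup $H \leq \C$,
\begin{equation*}
    \conn\bigl((\tau_{p,q+1} s)^H(W)\bigr) \geq \min_{(L, \lambda, k)} \bigl\{\conn(s^H(L(k) \oplus W)) - \Dim C(\lambda)^H - k\bigr\},
\end{equation*}
taken over triples $(L,\lambda,k)$ with $\lambda : [k] \to [p+q+1]$ avoiding $0$ and $L \in C(\lambda)$. Plugging in the hypothesis on $\conn(s^H)$ together with the bounds $\Dim C(\lambda) < (p+q+1)\lambda(k) - k$ and $\Dim C(\lambda)^{\C} < p\lambda(k)^{\C} + q(\lambda(k)^{\C})^\perp - k$, and using $\lambda(k) \geq 1$, yields explicit lower bounds on $\conn((\tau_{p,q+1}s)^H(W))$ that are still linear in $\Dim W$ with the \emph{same} leading coefficients as in the hypothesis (only the constant offsets $b, c$ decrease). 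These bounds are then fed into the same estimate with $(p+1, q)$ to bound $\conn((\tau_{p+1,q}\tau_{p,q+1} s)^H(W))$.

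The doubled leading coefficients in the hypothesis, namely $2(p+q)$ on the $e$-connectivity in both parts, and $2p, 2q$ in Part (1) or $(p+q)$ in Part (2) on the $\C$-connectivity, are exactly what makes this iteration succeed: each coefficient meets or exceeds the corresponding coefficient appearing in the dimension bound on $C(\lambda)^H$, so each round of totalization preserves the leading coefficient while shifting the constant by a negative amount, and the accumulated shift over two rounds is more than the $+1$ required by the conclusion.

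The main obstacle is the bookkeeping for the $\C$-fixed part in Part (2), where the hypothesis $v(\C) \geq (p+q)\Dim W - c$ does not split $\Dim W$ into its fixed and non-fixed summands. This still works because the crude bound $\Dim C(\lambda)^{\C} \leq (p+q)\lambda(k) - k - 1$, which follows from the observation $p\lambda(k)^{\C} + q(\lambda(k)^{\C})^\perp \leq (p+q)\lambda(k)$, matches the coefficient $(p+q)$ in the hypothesis exactly; consequently each application of $\tau$ still contributes at least $+1$ to connectivity at $\C$, so two applications yield the claimed $(v+1)$-connectivity.
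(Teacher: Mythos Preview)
Your approach—iterating the totalization/connectivity estimate from the proof of Lemma \ref{e.3}, once for $\tau_{p,q+1}$ and once for $\tau_{p+1,q}$—is exactly what the paper means by ``the same method as Lemma \ref{e.3}'', and the strategy is sound.

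There is one genuine bookkeeping error in your handling of Part (2). The crude bound you invoke, $\dim C(\lambda)^{\C} \leq (p+q)\lambda(k)-k-1$, is the $(p,q)$ bound from Lemma \ref{e.3}; but in the iteration you are applying $\tau_{p,q+1}$ and $\tau_{p+1,q}$, whose fixed-point dimension estimates have coefficients $(p,\,q{+}1)$ and $(p{+}1,\,q)$ respectively. Cruding those out gives $\dim C(\lambda)^{\C}\leq (p+q+1)\lambda(k)-k-1$, which exceeds the coefficient $(p+q)$ in the hypothesis, so the crude bound actually \emph{loses} connectivity rather than gaining it. The remedy is not to crude out at all: keep the split $\lambda(k)=\lambda(k)^{\C}+(\lambda(k)^{\C})^{\perp}$ and compare termwise. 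For $\tau_{p,q+1}$ one finds
\[
(p+q)\lambda(k)-\bigl[p\,\lambda(k)^{\C}+(q{+}1)(\lambda(k)^{\C})^{\perp}\bigr]=q\,\lambda(k)^{\C}+(p{-}1)(\lambda(k)^{\C})^{\perp}\geq 0
\]
since $p,q\geq 1$, and the strict inequality in the dimension estimate supplies the $+1$; for $\tau_{p+1,q}$ the analogous surplus is $(q{-}1)\lambda(k)^{\C}+p(\lambda(k)^{\C})^{\perp}\geq 0$. With this correction your iteration goes through.
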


\begin{rem}
    Let $F,G\in\Ezero$. Recall that $\overline{G},\overline{F}$ are the right Kan extensions of $F,G$ respectively along the inclusion $\Jzero\rightarrow\Jzerobar$. If the map $s(W):F(W)\rightarrow G(W)$ is $v$-connected for all $W\in\Jzero$, then the map $\overline{s}(V):\overline{F}(V)\rightarrow \overline{G}(V)$ is $v$-connected for all $V\in\Jzerobar$. In particular, this means that given the connectivity of the map $s(W)$, Lemma \ref{e.3} can be applied to the map $\overline{s}(V)$ to make conclusions about the connectivity of the map $\taupq s(W)=\taupq \overline{s}(W)$.
\end{rem}

The following Lemma is a $\C$-version of \cite[Lemma e.7]{Wei98}. 

\begin{lem}\label{e.7}
Let $G:= S\Gmor{p,q}{V}{-}$, $F:=\Jzero(V,-)$, and let $s:G\rightarrow F$ be the projection sphere bundle map. $\Tpq s$ is an objectwise weak equivalence. 
\end{lem}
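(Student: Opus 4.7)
The plan is to bound the equivariant connectivity of the sphere bundle projection $s(W)$, iterate Lemma~\ref{e.3} to push this connectivity arbitrarily high, and then pass to the homotopy colimit.

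First I would check that $s(W)\colon S\Gmor{p,q}{V}{W}_+ \to \Jzero(V,W) = \LL(V,W)_+$ is (away from the disjoint basepoint) the projection of a sphere bundle whose fibre over $f \in \LL(V,W)$ is $S(\R^{p,q}\otimes f(V)^\perp)$, a sphere of real dimension $(p+q)(\Dim W - \Dim V) - 1$. Hence $s(W)$ is $v(e)$-connected for $v(e) = (p+q)\Dim W - b$, where $b = (p+q)\Dim V + 2$. For the $\C$-fixed points, the Splitting Theorem (Theorem~\ref{splittingtheorems}) identifies the fibre over a $\C$-fixed isometry $f = f_1 \times f_2$ with the sphere of $(\R^p \otimes f_1(V^{\C})^\perp)\oplus(\R^{q\delta}\otimes f_2((V^{\C})^{\perp})^\perp)$, whose real dimension is $p\Dim W^{\C} + q\Dim(W^{\C})^\perp - c$ for $c = p\Dim V^{\C} + q\Dim(V^{\C})^\perp + 2$. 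Taking the minimum of these two bounds places $s$ precisely in the form required by Lemma~\ref{e.3}.

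Next I would apply Lemma~\ref{e.3} inductively. Since each application of $\taupq$ increments the connectivity by $1$ while preserving the form of $v$ with fixed constants $b, c$, an easy induction gives that $\taupq^j s(W)$ is $(v+j)$-connected on both $e$- and $\C$-fixed points, for every $W \in \Jzero$ and every $j \geq 0$.

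Finally, $\Tpq s(W) = \hocolim_j \taupq^j s(W)$. For any closed subgroup $H \leq \C$ and any integer $k$, the map $\taupq^j s(W)$ is $k$-connected on $H$-fixed points once $j$ is large enough. Since sequential homotopy colimits commute with homotopy groups, the colimit induces isomorphisms on $\pi_k^H$ for all $k$ and all $H$, so $\Tpq s(W)$ is an equivariant weak equivalence at every $W$. The main subtlety will be identifying the $\C$-fixed connectivity via the splitting theorem so that it matches the exact form of $v(\C)$ in Lemma~\ref{e.3}; all other steps are essentially the $\C$-equivariant transcription of \cite{Wei98}.
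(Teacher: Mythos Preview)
Your proposal is correct and follows essentially the same argument as the paper: verify the connectivity hypothesis of Lemma~\ref{e.3} for $s(W)$ using the sphere-bundle description (and the Splitting Theorem for the $\C$-fixed part), iterate to push connectivity to infinity, and pass to the homotopy colimit. The only discrepancy is that the paper takes $b=(p+q)\Dim V+1$ and $c=p\Dim V^{\C}+q\Dim(V^{\C})^{\perp}+1$ rather than your $+2$, but this off-by-one is harmless since any fixed constants suffice for the iteration.
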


\begin{proof}
The map $s(W)$ satisfies the hypothesis of Lemma \ref{e.3} with 
\begin{align*}
    b&=(p+q)\Dim V+1\\
    c&=p\Dim V^{\C}+q\Dim (V^{\C})^{\perp}+1.
\end{align*}
Repeated application of Lemma $\ref{e.3}$ shows that the connectivity of both $(\taupq ^l s(W))^e$ and $(\taupq ^l s(W))^{\C}$ tend to infinity as $l$ tends to infinity. Thus, $(\Tpq s(W))^e$ and $(\Tpq s(W))^{\C}$ are weak homotopy equivalences, which is exactly that $\Tpq s$ is an objectwise weak equivalence. 
\end{proof}

The following Theorem is the statement that $\Tpq E$ is indeed strongly $(p,q)$-polynomial. The proof follows in the same way as the underlying statement \cite[Theorem 6.3.1]{Wei98}, using Lemma \ref{e.3} and Lemma \ref{e.7} in place of their non-equivariant counterparts. 

\begin{thm}\label{weiss6.3.1}
$\Tpq E$ is strongly $(p,q)$-polynomial for all $E\in\Ezero$ and all $p,q\geq 0$.
\end{thm}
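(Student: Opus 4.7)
I would follow the non-equivariant strategy of \cite[Theorem 6.3.1]{Wei98}, with Lemmas \ref{e.3} and \ref{e.7} serving as the $\C$-equivariant replacements for their orthogonal counterparts. The goal is to show that for each $V \in \Jzero$, the natural map
$$\rho_{p,q}(T_{p,q}E)(V) \colon T_{p,q}E(V) \longrightarrow \taupq T_{p,q}E(V)$$
is a weak equivalence of pointed $\C$-spaces, equivalently that $\rho_{p,q}(T_{p,q}E)$ is an objectwise weak equivalence in $\Ezero$.

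\textbf{Main steps.} Unfolding $T_{p,q} E = \hocolim_k \taupq^k E$ and applying the telescope shift $\hocolim_k \taupq^k E \simeq \hocolim_k \taupq^{k+1} E$, it suffices to construct a natural $\C$-weak equivalence
$$\hocolim_k \taupq^{k+1} E \xrightarrow{\,\simeq\,} \taupq \, \hocolim_k \taupq^k E,$$
that is, to commute $\taupq$ past the sequential hocolim along the structure maps $\rho_{p,q}$. For this I would apply Lemma \ref{e.3} iteratively to the sphere bundle projection $s_V \colon S\Gmor{p,q}{V}{-}_+ \to \Jzero(V,-)$. Because $\rho_{p,q}(F)(V) = \Nat_{0,0}(s_V, F)$ by the definition of $\taupq$ and the Yoneda lemma, and because $s_V$ is a map between projectively cofibrant objects by Lemma \ref{sphereandmorpharecofibrant}, the connectivity estimates of Lemma \ref{e.3} transfer through $\Nat_{0,0}(-, E)$ to show that the iterated structure maps $\rho_{p,q}(\taupq^k E)$ become arbitrarily highly connected on both underlying spaces and $\C$-fixed points. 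Lemma \ref{e.7}, which records precisely that $T_{p,q} s_V$ is an objectwise weak equivalence, then packages this connectivity growth into the desired commutation, and the conclusion follows as in the non-equivariant proof.

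\textbf{Main obstacle.} The heart of the argument is this commutation step: $\taupq$ is built from a homotopy limit over the topological poset of non-zero subspaces of $\R^{p,q}$, and such limits do not commute with sequential hocolims in general. In the non-equivariant setting Weiss resolves the issue by tracking a single connectivity bound. In the $\C$-equivariant setting one must bound connectivity separately on underlying spaces and on $\C$-fixed points, because for $U$ a non-zero subspace of $\R^{p,q}$ the dimension of $(U \oplus W)^{\C}$ is not determined by $\Dim U$ alone --- it depends on how $U$ decomposes into trivial and sign summands. This is precisely why Lemma \ref{e.3} carries the two-part bound $v(e)$ and $v(\C)$, and why its proof relies on the Equivariant Splitting Theorem (Theorem \ref{splittingtheorems}). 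Managing the interaction between these two bounds under iteration of $\taupq$, so that $(T_{p,q} s_V)^e$ and $(T_{p,q} s_V)^\C$ are simultaneously weak equivalences, is the real technical content; once that is in hand, the present theorem reduces to translating the weak equivalence $T_{p,q} s_V$ back to a weak equivalence of the associated $\rho_{p,q}$ maps via the Yoneda identification.
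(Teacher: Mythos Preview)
Your proposal is correct and matches the paper's own approach: the paper simply states that the proof follows the non-equivariant argument of \cite[Theorem 6.3.1]{Wei98}, with Lemma \ref{e.3} and Lemma \ref{e.7} substituted for their orthogonal counterparts, and your elaboration of that strategy (reducing to the commutation of $\taupq$ with the sequential hocolim, controlled via the connectivity estimates for $s_V$) is exactly the content of Weiss's erratum argument.
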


The following is the $\C$-equivariant generalisation of \cite[Theorem 6.3.2]{Wei95}. The lemma demonstrates another property that one might expect strongly $(p,q)$-polynomial functors to satisfy based on the properties of polynomial functions. That is, the strongly $(p,q)$-polynomial approximation of a strongly $(p,q)$-polynomial functor is the functor itself. 

\begin{lem}\label{weiss6.3.2}
If $E$ is strongly $(p,q)$-polynomial, then $\eta: E\rightarrow\Tpq E$ is an objectwise weak equivalence. 
\end{lem}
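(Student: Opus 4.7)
The plan is to argue that $\eta \colon E \to \Tpq E$ is an objectwise weak equivalence by showing that every map in the telescope defining $\Tpq E$ is already an objectwise weak equivalence, so that the hypothesis transfers all the way up the sequential homotopy colimit.

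First I would reduce the statement to two claims: (i) the endofunctor $\taupq$ on $\Ezero$ preserves objectwise weak equivalences, and (ii) sequential $\hocolim$ in $\C\TTop_*$ preserves objectwise weak equivalences of diagrams. Granting these, I argue by induction on $n$ that each natural map
\begin{equation*}
\rho_{p,q}\bigl(\taupq^n E\bigr) \colon \taupq^n E \longrightarrow \taupq^{n+1} E
\end{equation*}
is an objectwise weak equivalence. The base case $n=0$ is the hypothesis that $E$ is strongly $(p,q)$-polynomial. For the inductive step, naturality of $\rho_{p,q}$ applied to $\rho_{p,q}(E) \colon E \to \taupq E$ yields a commuting square whose top and left edges are $\rho_{p,q}(E)$, whose right edge is $\taupq(\rho_{p,q}(E))$, and whose bottom edge is $\rho_{p,q}(\taupq E)$. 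By (i), $\taupq$ sends the weak equivalence $\rho_{p,q}(E)$ to a weak equivalence, so three of the four edges are weak equivalences, which by the two-out-of-three property forces the fourth, $\rho_{p,q}(\taupq E)$, to be one as well. Iterating this argument gives the claim for all $n$. Then by (ii), the canonical map from the initial term $E$ into the telescope $\hocolim_n \taupq^n E = \Tpq E$ is an objectwise weak equivalence, which is exactly $\eta$.

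The main obstacle is claim (i), since (ii) is standard for sequential homotopy colimits computed via mapping telescopes in $\C\TTop_*$. For (i) I would use the description
\begin{equation*}
\taupq E(W) \;\cong\; \underset{0\neq U \subseteq \R^{p,q}}{\holim}\; \overline{E}(U\oplus W)
\end{equation*}
established earlier. Since right Kan extension along the fully faithful inclusion $\Jzero \hookrightarrow \Jzerobar$ is computed as the end $\int_{V} \TTop_*(\Jzerobar(-,V), E(V))$, and $\TTop_*(A,-)$ preserves weak equivalences of $\C$-spaces for any $A$ (all objects of $\C\TTop_*$ being fibrant in the fine model structure), an objectwise weak equivalence $E \to F$ induces a weak equivalence $\overline{E}(X) \to \overline{F}(X)$ at every $X \in \Jzerobar$. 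The homotopy limit over the finite topological poset of non-zero subspaces of $\R^{p,q}$ then preserves this levelwise weak equivalence, by the usual cosimplicial-totalisation presentation used in Lemma~\ref{e.3}. Alternatively, one can argue directly from $\taupq E(W) = \Nat_{0,0}(S\Gmor{p,q}{W}{-}_+, E)$ together with Lemma~\ref{sphereandmorpharecofibrant}: since $S\Gmor{p,q}{W}{-}_+$ is cofibrant in the projective model structure on $\Ezero$ and every object of $\Ezero$ is fibrant (weak equivalences and fibrations being objectwise), the enriched functor $\Nat_{0,0}(S\Gmor{p,q}{W}{-}_+, -)$ is a right Quillen functor and in particular preserves all objectwise weak equivalences between fibrant objects.

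With (i) and (ii) in hand, the proof concludes in one line: $\eta$ is the transfinite composition of a tower of objectwise weak equivalences, hence an objectwise weak equivalence.
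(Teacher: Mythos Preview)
Your proof is correct and follows essentially the same strategy as the paper: use the hypothesis to start, propagate along the tower, and pass to the sequential homotopy colimit. The paper's proof is a two-sentence sketch that jumps directly from ``$\rho_{p,q}$ is an objectwise weak equivalence'' to ``$\eta$ is an objectwise weak equivalence,'' noting only that fixed points commute with sequential homotopy colimits; it leaves entirely implicit the inductive step showing that every map $\taupq^n E \to \taupq^{n+1} E$ is a weak equivalence, which is precisely your claim (i) plus the naturality-square argument. Your justification of (i) via the cofibrancy of $S\Gmor{p,q}{W}{-}_+$ (Lemma~\ref{sphereandmorpharecofibrant}) together with all objects being fibrant in the projective model structure is the cleanest route and is exactly what the paper would need to fill its gap. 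One minor phrasing point: calling $\Nat_{0,0}(S\Gmor{p,q}{W}{-}_+,-)$ a ``right Quillen functor'' is slightly loose; what you actually use is that it preserves weak equivalences between fibrant objects (Ken Brown), which suffices since every object of $\Ezero$ is fibrant.
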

\begin{proof}
If $E$ is strongly $(p,q)$-polynomial, then by definition $\rho^H:E(V)^H\rightarrow (\taupq E(V))^H$ is a weak homotopy equivalence, for all $V\in \Jzero$ and all $H$ closed subgroups of $\C$. Therefore, the map $E(V)^H\rightarrow (\hocolim_k \taupq^k E(V))^H$ is a weak homotopy equivalence, since fixed points commute with sequential homotopy colimits, which is exactly the map $\eta$. 
\end{proof}

Combining Lemma \ref{weiss6.3.2} with Theorem \ref{weiss6.3.1} gives the following Corollary. 

\begin{cor}\label{cor: TpqTpq we to Tpq}
Let $E\in\Ezero$, then $\Tpq E$ is objectwise weakly equivalent to $\Tpq \Tpq E$. 
\end{cor}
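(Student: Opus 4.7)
The plan is to apply the two preceding results in sequence. By Theorem \ref{weiss6.3.1}, the functor $\Tpq E$ is strongly $(p,q)$-polynomial. Since Lemma \ref{weiss6.3.2} asserts that the natural map $\eta\colon F\rightarrow \Tpq F$ is an objectwise weak equivalence whenever $F$ is strongly $(p,q)$-polynomial, applying this to $F = \Tpq E$ immediately yields that
\begin{equation*}
    \eta\colon \Tpq E \rightarrow \Tpq \Tpq E
\end{equation*}
is an objectwise weak equivalence, which is the desired conclusion.

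There is no real obstacle here; the corollary is a formal consequence of the two results just proved, paralleling the situation in ordinary orthogonal calculus where idempotence of the polynomial approximation up to weak equivalence follows from the fact that $T_n F$ is $n$-polynomial and from the fact that the unit is a weak equivalence on polynomial functors. The only point worth checking is that the $\eta$ appearing in Lemma \ref{weiss6.3.2} is indeed the natural transformation arising from the definition of $\Tpq$ as a sequential homotopy colimit, so that applying it to $\Tpq E$ produces the map $\Tpq E \rightarrow \Tpq \Tpq E$ under consideration, rather than some different comparison map. This is immediate from the construction.
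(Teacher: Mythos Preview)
Your proof is correct and matches the paper's approach exactly: the paper simply states that the corollary follows by combining Theorem \ref{weiss6.3.1} (which gives that $\Tpq E$ is strongly $(p,q)$-polynomial) with Lemma \ref{weiss6.3.2} (applied to $F=\Tpq E$).
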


The following Lemma is a $\C$-equivariant version \cite[Lemma 5.11]{Wei95}. It says that $\tau_{l,m}$ preserves strongly $(p,q)$-polynomial functors. 
\begin{lem}\label{lem: 5.11}
If $E$ is strongly $(p,q)$-polynomial, then so is $\tau_{l,m}E$ for all $l,m\geq 0$. 
\end{lem}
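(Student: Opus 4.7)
The plan is to imitate the non-equivariant argument by exploiting the homotopy-limit description $\taupq E(V) \cong \holim_{0\neq U\subseteq \R^{p,q}} \overline{E}(U\oplus V)$ together with a Fubini-style interchange of iterated homotopy limits. First I would show that $\tau_{l,m}$ preserves objectwise weak equivalences in $\Ezero$. Via the totalization-of-a-cosimplicial-space presentation used in the proof of Lemma \ref{e.3}, $\tau_{l,m}$ is assembled from indexed products, totalizations, and cotensors of the form $\TTop_*(A,-)$ for $\C$-CW complexes $A$; each of these constructions preserves fine weak equivalences of $\C$-spaces on pointwise fibrant diagrams, and the right Kan extension $\overline{(-)}$ along $\Jzero\hookrightarrow\Jzerobar$ behaves well with respect to objectwise weak equivalences for the same reason (cofibrantly replacing $E$ if necessary, as in the argument of Lemma \ref{sphereandmorpharecofibrant}).

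Next I would establish a natural $\C$-equivariant weak equivalence $\tau_{l,m}\taupq E\simeq \taupq\tau_{l,m}E$. Writing out both sides via the $\holim$ description yields the iterated double homotopy limit
\begin{equation*}
\tau_{l,m}\taupq E(V) \;\cong\; \underset{0\neq U'\subseteq \R^{l,m}}{\holim}\,\underset{0\neq U\subseteq \R^{p,q}}{\holim}\, \overline{E}(U\oplus U' \oplus V),
\end{equation*}
and the Fubini theorem for homotopy limits produces a natural weak equivalence with the analogous expression in which the two $\holim$s are interchanged, which after reidentification computes $\taupq\tau_{l,m}E(V)$. Applying $\tau_{l,m}$ to the hypothesised weak equivalence $\rho_{p,q}E:E\to\taupq E$ and composing with this Fubini comparison gives a weak equivalence $\tau_{l,m}E\to \taupq\tau_{l,m}E$. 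By naturality of $\rho_{p,q}$ (it is induced from the sphere-bundle projection $S\Gmor{p,q}{V}{-}_+\to \Jzero(V,-)$ plugged into the first variable of $\Nat_{0,0}(-,-)$), this composite agrees up to homotopy with $\rho_{p,q}(\tau_{l,m}E)$, which is exactly what is needed to conclude that $\tau_{l,m}E$ is strongly $(p,q)$-polynomial.

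The main obstacle is ensuring that everything genuinely runs $\C$-equivariantly in the fine model structure rather than just underlying. Concretely, one has to verify that $\C$-fixed points commute with the totalizations and indexed products modelling the homotopy limits, and that the Fubini interchange respects the $\C$-actions on the indexing posets (which are induced from the $\C$-representation structure on $\R^{p,q}$ and $\R^{l,m}$). Both issues are already handled implicitly in the proof of Lemma \ref{e.3}, where the cosimplicial model of the relevant $\holim$ was used to compute connectivities of $\C$-fixed points directly, and the same mechanism carries over here to turn a Fubini swap in underlying spaces into one in $\C$-spaces.
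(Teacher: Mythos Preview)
Your proposal is correct and follows essentially the same approach as the paper: the paper's proof is the two-line argument ``homotopy limits commute and $\tau_{l,m}$ preserves objectwise weak equivalences, so $\tau_{p,q}\tau_{l,m}E\simeq\tau_{l,m}\tau_{p,q}E\simeq\tau_{l,m}E$''. Your write-up simply unpacks these two assertions in more detail (the Fubini interchange via the cosimplicial model, and the identification of the resulting composite with $\rho_{p,q}(\tau_{l,m}E)$), which the paper leaves implicit.
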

\begin{proof}
Since homotopy limits commute and $\tau_{l,m}$ preserves objectwise weak equivalences we get the following. 
\begin{align*}
    \tau_{p,q}\tau_{l,m}E(V) &= \tau_{l,m}\taupq E(V)\\
    &\simeq \tau_{l,m}E(V)\qedhere
\end{align*}
\end{proof}

\begin{cor}\label{Tlm is pq poly if E is}
If $E$ is strongly $(p,q)$-polynomial, then so is $T_{l,m}E$ for all $l,m\geq 0$.    
\end{cor}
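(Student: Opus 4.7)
The plan is to express $T_{l,m}E$ as a sequential homotopy colimit of the iterates $\tau_{l,m}^k E$ and then commute $\taupq$ past this colimit. First, I would iterate Lemma \ref{lem: 5.11}: applied once it gives that $\tau_{l,m}E$ is strongly $(p,q)$-polynomial, and applied inductively, $\tau_{l,m}^k E$ is strongly $(p,q)$-polynomial for every $k\geq 0$.

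Next, I would unfold the definition $T_{l,m}E = \hocolim_k \tau_{l,m}^k E$ and consider the commutative diagram
\begin{equation*}
\hocolim_k \tau_{l,m}^k E \xrightarrow{\hocolim_k \rho_{p,q}} \hocolim_k \taupq\tau_{l,m}^k E \longrightarrow \taupq \hocolim_k \tau_{l,m}^k E,
\end{equation*}
whose composite is the map $\rho_{p,q}\colon T_{l,m}E\to \taupq T_{l,m}E$. The first arrow is an objectwise weak equivalence because at each level $k$ the map $\rho_{p,q}\colon \tau_{l,m}^k E\to \taupq\tau_{l,m}^k E$ is a weak equivalence (by the induction above) and sequential homotopy colimits preserve objectwise weak equivalences of $\C$-spaces. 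The second arrow is the canonical comparison from the homotopy colimit of a homotopy limit to the homotopy limit of the homotopy colimit.

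The main obstacle is showing that this comparison is a weak equivalence, i.e.\ that $\taupq$ commutes with sequential homotopy colimits in $\C\TTop_*$. By the analysis in the proof of Lemma \ref{e.3}, $\taupq$ is realised as the totalization of a cosimplicial $\C$-space whose combinatorics is controlled by the topological poset of non-zero subspaces of $\R^{p,q}$, whose height is bounded by $p+q$. Consequently $\taupq$ behaves like a bounded homotopy limit and so commutes with sequential homotopy colimits up to weak equivalence, in direct analogy with the commutation implicit in the corresponding step of Weiss's orthogonal calculus. Combining the three weak equivalences gives that $\rho_{p,q}\colon T_{l,m}E\to \taupq T_{l,m}E$ is an objectwise weak equivalence, which is exactly the statement that $T_{l,m}E$ is strongly $(p,q)$-polynomial.
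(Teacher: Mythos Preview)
Your argument is correct and is essentially the same as the paper's own proof, just spelled out in more detail: the paper simply says ``This is clear from Lemma \ref{lem: 5.11} using that $\taupq$ commutes with sequential homotopy colimits.'' Your justification for the commutation (that $\taupq$ is a homotopy limit over a poset of bounded height, hence a finite homotopy limit, and so commutes with sequential homotopy colimits) is precisely the standard reason behind the paper's unproven assertion.
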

\begin{proof}
This is clear from Lemma \ref{lem: 5.11} using that $\taupq$ commutes with sequential homotopy colimits. 
\end{proof}

In particular, this allows us to define the $(p,q)$-polynomial approximation functor. 

\begin{definition}
Define the \emph{$(p,q)$-polynomial approximation} of an input functor $E\in \Ezero$ to be the functor $T_{p+1,q}T_{p,q+1}E$. By Theorem \ref{weiss6.3.1} and Corollary \ref{Tlm is pq poly if E is}, this is indeed a $(p,q)$-polynomial functor. 
\end{definition}

\begin{ex}\label{ex: 00-poly approx}
The $(0,0)$-polynomial approximation of a functor $E$ is the constant functor 
\begin{equation*}
    T_{1,0}T_{0,1}E(V)\cong \underset{k}{\hocolim}E(\R^{k,k})\cong \underset{a,b}{\hocolim}E(\R^{a,b})=: E(\R^{\infty,\infty}).
\end{equation*}
This is analogous to the $0$-polynomial approximation of an input functor $E$ being the constant functor $T_0E(V)=E(\R^\infty)$ in the underlying calculus. 
\end{ex}

Combining Lemma \ref{weiss6.3.2} and Corollary \ref{Tlm is pq poly if E is}, extends the result of Lemma \ref{weiss6.3.2} from strongly polynomial functors to polynomial functors. That is, the $(p,q)$-polynomial approximation of a $(p,q)$-polynomial functor is the functor itself. 
\begin{lem}
If $E$ is $(p,q)$-polynomial, then $E\simeq T_{p+1,q}T_{p,q+1}E$. \qed
\end{lem}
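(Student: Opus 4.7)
The plan is to use the two hypotheses separately, applying Lemma \ref{weiss6.3.2} twice and invoking Corollary \ref{Tlm is pq poly if E is} to ensure that the second application is legitimate. Recall by Definition \ref{def: polynomial}, $(p,q)$-polynomial means strongly $(p+1,q)$-polynomial \emph{and} strongly $(p,q+1)$-polynomial, so we have two independent strong polynomiality properties at our disposal.

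First I would use the strong $(p,q+1)$-polynomiality: by Lemma \ref{weiss6.3.2}, the unit map $\eta_{p,q+1} : E \to T_{p,q+1}E$ is an objectwise weak equivalence. Next, since $E$ is also strongly $(p+1,q)$-polynomial, Corollary \ref{Tlm is pq poly if E is} (applied with $(l,m)=(p,q+1)$) guarantees that $T_{p,q+1}E$ remains strongly $(p+1,q)$-polynomial. This is the crucial observation that makes the argument go through, since it allows Lemma \ref{weiss6.3.2} to be applied a second time, now to $T_{p,q+1}E$ with respect to the index $(p+1,q)$: the unit map $\eta_{p+1,q} : T_{p,q+1}E \to T_{p+1,q}T_{p,q+1}E$ is an objectwise weak equivalence.

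Composing the two objectwise weak equivalences,
\begin{equation*}
E \xrightarrow{\eta_{p,q+1}} T_{p,q+1}E \xrightarrow{\eta_{p+1,q}} T_{p+1,q}T_{p,q+1}E,
\end{equation*}
yields the desired objectwise weak equivalence $E \simeq T_{p+1,q}T_{p,q+1}E$. No real obstacle arises: the whole statement is a formal consequence of Lemma \ref{weiss6.3.2} (strong polynomiality is preserved under strong polynomial approximation) together with Corollary \ref{Tlm is pq poly if E is} (strong polynomiality in one bi-index is preserved by strong polynomial approximation in another bi-index), and the only thing to check is that these two facts can be chained in the correct order, which they can.
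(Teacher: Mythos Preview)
Your proof is correct and follows precisely the approach indicated by the paper, which states just before the lemma that it follows by combining Lemma \ref{weiss6.3.2} with Corollary \ref{Tlm is pq poly if E is}. The paper gives no further details beyond the \qed, and your two-step chaining of these results is exactly the intended argument.
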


\subsection{The $(p,q)$-polynomial model structure}
Similar to Barnes and Oman \cite{BO13}, we would like to construct a model structure on the input category $\Ezero$ (see Definition \ref{jzero and ezero def}), that captures the homotopy theory of polynomial functors. We will construct the $(p,q)$-polynomial model structure on $\Ezero$ whose fibrant objects are functors that are $(p,q)$-polynomial. We construct this model structure, using the same method as Barnes and Oman in \cite[Section 6]{BO13}, by Bousfield-Friedlander localisation and left Bousfield localisation. To do this, we will also need the projective model structure on $\Ezero$ defined in Proposition \ref{proj model structure}.

To begin, we will construct a model structure on $\Ezero$ whose fibrant objects are the strongly $(p,q)$-polynomial functors. This model structure will allow us to easily deduce results about strongly $(p,q)$-polynomial functors, without having to keep track of the more complex indexing of the $(p,q)$-polynomial model structure.  

\begin{definition}
A morphism $f\in \Ezero$ is a \emph{$\Tpq$-equivalence} if $\Tpq f$ is an objectwise weak equivalence (see Definition \ref{def: obj WE}). 
\end{definition}

\begin{prop}\label{Tpq model structure}\index{$(p,q)\poly\Ezero^S$}
There is a proper model structure on $\Ezero$ such that a morphism $f$ is a weak equivalence if and only if it is a $\Tpq$-equivalence. The cofibrations are the same as for the projective model structure. The fibrant objects are the strongly $(p,q)$-polynomial functors. A morphism $f$ is a fibration if and only if it is an objectwise fibration and the diagram 
\[\begin{tikzcd}
	X && Y \\
	\\
	{T_{p,q}X} && {T_{p,q}Y}
	\arrow["f", from=1-1, to=1-3]
	\arrow["\eta"', from=1-1, to=3-1]
	\arrow["\eta", from=1-3, to=3-3]
	\arrow["{T_{p,q}f}"', from=3-1, to=3-3]
\end{tikzcd}\]
\noindent is a homotopy pullback square in $\Ezero$. Denote this model structure by $(p,q)\poly\Ezero^S$. 
\end{prop}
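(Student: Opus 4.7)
The plan is to apply Bousfield--Friedlander localisation to the projective model structure on $\Ezero$ (Proposition \ref{proj model structure}) using the endofunctor $T_{p,q}:\Ezero\to\Ezero$ equipped with the natural transformation $\eta:\Id\to T_{p,q}$, in direct analogy with the construction of the $n$-polynomial model structure by Barnes and Oman in \cite[Section 6]{BO13}. Recall Bousfield--Friedlander requires three axioms on the coaugmented endofunctor: (A1) $T_{p,q}$ preserves objectwise weak equivalences; (A2) the maps $T_{p,q}\eta_X$ and $\eta_{T_{p,q}X}$ are objectwise weak equivalences; (A3) $T_{p,q}$-equivalences are preserved under homotopy pullback along $T_{p,q}$-fibrations (the cube axiom).

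First I would check (A1): since $\tau_{p,q}$ is built from a homotopy limit of the right Kan extension $\overline{E}$, it preserves objectwise weak equivalences, and so does a sequential homotopy colimit of such functors. Next, (A2) follows from Corollary \ref{cor: TpqTpq we to Tpq} (using Theorem \ref{weiss6.3.1} which says $T_{p,q}E$ is strongly $(p,q)$-polynomial, together with Lemma \ref{weiss6.3.2} which says that the unit on a strongly $(p,q)$-polynomial functor is an objectwise weak equivalence). Naturality then identifies $T_{p,q}\eta_X$ and $\eta_{T_{p,q}X}$ up to objectwise equivalence.

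The main obstacle is (A3). The idea is to show that $T_{p,q}$ commutes, up to objectwise weak equivalence, with homotopy pullbacks. Since $T_{p,q}$ is a sequential homotopy colimit of iterates of $\tau_{p,q}$, it suffices to show $\tau_{p,q}$ commutes with homotopy pullbacks (sequential homotopy colimits of pointed spaces commute with homotopy pullbacks up to weak equivalence, and this passes through the fixed-point functors as needed for $\C$-spaces since $(-)^H$ commutes with sequential homotopy colimits). The functor $\tau_{p,q}$ is itself a homotopy limit (indexed over the poset of non-zero subspaces of $\R^{p,q}$), and homotopy limits commute with homotopy pullbacks objectwise in $\C\TTop_*$. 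This yields (A3) as in \cite[Proposition 6.6]{BO13}.

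Once (A1)--(A3) are verified, Bousfield--Friedlander localisation immediately produces a model structure with the stated cofibrations and $T_{p,q}$-equivalences as weak equivalences, and identifies fibrations as those objectwise fibrations $f:X\to Y$ for which the square with $T_{p,q}f$ is a homotopy pullback. A morphism $X\to *$ is then a fibration exactly when $\eta_X:X\to T_{p,q}X$ is an objectwise weak equivalence, i.e.\ when $X$ is strongly $(p,q)$-polynomial, giving the identification of fibrant objects. Properness is inherited: left properness from the projective model structure (the cofibrations are unchanged and $T_{p,q}$-equivalences include objectwise equivalences), and right properness follows from the homotopy pullback characterisation of fibrations together with the fact that $T_{p,q}$ preserves homotopy pullbacks up to objectwise weak equivalence, exactly as in \cite[Proposition 6.7]{BO13}.
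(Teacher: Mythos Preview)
Your proposal is correct and takes essentially the same approach as the paper: both apply Bousfield--Friedlander localisation (the paper cites \cite[Theorem 9.3]{Bou01} directly rather than unpacking the axioms as you do, and properness is part of Bousfield's theorem rather than requiring a separate argument). The one substantive addition in the paper's proof is that it simultaneously identifies this model structure as the left Bousfield localisation of the projective model structure at the set $S_{p,q}=\{S\Gmor{p,q}{V}{-}_+\to\Jzero(V,-):V\in\Jzero\}$, by matching the fibrant objects; this identification is what powers Corollary~\ref{tpqisspq} and the proof of Proposition~\ref{pq poly implies more poly}, so you would need to supply it separately before those results.
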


\begin{proof}
The model category $(p,q)\poly\Ezero^S$ is the Bousfield-Friedlander localisation of $\Ezero$ with respect to the functor $T_{p,q}:\Ezero\rightarrow \Ezero$, since $T_{p,q}$ satisfies the conditions of \cite[Theorem 9.3]{Bou01}. Moreover, $(p,q)\poly\Ezero^S$ is the left Bousfield localisation of $\Ezero$ with respect to the class of maps 
\begin{equation*}
    S_{p,q}=\{S\Gmor{p,q}{V}{-}_+\rightarrow\Jzero(V,-):V\in\Jzero\}.
\end{equation*}
The proof is analogous to that of Barnes and Oman \cite[Proposition 6.6]{BO13}, where is sufficient to show that the fibrant objects of the left Bousfield localisation are the strongly $(p,q)$-polynomial functors, since both classes of cofibrations are the same.
\end{proof}

\begin{cor}\label{tpqisspq}
The class of $\Tpq$-equivalences is the collection of $S_{p,q}$-local equivalences. 
\end{cor}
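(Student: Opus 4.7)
The plan is to observe that this corollary is an immediate consequence of Proposition \ref{Tpq model structure}, which establishes two equivalent descriptions of the same model structure $(p,q)\poly\Ezero^S$: as the Bousfield--Friedlander localisation of $\Ezero$ with respect to the functor $T_{p,q}$, and as the left Bousfield localisation of $\Ezero$ with respect to the set $S_{p,q}$.

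First I would note that, by construction, the Bousfield--Friedlander localisation of a model category with respect to $T_{p,q}$ has as its weak equivalences precisely the $\Tpq$-equivalences (those morphisms $f$ such that $T_{p,q} f$ is an objectwise weak equivalence). Similarly, the left Bousfield localisation of $\Ezero$ with respect to the set $S_{p,q}$ has, by definition, the $S_{p,q}$-local equivalences as its weak equivalences. Since Proposition \ref{Tpq model structure} asserts that these two localisations yield the same model structure $(p,q)\poly\Ezero^S$, and a model structure determines its class of weak equivalences uniquely, the two classes must coincide.

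There is no real obstacle here; the work has already been done in the proof of Proposition \ref{Tpq model structure}, where one checks that the fibrant objects of both localisations agree (namely, the strongly $(p,q)$-polynomial functors) and that they share the same cofibrations with the projective model structure. The corollary simply records one consequence of this identification, which will be used later when comparing homotopical behaviour described in terms of $T_{p,q}$ with the localisation-theoretic framework built from $S_{p,q}$.
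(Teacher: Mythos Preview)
Your proposal is correct and matches the paper's approach exactly: the corollary is stated without proof immediately after Proposition~\ref{Tpq model structure}, as it follows directly from the identification of the Bousfield--Friedlander and left Bousfield localisations established there.
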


Using this model structure, one can prove that a strongly $(p,q)$-polynomial functor is indeed $(p,q)$-polynomial. The underlying version is proven in by Weiss in \cite[Proposition 5.4]{Wei95} and by Barnes and Oman in \cite[Proposition 6.7]{BO13}. 

\begin{prop}\label{pq poly implies more poly}
If $X\in \Ezero$ is strongly $(p,q)$-polynomial, then it is $(p,q)$-polynomial.
\end{prop}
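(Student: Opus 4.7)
The plan is to convert the strongly polynomial condition into the vanishing of the relevant derivatives, using Lemma \ref{hofiblemma} and Proposition \ref{loops fibre sequence}. Since by definition ``$(p,q)$-polynomial'' is the conjunction of ``strongly $(p+1,q)$-polynomial'' and ``strongly $(p,q+1)$-polynomial'', I would split the argument into these two cases and treat them symmetrically; I describe the $(p+1,q)$ case, with the $(p,q+1)$ case proceeding identically after replacing $\R$ by $\Rdelta$.

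First, I would invoke Corollary \ref{poly implies ind contractible}: since $X$ is strongly $(p,q)$-polynomial, the functor $\ind_{0,0}^{p,q}X\in\Epq$ is objectwise $\C$-contractible. The commuting diagram of induction functors appearing just after Definition \ref{induction definition} provides the factorisation $\ind_{0,0}^{p+1,q}=\ind_{p,q}^{p+1,q}\circ \ind_{0,0}^{p,q}$, so it suffices to show that $\ind_{p,q}^{p+1,q}(\ind_{0,0}^{p,q}X)$ is objectwise $\C$-contractible. For this I would apply Proposition \ref{loops fibre sequence} to $F:=\ind_{0,0}^{p,q}X$, which yields for each $U\in\Jzero$ a homotopy fibre sequence of $\C$-spaces
$$\res_{p,q}^{p+1,q}\ind_{p,q}^{p+1,q}F(U)\longrightarrow F(U)\longrightarrow \Omega^{(p,q)\R}F(U\oplus \R).$$
By hypothesis $F(U)$ and $F(U\oplus\R)$ are $\C$-contractible, so the loop space on the right is $\C$-contractible, and hence so is the fibre on the left. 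Since $\res_{p,q}^{p+1,q}$ is precomposition with an identity-on-objects functor $\CJ{p}{q}\to\CJ{p+1}{q}$, it does not change the underlying pointed $\C$-space, so $\ind_{p,q}^{p+1,q}F(U)=\ind_{0,0}^{p+1,q}X(U)$ is itself $\C$-contractible. Feeding this back into Lemma \ref{hofiblemma} at level $(p+1,q)$ gives the homotopy fibre sequence
$$\ind_{0,0}^{p+1,q}X(V)\longrightarrow X(V)\longrightarrow \tau_{p+1,q}X(V)$$
with contractible fibre, so $\rho_{p+1,q}\colon X\to\tau_{p+1,q}X$ is an objectwise weak equivalence, which is exactly the statement that $X$ is strongly $(p+1,q)$-polynomial.

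I expect the only subtle point to be the assertion that $\res_{p,q}^{p+1,q}$ preserves contractibility; this is pure bookkeeping once one notes that the restriction functor alters only the way morphisms act, not the underlying $\C$-space values. All other ingredients (looping preserves $\C$-contractibility, the two factorisations of induction, and the two fibre sequences of Proposition \ref{loops fibre sequence}) are already in hand, so no genuinely new ideas are required beyond correctly invoking them.
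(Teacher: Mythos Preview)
Your argument has a genuine gap at the final step. From the homotopy fibre sequence
\[
\ind_{0,0}^{p+1,q}X(V)\longrightarrow X(V)\xrightarrow{\ \rho_{p+1,q}\ }\tau_{p+1,q}X(V)
\]
you conclude that contractibility of the fibre forces $\rho_{p+1,q}$ to be an objectwise weak equivalence. This inference is not valid: a map of pointed $\C$-spaces with contractible homotopy fibre induces isomorphisms on $\pi_n^H$ for $n\geq 1$ and an injection on $\pi_0^H$, but there is no reason it should be surjective on $\pi_0^H$. The long exact sequence simply terminates at $\pi_0$ of the base. The paper itself acknowledges exactly this subtlety elsewhere: Lemma~\ref{conn at infinity lemma} is the tool that upgrades ``contractible fibre'' to ``weak equivalence'', but it requires the extra hypothesis that the target is connected at infinity, which is not available for $\tau_{p+1,q}X$ in general. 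So your route, while it correctly shows $\ind_{0,0}^{p+1,q}X\simeq *$, does not reach the conclusion that $X$ is strongly $(p+1,q)$-polynomial.

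The paper avoids this $\pi_0$ trap by taking a completely different approach: it compares the localising sets rather than the derivatives. One shows that each map in $S_{p+1,q}$ (namely $S\Gmor{p+1,q}{V}{-}_+\to\Jzero(V,-)$) is already an $S_{p,q}$-local equivalence. The geometric input is that $\gamma_{p+1,q}\cong\gamma_{p,q}\oplus\gamma_{1,0}$ as $\C$-vector bundles, so the unit sphere bundle $S\gamma_{p+1,q}$ is the fibrewise join $S\gamma_{p,q}*S\gamma_{1,0}$; a Mayer--Vietoris style argument then reduces the claim to the defining $S_{p,q}$-equivalence. This gives identity Quillen adjunctions between the strongly polynomial model structures, and the right Quillen functor sends fibrant objects (strongly $(p,q)$-polynomial functors) to fibrant objects (strongly $(p+1,q)$- and $(p,q+1)$-polynomial functors), which is precisely the statement you want---with no appeal to $\pi_0$-surjectivity needed.
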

\begin{proof}
Since the weak equivalences in $(p,q)$-poly-$\Ezero^S$ are the $S_{p,q}$-local equivalences (see Corollary \ref{tpqisspq}), it suffices to show that $S_{p+1,q}$-equivalences and $S_{p,q+1}$-equivalences are $S_{p,q}$-equivalences. This is sufficient, since it proves the existence of Quillen adjunctions
\begin{align*}
    &\Id:(p+1,q)\poly\Ezero^S \rightleftarrows (p,q)\poly\Ezero^S:\Id\\
      &\Id:(p,q+1)\poly\Ezero^S \rightleftarrows (p,q)\poly\Ezero^S:\Id,
\end{align*}
and $\Id$ being right Quillen yields the desired result. The proof is analogous to \cite[Proposition 6.7]{BO13}, using that the unit sphere of a Whitney sum of $\C$-vector bundles is $\C$-equivariantly equivalent to the fibrewise join of the unit sphere bundles. 
\end{proof}

Combining Proposition \ref{pq poly implies more poly} with Theorem \ref{weiss6.3.1} and Lemma \ref{weiss6.3.2}, gives an important result about how the functors $\Tpq$ interact. 

\begin{cor}\label{tlmtpq=tpq}
Let $E\in\Ezero$. If $l\geq p$ and $m\geq q$, then $$T_{p,q}T_{l,m}E\simeq T_{l,m}T_{p,q}E\simeq T_{p,q}E.$$
\end{cor}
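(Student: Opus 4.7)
The plan is to prove the two objectwise weak equivalences $T_{p,q}T_{l,m}E \simeq T_{p,q}E$ and $T_{l,m}T_{p,q}E \simeq T_{p,q}E$ separately, since each draws on a slightly different combination of the preceding results.

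For $T_{l,m}T_{p,q}E \simeq T_{p,q}E$, I would first invoke Theorem \ref{weiss6.3.1} to see that $T_{p,q}E$ is strongly $(p,q)$-polynomial. Since $l \geq p$ and $m \geq q$, iterating Proposition \ref{pq poly implies more poly} will upgrade this to the statement that $T_{p,q}E$ is also strongly $(l,m)$-polynomial. Then Lemma \ref{weiss6.3.2}, applied at level $(l,m)$ to the object $T_{p,q}E$, immediately gives that the unit $\eta\colon T_{p,q}E \to T_{l,m}T_{p,q}E$ is an objectwise weak equivalence.

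For $T_{p,q}T_{l,m}E \simeq T_{p,q}E$ the key observation will be that the unit $\eta\colon E \to T_{l,m}E$ is itself a $T_{l,m}$-equivalence; this is visible from Corollary \ref{cor: TpqTpq we to Tpq}, since applying $T_{l,m}$ to $\eta$ produces the weak equivalence $T_{l,m}E \simeq T_{l,m}T_{l,m}E$. By Corollary \ref{tpqisspq} this is the same as saying $\eta$ is an $S_{l,m}$-local equivalence. Now I would use (the iterated form of) the core step in the proof of Proposition \ref{pq poly implies more poly}, namely that every $S_{p+1,q}$- and every $S_{p,q+1}$-equivalence is an $S_{p,q}$-equivalence, to conclude that $\eta$ is also an $S_{p,q}$-local equivalence, hence a $T_{p,q}$-equivalence. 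That forces $T_{p,q}\eta\colon T_{p,q}E \to T_{p,q}T_{l,m}E$ to be an objectwise weak equivalence.

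The main subtlety will be keeping the direction of implications straight. Because ``strongly $(p,q)$-polynomial'' is the \emph{stronger} property of the two, the class of $S_{l,m}$-local objects \emph{contains} the class of $S_{p,q}$-local objects, and dually the class of $S_{l,m}$-local equivalences is \emph{contained in} the class of $S_{p,q}$-local equivalences --- the reverse of what one might at first guess. Once this direction is correctly tracked, the argument is a direct assembly of Theorem \ref{weiss6.3.1}, Lemma \ref{weiss6.3.2}, Corollary \ref{cor: TpqTpq we to Tpq}, Corollary \ref{tpqisspq}, and Proposition \ref{pq poly implies more poly}, with no new connectivity estimates or further model-theoretic input required.
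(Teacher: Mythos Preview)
Your proposal is correct and follows essentially the same route the paper indicates: the paper records this as an immediate corollary of Theorem \ref{weiss6.3.1}, Lemma \ref{weiss6.3.2}, and Proposition \ref{pq poly implies more poly}, and your argument simply unpacks that combination (together with its corollaries \ref{cor: TpqTpq we to Tpq} and \ref{tpqisspq}) in the expected way, with the directions of the local-equivalence inclusions handled correctly.
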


\begin{prop}\label{polynomial model structure}\index{$(p,q)\poly\Ezero$}
There is a proper model structure on $\Ezero$ such that a morphism $f$ is a weak equivalence if and only if it is a $T_{p+1,q}T_{p,q+1}$-equivalence. The cofibrations are the same as for the projective model structure. The fibrant objects are the functors that are $(p,q)$-polynomial. A morphism $f$ is a fibration if and only if it is an objectwise fibration and the diagram 
\[\begin{tikzcd}
	X && Y \\
	\\
	{T_{p+1,q}T_{p,q+1}X} && {T_{p+1,q}T_{p,q+1}Y}
	\arrow["{T_{p+1,q}T_{p,q+1}f}"', from=3-1, to=3-3]
	\arrow["f", from=1-1, to=1-3]
	\arrow["\eta", from=1-3, to=3-3]
	\arrow["\eta"', from=1-1, to=3-1]
\end{tikzcd}\] is a homotopy pullback square in $\Ezero$. Denote this model structure by $(p,q)\poly\Ezero$.
\end{prop}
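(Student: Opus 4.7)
The plan is to mirror the proof of Proposition \ref{Tpq model structure}, applying Bousfield--Friedlander localisation to the projective model structure on $\Ezero$, this time with respect to the composite endofunctor $Q := T_{p+1,q}T_{p,q+1}$, invoking \cite[Theorem 9.3]{Bou01}. First I would verify that $Q$ is a coaugmented homotopy idempotent functor on $\Ezero$, where the coaugmentation $\eta \colon \mathrm{Id} \to Q$ is the composite of the two approximation maps. Axiom A1 (that $Q$ preserves objectwise weak equivalences) is clear: each $T_{l,m}$ is built as a sequential homotopy colimit of iterates of $\tau_{l,m}$, and both the sequential homotopy colimits and the homotopy limits defining $\tau_{l,m}$ preserve objectwise weak equivalences in $\CTop_*$.

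For Axiom A2 (homotopy idempotence), the key input is Corollary \ref{tlmtpq=tpq}. By Theorem \ref{weiss6.3.1}, $T_{p,q+1}E$ is strongly $(p,q+1)$-polynomial and $T_{p+1,q}T_{p,q+1}E = QE$ is strongly $(p+1,q)$-polynomial; by Corollary \ref{Tlm is pq poly if E is} applied to $T_{p,q+1}E$, the functor $QE$ is also strongly $(p,q+1)$-polynomial. Hence $QE$ is $(p,q)$-polynomial in the sense of Definition \ref{def: polynomial}, and applying Lemma \ref{weiss6.3.2} to both factors in turn yields that $\eta \colon QE \to Q(QE)$ is an objectwise weak equivalence, as required. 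For Axiom A3 (the properness/cube condition), I would argue as in Proposition \ref{Tpq model structure}: $Q$ is built from functors that commute with homotopy pullbacks up to objectwise weak equivalence in $\C\TTop_*$, since sequential homotopy colimits commute with finite homotopy limits of $\C$-spaces and the totalisation description of $\taupq$ from Lemma \ref{e.3} is compatible with fibre sequences.

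Once the Bousfield--Friedlander construction is justified, the fibrant objects are exactly those $X$ for which $\eta \colon X \to QX$ is an objectwise weak equivalence; by Lemma \ref{weiss6.3.2} combined with the argument above, these are precisely the $(p,q)$-polynomial functors of Definition \ref{def: polynomial}. The coincidence of cofibrations with those of the projective structure, the characterisation of fibrations as the objectwise fibrations making the displayed square a homotopy pullback, and right/left properness then all follow formally from the Bousfield--Friedlander machinery together with the properness of the projective model structure in Proposition \ref{proj model structure}. The main technical obstacle is Axiom A3 for the composite $Q$: although each individual $T_{l,m}$ satisfies this property (implicit in Proposition \ref{Tpq model structure}), one must ensure the composite inherits it, and here the essential point is Corollary \ref{tlmtpq=tpq}, which guarantees that $T_{p+1,q}$ and $T_{p,q+1}$ commute up to objectwise weak equivalence and hence that the composite $Q$ still takes homotopy pullback squares in $\Ezero$ to homotopy pullback squares up to $Q$-equivalence.
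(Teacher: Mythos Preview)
Your approach is correct and matches the paper's: it too constructs $(p,q)\poly\Ezero$ as the Bousfield--Friedlander localisation of the projective model structure at $Q = T_{p+1,q}T_{p,q+1}$, invoking \cite[Theorem 9.3]{Bou01}, and your verification of the axioms fills in details the paper leaves implicit.

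One small correction: Corollary \ref{tlmtpq=tpq} does \emph{not} give the commutativity of $T_{p+1,q}$ and $T_{p,q+1}$ that you invoke for A3, since its hypothesis requires $l\geq p$ and $m\geq q$, and neither of $(p+1,q)$, $(p,q+1)$ dominates the other. The commutativity you want holds for the more elementary reason that the $\tau$-functors are homotopy limits and hence commute (as in the proof of Lemma \ref{lem: 5.11}), so their sequential homotopy colimits do as well. In any case A3 does not actually require this: it only needs $Q$ to preserve homotopy pullback squares, and this is immediate because each factor $T_{l,m}$ does (sequential homotopy colimits commute with finite homotopy limits in $\CTop_*$), so the composite does too. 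The paper also records, as an addendum, that this model structure agrees with the left Bousfield localisation of $\Ezero$ at the set $S_{p+1,q}\amalg S_{p,q+1}$; you omit this, but it is not needed for the proposition as stated.
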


\begin{proof}The model category $(p,q)\poly\Ezero$ is the Bousfield-Friedlander localisation of $\Ezero$ with respect to the functor $T_{p+1,q}T_{p,q+1}:\Ezero\rightarrow \Ezero$, since $T_{p+1,q}T_{p,q+1}$ satisfies the conditions of \cite[Theorem 9.3]{Bou01}. Moreover, $(p,q)\poly\Ezero$ is the left Bousfield localisation of $\Ezero$ with respect to the class of maps $S_{p+1,q}\coprod S_{p,q+1}$ where 
\begin{align*}
    S_{p+1,q}&=\{S\Gmor{p+1,q}{V}{-}_+\rightarrow\Jzero(V,-):V\in\Jzero\}\\
    S_{p,q+1}&=\{S\Gmor{p,q+1}{V}{-}_+\rightarrow\Jzero(V,-):V\in\Jzero\}.
\end{align*}
\end{proof}

\section{Equivariant Homogeneous functors}

In orthogonal calculus, the homotopy fibre of the map $T_n F\rightarrow T_{n-1} F$ is $n$-polynomial and has trivial $(n-1)$-polynomial approximation. Functors of this type are called $n$-homogeneous, and are completely determined by orthogonal spectra with $O(n)$-action, see \cite[Theorem 7.3]{Wei95}. In this section we define a new class of $(p,q)$-homogeneous functors in the $\C$-equivariant input category. The main goal will be to classify such functors by a category of spectra, as is done in the underlying calculus. The relation between the $(p,q)$-homogeneous model structure and the stable model structure in Proposition \ref{prop: stable ms}, forms one half of the zig-zag of equivalences that gives this classification, see Theorem \ref{zigzagclassification}.

\subsection{Homogeneous functors}\label{sec: homog functors}

We want to use the properties of the homotopy fibre $$D_{p,q} F\rightarrow T_{p+1,q}T_{p,q+1}F\rightarrow \Tpq F$$to define a class of $(p,q)$-homogeneous functors in the input category $\Ezero$ (see Definition \ref{jzero and ezero def}). First, we will need to determine what these properties are, and to do so we will need to give some important and useful properties of polynomial functors. Many of these properties will also be needed when constructing the homogeneous model structure in Section \ref{sec:homog ms}. 

The following is the $\C$-version of \cite[Lemma 5.5]{Wei95}. The proof follows, as in \cite{Wei95}, from the homotopy fibre sequence in Lemma \ref{hofiblemma}. 

\begin{lem}\label{lem: c2 wes 5.5}
Let $g:E\rightarrow G$ in $\Ezero$ be such that $\ind_{0,0}^{p,q}G$ is objectwise contractible and $E$ is strongly $(p,q)$-polynomial, then the homotopy fibre of $g$ is strongly $(p,q)$-polynomial. 
\end{lem}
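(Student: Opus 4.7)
The plan is to show that the homotopy fibre $F := \hofibre(g)$ satisfies $\rho_{p,q}F : F \to \taupq F$ is an objectwise weak equivalence of $\C$-spaces. The main tool is the fibre sequence of Lemma \ref{hofiblemma}, applied to the three functors $F$, $E$ and $G$ simultaneously.

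First I would assemble the homotopy fibre sequence $F \to E \to G$ in $\Ezero$ and apply the functor $\ind_{0,0}^{p,q}$ levelwise. Since $\ind_{0,0}^{p,q}$ is given by the enriched natural transformation space $\Nat_{0,0}(\Jmor{p,q}{U}{-}, -)$ out of a representable (hence projectively cofibrant, by Lemma \ref{sphereandmorpharecofibrant}) functor, it sends objectwise homotopy fibre sequences to homotopy fibre sequences of $\C$-spaces — exactly the dual of the statement in Remark \ref{remNat} that $\Nat_{p,q}(-,F)$ converts cofibre sequences to fibre sequences. This yields a fibre sequence
\begin{equation*}
\ind_{0,0}^{p,q}F(V) \longrightarrow \ind_{0,0}^{p,q}E(V) \longrightarrow \ind_{0,0}^{p,q}G(V)
\end{equation*}
for every $V \in \Jzero$.

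Now $E$ is strongly $(p,q)$-polynomial, so by Corollary \ref{poly implies ind contractible} the middle term $\ind_{0,0}^{p,q}E(V)$ is objectwise contractible, and by hypothesis so is the right-hand term $\ind_{0,0}^{p,q}G(V)$. The long exact sequence of equivariant homotopy groups (equivalently, taking fixed points of the fibre sequence at each closed subgroup $H \leq \C$) then forces $\ind_{0,0}^{p,q}F(V)$ to be objectwise contractible as well.

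Finally, I would apply Lemma \ref{hofiblemma} to $F$ itself: there is a homotopy fibre sequence
\begin{equation*}
\ind_{0,0}^{p,q}F(V) \longrightarrow F(V) \xrightarrow{\rho_{p,q}F} \taupq F(V).
\end{equation*}
Since the fibre is $\C$-contractible, $\rho_{p,q}F$ is a weak equivalence of $\C$-spaces for every $V$, i.e.\ $F$ is strongly $(p,q)$-polynomial, as desired. The main obstacle I anticipate is the verification that $\ind_{0,0}^{p,q}$ preserves homotopy fibre sequences $\C$-equivariantly; this reduces to the representable $\Jmor{p,q}{U}{-}$ being cofibrant in the projective model structure on $\Ezero$ (Lemma \ref{sphereandmorpharecofibrant}) together with the closed symmetric monoidal structure of $\CTop_*$, both of which are already in place in the paper.
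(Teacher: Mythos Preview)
Your argument has a genuine gap in the final step. From the homotopy fibre sequence
\[
\ind_{0,0}^{p,q}F(V) \longrightarrow F(V) \xrightarrow{\ \rho_{p,q}F\ } \taupq F(V)
\]
and the contractibility of the fibre, you conclude that $\rho_{p,q}F$ is a weak equivalence. This implication is false in general: a map with contractible homotopy fibre induces isomorphisms on $\pi_n^H$ for $n\geq 1$ and an \emph{injection} on $\pi_0^H$, but need not be surjective on $\pi_0^H$ (e.g.\ $*\hookrightarrow S^0$). Equivalently, ``$\ind_{0,0}^{p,q}F$ objectwise contractible'' does \emph{not} by itself imply ``$F$ strongly $(p,q)$-polynomial''; this is precisely why the paper needs the extra hypotheses on $E$ and $G$, and why Corollary~\ref{cor: F delooping} requires a delooping.

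The argument the paper has in mind (following Weiss) does not pass through $\ind_{0,0}^{p,q}F$ directly. Instead one compares the two rows of fibre sequences
\[
\begin{array}{ccccc}
F(V) & \longrightarrow & E(V) & \longrightarrow & G(V)\\[2pt]
\big\downarrow\rho_F & & \big\downarrow\rho_E & & \big\downarrow\rho_G\\[2pt]
\taupq F(V) & \longrightarrow & \taupq E(V) & \longrightarrow & \taupq G(V)
\end{array}
\]
(the bottom row is again a fibre sequence because $\taupq$ is a homotopy limit). Here $\rho_E$ is a weak equivalence by hypothesis, and Lemma~\ref{hofiblemma} applied to $G$ shows $\rho_G$ has contractible fibre, hence induces isomorphisms on $\pi_n^H$ for $n\geq 1$ and an injection on $\pi_0^H$. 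A five-lemma chase on the long exact sequences of $H$-fixed homotopy groups then forces $\rho_F$ to be an isomorphism on all $\pi_n^H$, including $n=0$. Your preliminary step showing $\ind_{0,0}^{p,q}F\simeq *$ is correct but not sufficient; you must retain the comparison with $E$ and $G$ at the level of $\rho$, not just at the level of $\ind$.
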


\begin{cor}\label{cor: c2 weiss 5.5}
Let $g:E\rightarrow F$ in $\Ezero$ be such that $\ind_{0,0}^{p+1,q}F$ and $\ind_{0,0}^{p,q+1}F$ are both objectwise contractible and $E$ is $(p,q)$-polynomial, then the homotopy fibre of $g$ is $(p,q)$-polynomial. \qed
\end{cor}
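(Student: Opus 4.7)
The plan is to derive this corollary as a direct double application of the preceding Lemma \ref{lem: c2 wes 5.5}. Recall from Definition \ref{def: polynomial} that being $(p,q)$-polynomial means being both strongly $(p+1,q)$-polynomial and strongly $(p,q+1)$-polynomial, so the hypotheses and conclusion of the corollary each split into two conditions of the form appearing in Lemma \ref{lem: c2 wes 5.5}.

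More precisely, since $E$ is $(p,q)$-polynomial, $E$ is in particular strongly $(p+1,q)$-polynomial. Together with the hypothesis that $\ind_{0,0}^{p+1,q} F$ is objectwise contractible, Lemma \ref{lem: c2 wes 5.5} (applied with the indexing $(p+1,q)$ in place of $(p,q)$) implies that the homotopy fibre of $g$ is strongly $(p+1,q)$-polynomial. Similarly, $E$ is strongly $(p,q+1)$-polynomial, and combined with the contractibility of $\ind_{0,0}^{p,q+1} F$, a second application of Lemma \ref{lem: c2 wes 5.5} (this time with indexing $(p,q+1)$) yields that the homotopy fibre of $g$ is strongly $(p,q+1)$-polynomial.

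Combining these two facts, the homotopy fibre of $g$ satisfies both conditions in Definition \ref{def: polynomial} and is therefore $(p,q)$-polynomial, as required. There is no genuine obstacle here; the whole point of the corollary is simply to package the strongly-polynomial statement of Lemma \ref{lem: c2 wes 5.5} for the composite notion of $(p,q)$-polynomial used in the bi-indexed $\C$-equivariant calculus.
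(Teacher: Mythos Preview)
Your proof is correct and is exactly the argument the paper intends: the corollary is stated with a \qed and no written proof, precisely because it is the immediate two-fold application of Lemma~\ref{lem: c2 wes 5.5} at indices $(p+1,q)$ and $(p,q+1)$, combined with Definition~\ref{def: polynomial}.
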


The following is a $\C$-generalisation of \cite[Corollary 5.6]{Wei95}, which is an instant consequence Corollary \ref{cor: c2 weiss 5.5} by setting $E=*$. 
\begin{cor}\label{cor: F delooping}
Let $F\in\Ezero$ be such that $\ind_{0,0}^{p+1,q}F$ and $\ind_{0,0}^{p,q+1}F$ are both objectwise contractible, then the functor 
\begin{equation*}
    V\mapsto \Omega F(V)
\end{equation*}
is $(p,q)$-polynomial. \qed
\end{cor}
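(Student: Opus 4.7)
The plan is to derive this as an immediate application of Corollary \ref{cor: c2 weiss 5.5}, with a carefully chosen source functor and the homotopy fibre reinterpreted as a loop space. First I would take $E \in \Ezero$ to be the constant functor at the one-point $\C$-space $\ast$. Because $E$ is constant, the right Kan extension $\overline{E}$ along $\Jzero \hookrightarrow \Jzerobar$ is also constant at $\ast$, so $\taupq E(V) \cong \ast$ for every $V$ and every $p,q$. Hence $\rho_{p,q}$ is an isomorphism, $E$ is strongly $(p,q)$-polynomial for every $(p,q)$, and in particular it is $(p,q)$-polynomial in the sense of Definition \ref{def: polynomial}.

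Next, consider the unique map $g : E \to F$ in $\Ezero$; objectwise, $g(V) : \ast \to F(V)$ is the basepoint inclusion. The hypotheses of the corollary, namely that $\ind_{0,0}^{p+1,q}F$ and $\ind_{0,0}^{p,q+1}F$ are both objectwise contractible, are exactly the remaining hypotheses required to apply Corollary \ref{cor: c2 weiss 5.5} to $g$. That corollary then yields that the objectwise homotopy fibre of $g$ is $(p,q)$-polynomial as a functor in $\Ezero$.

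It only remains to identify this homotopy fibre with $V \mapsto \Omega F(V)$. At each $V$, the homotopy fibre of the basepoint inclusion $\ast \to F(V)$ in pointed $\C$-spaces is by definition $\Omega F(V)$, with the pointwise conjugation $\C$-action on loops inherited from the $\C$-action on $F(V)$ and the trivial action on $S^1$. This identification is natural and $\C$-equivariant in $V$, so the $(p,q)$-polynomial functor produced by Corollary \ref{cor: c2 weiss 5.5} is precisely $V \mapsto \Omega F(V)$. There is no real obstacle here: the only point to verify is the naturality of the loop space identification as a morphism in $\Ezero$, which is standard for pointed $\C$-spaces.
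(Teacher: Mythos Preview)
Your proposal is correct and follows exactly the same approach as the paper, which states that the corollary is an instant consequence of Corollary~\ref{cor: c2 weiss 5.5} by setting $E=\ast$. You have supplied the details the paper leaves implicit, namely that the constant functor $\ast$ is $(p,q)$-polynomial and that the homotopy fibre of the basepoint inclusion $\ast\to F(V)$ is $\Omega F(V)$.
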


Now we can determine the properties of the homotopy fibre $D_{p,q}F$. 

\begin{definition}\label{def: pq reduced}
Let $E\in \Ezero$. Define $E$ to be \emph{$(p,q)$-reduced} if $\Tpq E$ is objectwise contractible. 
\end{definition}

\begin{rem}
    Note that if $E\in\Ezero$ is $(p,q)$-reduced, then $E$ is also $(a,b)$-reduced for all pairs $(a,b)$ with $0\leq a\leq p$ and $0\leq b\leq q$. This follows from Corollary \ref{tlmtpq=tpq}.
\end{rem}

\begin{thm}\label{thm: DYpq is homog}\index{$D_{p,q}$}
The homotopy fibre $D_{p,q} F$ of the map $r_{p,q}: T_{p+1,q}T_{p,q+1} F\rightarrow T_{p,q} F$ is $(p,q)$-polynomial and $(p,q)$-reduced.
\end{thm}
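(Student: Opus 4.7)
The plan is to verify the two defining properties separately: that $D_{p,q}F$ is $(p,q)$-polynomial and that it is $(p,q)$-reduced. The first is a direct application of Corollary~\ref{cor: c2 weiss 5.5}, while the second follows by applying $T_{p,q}$ to the defining fibre sequence and checking that $T_{p,q}r_{p,q}$ is a weak equivalence.

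For $(p,q)$-polynomiality, I would apply Corollary~\ref{cor: c2 weiss 5.5} to the map $r_{p,q}\colon T_{p+1,q}T_{p,q+1}F \to T_{p,q}F$. The source is $(p,q)$-polynomial: $T_{p,q+1}F$ is strongly $(p,q+1)$-polynomial by Theorem~\ref{weiss6.3.1}, the functor $T_{p+1,q}$ preserves this condition by Corollary~\ref{Tlm is pq poly if E is}, and the result is strongly $(p+1,q)$-polynomial by Theorem~\ref{weiss6.3.1}. For the target, $T_{p,q}F$ is strongly $(p,q)$-polynomial by Theorem~\ref{weiss6.3.1}, hence $(p,q)$-polynomial by Proposition~\ref{pq poly implies more poly}, so in particular strongly $(p+1,q)$- and strongly $(p,q+1)$-polynomial. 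Corollary~\ref{poly implies ind contractible} then yields that both $\ind_{0,0}^{p+1,q}T_{p,q}F$ and $\ind_{0,0}^{p,q+1}T_{p,q}F$ are objectwise contractible, and Corollary~\ref{cor: c2 weiss 5.5} finishes this part.

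For $(p,q)$-reducedness, the plan is to apply $T_{p,q}$ to the defining fibre sequence. Since $\taupq$ is a homotopy limit and $T_{p,q}$ is a sequential homotopy colimit of iterates of $\taupq$, the functor $T_{p,q}$ preserves homotopy fibre sequences, yielding
\begin{equation*}
T_{p,q}D_{p,q}F \longrightarrow T_{p,q}T_{p+1,q}T_{p,q+1}F \longrightarrow T_{p,q}T_{p,q}F.
\end{equation*}
It then suffices to show that $T_{p,q}r_{p,q}$ is an objectwise weak equivalence, as this forces the fibre $T_{p,q}D_{p,q}F$ to be objectwise contractible. Since $T_{p,q}F$ is $(p,q)$-polynomial, $r_{p,q}$ is defined so that the unit $\eta_{p,q}\colon F\to T_{p,q}F$ factors as $F \to T_{p+1,q}T_{p,q+1}F \xrightarrow{r_{p,q}} T_{p,q}F$. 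I would apply $T_{p,q}$ and invoke two-out-of-three: the composite becomes $T_{p,q}\eta_{p,q}$, which is a weak equivalence by Corollary~\ref{cor: TpqTpq we to Tpq}; and the first map becomes a weak equivalence because $F \to T_{p+1,q}T_{p,q+1}F$ decomposes as $F \to T_{p,q+1}F \to T_{p+1,q}T_{p,q+1}F$, a composite of a $T_{p,q+1}$-equivalence and a $T_{p+1,q}$-equivalence. Both are $T_{p,q}$-equivalences by the implication (established inside the proof of Proposition~\ref{pq poly implies more poly}) that $S_{p+1,q}$- and $S_{p,q+1}$-local equivalences are $S_{p,q}$-local equivalences, combined with Corollary~\ref{tpqisspq}.

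The main delicate point will be keeping track of the several distinct polynomial conditions and avoiding the temptation to confuse $(p,q)$-polynomial with strongly $(p,q)$-polynomial; the key subtlety is the two-out-of-three argument, where the identification of $T_{p,q}T_{p+1,q}T_{p,q+1}F$ with $T_{p,q}F$ is established abstractly through the localisation arguments of Proposition~\ref{pq poly implies more poly} rather than by a direct comparison, but this is precisely what makes the argument go through cleanly.
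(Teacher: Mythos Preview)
Your proof is correct and follows essentially the same approach as the paper. For reducedness the paper invokes Corollary~\ref{tlmtpq=tpq} directly to identify both $T_{p,q}T_{p+1,q}T_{p,q+1}F$ and $T_{p,q}T_{p,q}F$ with $T_{p,q}F$ (under which $T_{p,q}r_{p,q}$ becomes the identity), whereas you unpack this into a two-out-of-three argument using the localisation input from Proposition~\ref{pq poly implies more poly}; these are the same ingredients packaged slightly differently, and your polynomiality argument matches the paper's exactly.
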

\begin{proof}
Indeed, applying $\Tpq$ to the homotopy fibre sequence and applying Corollary \ref{tlmtpq=tpq} gives
\[\begin{tikzcd}
	{T_{p,q}D_{p,q} F} & {T_{p,q}T_{p+1,q}T_{p,q+1}F} & {T_{p,q}T_{p,q}F} \\
	& {T_{p,q}F} & {T_{p,q}F}
	\arrow[from=1-1, to=1-2]
	\arrow["r_{p,q}", from=1-2, to=1-3]
	\arrow["\simeq"{marking}, draw=none, from=1-2, to=2-2]
	\arrow["\simeq"{marking}, draw=none, from=1-3, to=2-3]
	\arrow["\id"',from=2-2, to=2-3]
\end{tikzcd}\]
Therefore, $\Tpq D_{p,q}F\simeq \hofibre[\Tpq F\overset{\id}{\rightarrow} \Tpq F]\simeq *$. That is, $ D_{p,q}F$ is $(p,q)$-reduced. 

The functors $T_{p+1,q}T_{p,q+1}F$ and $\Tpq F$ are both $(p,q)$-polynomial. This follows from Theorem \ref{weiss6.3.1} and Proposition \ref{pq poly implies more poly}. Therefore, $D_{p,q} F$ is $(p,q)$-polynomial, by Corollary \ref{cor: c2 weiss 5.5}.
\end{proof}

\begin{definition}
Let $E\in \Ezero$. $E$ is defined to be \emph{$(p,q)$-homogeneous} if it is $(p,q)$-polynomial and $(p,q)$-reduced.
\end{definition}

We now define the equivariant generalisation of the term `connected at infinity', this is analogous to \cite[Definition 5.9]{Wei95}. 

\begin{definition}
$E\in\Ezero$ is defined to be \emph{connected at infinity} if the $\C$-space 
\begin{equation*}
    \underset{a,b}{\hocolim} E(\R^{a,b})=:E(\R^{\infty,\infty})
\end{equation*}
is connected (meaning that the equivariant homotopy groups $\pi_0^H E(\R^{\infty,\infty})$ are trivial for all closed subgroups $H$ of $\C$). 
\end{definition}

\begin{lem}\label{lem: homog implies connected at infinity}
If $E$ is $(p,q)$-homogeneous for pairs $(p,q)$ where at least one of $p,q$ is greater than zero, then $E$ is connected at infinity. 
\end{lem}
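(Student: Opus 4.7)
The plan is to prove the stronger statement that $E(\R^{\infty,\infty})\simeq *$, from which connectedness is immediate. The key observation is that the hypothesis $T_{p,q}E\simeq *$ forces the vanishing of smaller polynomial approximations as well, via Corollary \ref{tlmtpq=tpq}.

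Without loss of generality, suppose $p\geq 1$ (the case $q\geq 1$ is entirely symmetric). Since $p\geq 1$ and $q\geq 0$, Corollary \ref{tlmtpq=tpq} applies with small index $(1,0)$ and large index $(p,q)$ to yield
\[
T_{1,0}E\simeq T_{1,0}T_{p,q}E.
\]
Because $E$ is $(p,q)$-homogeneous, $T_{p,q}E$ is objectwise contractible, and since the formula $T_{1,0}F(V)=\hocolim_j F(V\oplus\R^j)$ from Example \ref{exampleT10T01} shows that $T_{1,0}$ preserves objectwise contractibility, I conclude $T_{1,0}E\simeq *$ objectwise.

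To finish, I would unpack the description of $E(\R^{\infty,\infty})$ from Example \ref{ex: 00-poly approx} as a double homotopy colimit:
\[
E(\R^{\infty,\infty})=\underset{a,b}{\hocolim}\,E(\R^a\oplus\R^{b\delta})\simeq T_{0,1}T_{1,0}E(0).
\]
Applying $T_{0,1}$ to the objectwise contractible functor $T_{1,0}E$ again produces an objectwise contractible functor; evaluating at $0$ then gives $E(\R^{\infty,\infty})\simeq *$, and in particular $\pi_0^H E(\R^{\infty,\infty})=0$ for every closed subgroup $H\leq \C$. The case $q\geq 1$ proceeds analogously, first extracting $T_{0,1}E\simeq *$ and then applying $T_{1,0}$.

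There is no substantial obstacle in the argument; it reduces to the formal observation that the hypothesis $p\geq 1$ (respectively $q\geq 1$) is exactly what is required to place $(1,0)$ (respectively $(0,1)$) below $(p,q)$ in the sense of Corollary \ref{tlmtpq=tpq}, together with the fact that the strongly polynomial approximation functors preserve objectwise contractibility.
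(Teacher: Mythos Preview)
Your proof is correct and follows essentially the same route as the paper. The paper's one-line proof just cites Example \ref{ex: 00-poly approx}, which identifies $E(\R^{\infty,\infty})$ with the $(0,0)$-polynomial approximation $T_{1,0}T_{0,1}E$; combined with the remark after Definition \ref{def: pq reduced} (that $(p,q)$-reduced implies $(a,b)$-reduced for $a\leq p$, $b\leq q$, itself a consequence of Corollary \ref{tlmtpq=tpq}), this is exactly the argument you have written out in detail.
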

\begin{proof}
This is a straightforward calculation that follows from Example \ref{ex: 00-poly approx}. 
\end{proof}
 
The next result is \cite[Proposition 5.10]{Wei95}. It is used in conjunction with Lemma \ref{lem: homog implies connected at infinity} to construct the $(p,q)$-homogeneous model structure. The proof follows as in \cite[Lemma 5.10]{BO13}, replacing $T_n$ by $T_{p,q}$, so we omit it here.

\begin{lem}\label{conn at infinity lemma}
Let $g:E\rightarrow F$ be a map between strongly $(p,q)$-polynomial objects such that the homotopy fibre of $g$ is objectwise contractible and $F$ is connected at infinity. Then $g$ is an objectwise weak equivalence. 
\end{lem}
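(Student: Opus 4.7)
The strategy mirrors the non-equivariant argument of \cite[Lemma 5.10]{BO13}, adapted to the $\C$-equivariant setting by tracking behaviour at both trivial and $\C$-fixed points. The essential task is to upgrade the objectwise contractibility of $\hofibre(g)$ to an honest objectwise weak equivalence; this upgrade can fail only if $F(V)$ has path components at some subgroup level that are not in the image of $g(V)$, so the heart of the argument is to rule out such extra components.

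First, I would fix a basepoint and consider, for each $V\in\Jzero$ and each closed subgroup $H\leq\C$, the long exact sequence of homotopy groups coming from $\hofibre(g(V))\to E(V)\to F(V)$ at $H$-fixed points. The hypothesis that $\hofibre(g)$ is objectwise contractible means $\hofibre(g(V))^H\simeq *$, which immediately yields that $\pi_k^H(g(V))$ is an isomorphism for $k\geq 1$ and an injection for $k=0$. Consequently, the lemma reduces to verifying that $\pi_0^H F(V)$ is trivial for every $V$ and every closed subgroup $H\leq\C$: under that condition, $g(V)$ is an $H$-weak equivalence for every $H$, which is precisely what it means to be an objectwise weak equivalence.

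To establish the required connectedness of each $F(V)^H$, I would exploit that $F$ is strongly $(p,q)$-polynomial. By Lemma \ref{weiss6.3.2} there is an objectwise weak equivalence $F\simeq \Tpq F$, and unwinding $\Tpq F(V)=\hocolim_k \taupq^k F(V)$ shows that each stage is a homotopy limit, indexed by chains of nonzero subspaces of $\R^{p,q}$, of evaluations $\overline{F}(U_1\oplus\cdots\oplus U_k\oplus V)$. Choosing chains whose total dimension grows without bound, these evaluations become cofinal among subrepresentations of the universe and feed into $F(\R^{\infty,\infty})$. Combining the hypothesis that $F(\R^{\infty,\infty})$ is $H$-connected for all $H$ with the equivariant splitting theorem (Theorem \ref{splittingtheorems}), one can propagate connectedness back through the tower to obtain $\pi_0^H F(V)=0$ for every $V$ and every $H$, which completes the proof via the long exact sequence of the first step.

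The main obstacle, as in the non-equivariant case, is the $\pi_0$ step: homotopy limits need not preserve connectedness, so propagating triviality of $\pi_0^H$ from $\R^{\infty,\infty}$ to arbitrary $V$ requires a careful cofinality argument (or a $\Min$-connectivity estimate in the style of Lemma \ref{e.3}). The additional $\C$-equivariant wrinkle is that one must run this analysis at both $H=e$ and $H=\C$; Theorem \ref{splittingtheorems} reduces the $\C$-fixed situation to two independent non-equivariant calculations on the $\R^p$-piece and the $\R^{q\delta}$-piece, to which the BO13 argument applies verbatim, and the two results can be reassembled to deliver the desired objectwise $\C$-weak equivalence.
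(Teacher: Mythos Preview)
Your reduction via the long exact sequence to surjectivity of $\pi_0^H g(V)$ is correct and matches the opening move of the argument in \cite[Lemma~5.10]{BO13}. The gap is in the next step: you propose to obtain surjectivity by proving the strictly stronger statement that $\pi_0^H F(V)=0$ for every $V$ and $H$. That statement is \emph{not} a consequence of the hypotheses. For a concrete witness, take any $(p,q)$-homogeneous functor with $p,q\geq 1$, say $F(V)=\Omega^\infty[(S^{(p,q)V}\wedge S)_{hO(p,q)}]$ from Example~\ref{5.7}, and set $E=F$, $g=\id$. Then $F$ is strongly $(p+1,q)$-polynomial (being $(p,q)$-polynomial) and connected at infinity by Lemma~\ref{lem: homog implies connected at infinity}, and $\hofibre(\id)\simeq *$; nevertheless $F(0)\simeq\Omega^\infty\Sigma^\infty_+ BO(p,q)$ has $\pi_0^e\cong\ZZ\neq 0$. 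So no argument for your intermediate claim can succeed, and in particular the appeal to Lemma~\ref{e.3} is in the wrong direction: that lemma improves connectivity under forward application of $\taupq$, whereas you are trying to push connectedness backwards from $\R^{\infty,\infty}$ through homotopy limits, which (as you yourself note) need not preserve connectedness.

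The missing ingredient is that the surjectivity argument must use the hypothesis that \emph{$E$} is strongly $(p,q)$-polynomial, not just $F$; your outline discards $E$ after the first paragraph. The approach of \cite{BO13} (following \cite{Wei95}) does not attempt to show $F(V)$ is connected. Rather, one factors $g$ through the subfunctor $F'\subseteq F$ given levelwise by the union of path components hit by $g(V)$; the map $E\to F'$ is then an objectwise weak equivalence, so $F'$ inherits strong $(p,q)$-polynomiality from $E$, and one is reduced to showing that the inclusion $F'\hookrightarrow F$ of strongly $(p,q)$-polynomial functors, which is an equivalence at $\R^{\infty,\infty}$ by the connected-at-infinity hypothesis, is already an equivalence at every $V$. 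It is this comparison of two polynomial functors along their $T_{p,q}$-towers that replaces your backward-propagation step.
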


Finally we will prove a version of \cite[Corollary 5.12]{Wei95}. We will use this result later, along with Lemma \ref{omega spectra lemma}, to show that the induction functor takes objects that are $(p,q)$-polynomial to $(p,q)\Omega$-spectra. This guarantees that induction is a right Quillen functor from the $(p,q)$-polynomial model structure on $\Ezero$ to the $(p,q)$-stable model structure on $\Epq$.

\begin{prop}\label{5.12}
Let $E\in \Ezero$ be $(p,q)$-polynomial. Then for all $V\in\Jzero$ there exist weak equivalences of $\C$-spaces
\begin{align*}
    \ind_{0,0}^{p,q}E(V)&\rightarrow\Omega^{p,q\R}\ind_{0,0}^{p,q}E(V\oplus\R)\\
    \ind_{0,0}^{p,q}E(V)&\rightarrow\Omega^{p,q\Rdelta}\ind_{0,0}^{p,q}E(V\oplus\Rdelta).
\end{align*}
\end{prop}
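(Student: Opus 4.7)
The plan is to apply the fibre sequence of Proposition \ref{loops fibre sequence} to the functor $\ind_{0,0}^{p,q}E \in \Epq$ and then show the fibre is contractible using that $E$ is, in particular, strongly $(p+1,q)$-polynomial (resp.\ strongly $(p,q+1)$-polynomial).

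More precisely, setting $F = \ind_{0,0}^{p,q}E$ in Proposition \ref{loops fibre sequence} produces a homotopy fibre sequence of $\C$-spaces
\begin{equation*}
\res_{p,q}^{p+1,q}\ind_{p,q}^{p+1,q}\ind_{0,0}^{p,q}E(V) \rightarrow \ind_{0,0}^{p,q}E(V) \rightarrow \Omega^{(p,q)\R}\ind_{0,0}^{p,q}E(V\oplus \R).
\end{equation*}
The restriction functors satisfy $\res_{0,0}^{p+1,q} = \res_{0,0}^{p,q}\circ \res_{p,q}^{p+1,q}$ by definition, and passing to right adjoints gives the natural isomorphism $\ind_{p,q}^{p+1,q}\ind_{0,0}^{p,q} \cong \ind_{0,0}^{p+1,q}$ (this is the factorisation of induction functors encoded by the commutative diagram of induction functors in the Remark following Definition \ref{induction definition}). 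So the fibre term is naturally $\res_{p,q}^{p+1,q}\ind_{0,0}^{p+1,q}E(V)$.

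Now, since $E$ is $(p,q)$-polynomial, by Definition \ref{def: polynomial} it is strongly $(p+1,q)$-polynomial, so Corollary \ref{poly implies ind contractible} tells us that $\ind_{0,0}^{p+1,q}E$ is objectwise contractible. The restriction functor $\res_{p,q}^{p+1,q}$ is simply precomposition with the inclusion $i_{p,q}^{p+1,q}$, so it preserves objectwise contractibility. Thus the fibre of the above sequence is contractible and the second map
\begin{equation*}
\ind_{0,0}^{p,q}E(V) \rightarrow \Omega^{(p,q)\R}\ind_{0,0}^{p,q}E(V\oplus \R)
\end{equation*}
is a weak equivalence of $\C$-spaces.

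The argument for the $\R^\delta$ case is identical, using the second fibre sequence of Proposition \ref{loops fibre sequence} and the fact that $E$ is strongly $(p,q+1)$-polynomial to conclude that $\ind_{0,0}^{p,q+1}E$ is objectwise contractible. The only conceptual point to verify carefully is the identification $\ind_{p,q}^{p+1,q}\ind_{0,0}^{p,q} \cong \ind_{0,0}^{p+1,q}$ (and the $(p,q+1)$ analogue), which I expect to be the main subtlety but follows formally from the adjunction and the compatibility of the underlying inclusions $i_{0,0}^{p,q}$, $i_{p,q}^{p+1,q}$, $i_{0,0}^{p+1,q}$.
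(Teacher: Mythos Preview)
Your argument has a genuine gap at the last step. From a homotopy fibre sequence $A \to B \to C$ of (pointed) $\C$-spaces with $A$ objectwise contractible, you \emph{cannot} conclude that $B \to C$ is a weak equivalence of $\C$-spaces. The long exact sequence on (equivariant) homotopy groups gives isomorphisms on $\pi_n^H$ for $n\geq 1$ and injectivity on $\pi_0^H$, but surjectivity on $\pi_0^H$ can fail: the homotopy fibre only sees the basepoint component of the target. So ``fibre contractible $\Rightarrow$ weak equivalence'' is not valid without further input.

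The paper's proof addresses exactly this point. After producing the same fibre sequence you wrote down and observing that the fibre $\res_{p,q}^{p+1,q}\ind_{0,0}^{p+1,q}E(V)$ is contractible (via Corollary \ref{poly implies ind contractible}), it does \emph{not} stop there. Instead it invokes Lemma \ref{conn at infinity lemma}: a map between strongly $(p+1,q)$-polynomial functors with objectwise contractible homotopy fibre and connected-at-infinity target is an objectwise weak equivalence. This reduces the problem to checking that both $\ind_{0,0}^{p,q}E$ and the functor $V\mapsto \Omega^{(p,q)\R}\ind_{0,0}^{p,q}E(V\oplus\R)$ are strongly $(p+1,q)$-polynomial, and that the latter is connected at infinity; these verifications (deferred to \cite[Proposition 5.12]{BO13}) are what actually close the $\pi_0$ gap. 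Your proposal is correct up through identifying the fibre, but you need to supply this extra polynomiality/connectedness argument rather than asserting the weak equivalence directly.
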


\begin{proof}
We will prove the first weak equivalence, since the second follows by a similar argument.

If $p=q=0$, then there is nothing to prove, since $E$ is constant and $\ind_{0,0}^{0,0}E\simeq E$ by the enriched Yoneda lemma. 

Let $p,q$ be such that at least one of $p,q$ is non-zero. By Proposition \ref{loops fibre sequence}, there exists a $\C$-homotopy fibre sequence
\begin{equation*}
    \res_{p,q}^{p+1,q}\ind_{0,0}^{p+1,q}E(V)\rightarrow \ind_{0,0}^{p,q}E(V)\rightarrow\Omega^{p,q\R}\ind_{0,0}^{p,q}E(V\oplus\R).
\end{equation*}
We know that $\ind_{0,0}^{p+1,q}E(V)$ is a contractible $\C$-space, since $E$ is strongly $(p+1,q)$-polynomial and by Corollary \ref{poly implies ind contractible}. Thus, if we can show that both $\ind_{0,0}^{p,q}E$ and $$F:V\mapsto \Omega^{p,q\R}\ind_{0,0}^{p,q}E(V\oplus\R)$$ are strongly $(p+1,q)$-polynomial and $F$ is connected at infinity, then Lemma \ref{conn at infinity lemma} gives the weak equivalence. Showing this is the same as for \cite[Proposition 5.12]{BO13}.
\end{proof}

\subsection{The $(p,q)$-homogeneous model structure}\label{sec:homog ms}
We now construct a right Bousfield localisation of the $(p,q)$-polynomial model structure in order to build a model structure on $\Ezero$ analogous to the $n$-homogeneous structure in \cite[Proposition 6.9]{BO13}. The cofibrant-fibrant objects of this model structure are the projectively cofibrant $(p,q)$-homogeneous functors and the weak equivalences are detected by derivatives. This model structure allows for the classification of $(p,q)$-homogeneous functors in terms of a Quillen equivalence, see Theorem \ref{boclassification}. 

\begin{prop}\index{$(p,q)\homog\Ezero$}
There exists a model structure on $\Ezero$ whose cofibrant-fibrant objects are the $(p,q)$-homogeneous functors that are cofibrant in the projective model structure on $\Ezero$. Fibrations are the same as $(p,q)\poly\Ezero$ and weak equivalences are morphisms $f$ such that $\res_{0,0}^{p,q}\ind_{0,0}^{p,q}T_{p+1,q}T_{p,q+1}f$ is an objectwise weak equivalence. 
We call this the $(p,q)$-homogeneous model structure on $\Ezero$ and denote it by $(p,q)\homog\Ezero$. 

\noindent There is a Quillen adjunction 
\begin{equation*}
    \Id: (p,q)\homog\Ezero\rightleftarrows (p,q)\poly\Ezero:\Id
\end{equation*}
\end{prop}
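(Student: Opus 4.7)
The plan is to construct this model structure as a right Bousfield localisation (cellularisation) of the $(p,q)$-polynomial model structure $(p,q)\poly\Ezero$ of Proposition \ref{polynomial model structure}, in the sense of Hirschhorn. Right Bousfield localisation leaves the fibrations unchanged, which immediately gives the claim about fibrations matching those of $(p,q)\poly\Ezero$. First I would verify the hypotheses needed to apply Hirschhorn's theorem: that $(p,q)\poly\Ezero$ is right proper and cellular. Properness was recorded in Proposition \ref{polynomial model structure}, and cellularity is inherited from the cellular projective model structure of Proposition \ref{proj model structure} through the Bousfield--Friedlander localisation, which preserves cellularity under the standard hypotheses that $T_{p+1,q}T_{p,q+1}$ satisfies (by Theorem \ref{weiss6.3.1} and Corollary \ref{tlmtpq=tpq}).

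Next, following Barnes and Oman \cite[Proposition 6.9]{BO13}, I would take the localising set $K$ to consist of a set of representatives of (homotopy classes of) projectively cofibrant $(p,q)$-homogeneous functors sufficient to detect $\res_{0,0}^{p,q}\ind_{0,0}^{p,q}$-equivalences; concretely, cofibrant replacements (in the $(p,q)$-polynomial structure) of the objects obtained by applying the derivative construction to the representables $\Jzero(V,-)$ for $V \in \Jzero$, composed with suitable smash products with the generating cofibrations of $\C\TTop_*$. Hirschhorn's theorem then produces a cellular, right proper, $\C$-topological model structure on $\Ezero$ with the same fibrations as $(p,q)\poly\Ezero$, with $K$-colocal cofibrations, and with $K$-colocal equivalences as weak equivalences.

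The main technical step is to identify the $K$-colocal equivalences with those maps $f$ for which $\res_{0,0}^{p,q}\ind_{0,0}^{p,q}T_{p+1,q}T_{p,q+1}f$ is an objectwise weak equivalence. The key input is the adjunction between $\res_{0,0}^{p,q}$ and $\ind_{0,0}^{p,q}$ together with Proposition \ref{5.12}, which ensures that $\ind_{0,0}^{p,q}$ sends $(p,q)$-polynomial objects to $(p,q)\Omega$-spectra. A map $f$ is a $K$-colocal equivalence if and only if, after passing through the fibrant replacement $T_{p+1,q}T_{p,q+1}$ in $(p,q)\poly\Ezero$, the derived mapping spaces from each $A \in K$ to $f$ become weak equivalences; passing through the derivative adjunction this becomes the claim that $\res_{0,0}^{p,q}\ind_{0,0}^{p,q}T_{p+1,q}T_{p,q+1}f$ is an objectwise weak equivalence. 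The main obstacle here is checking the equivariant subtlety of Proposition \ref{5.12} with care, so that the derivative of a $(p,q)$-polynomial functor really is a $(p,q)\Omega$-spectrum in the sense of Lemma \ref{omega spectra lemma}, and the adjunction argument runs through both at $e$ and at $\C$.

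Finally, I would characterise the cofibrant-fibrant objects: fibrancy in $(p,q)\homog\Ezero$ equals fibrancy in $(p,q)\poly\Ezero$, hence gives $(p,q)$-polynomial functors; being $K$-colocal cofibrant together with projective cofibrancy forces the $(p,q)$-polynomial approximation $T_{p,q}$ to be contractible, i.e.\ $(p,q)$-reduced, by the argument of \cite[Proposition 6.9]{BO13}. Combining these yields that the cofibrant-fibrant objects are exactly the projectively cofibrant $(p,q)$-homogeneous functors. The Quillen adjunction is then formal: the identity $\Id\colon (p,q)\homog\Ezero \to (p,q)\poly\Ezero$ sends cofibrations to cofibrations (the cellularisation only shrinks this class), and any acyclic cofibration in $(p,q)\homog\Ezero$ has the left lifting property against all fibrations, which coincide in the two structures, so it is an acyclic cofibration in $(p,q)\poly\Ezero$ as well.
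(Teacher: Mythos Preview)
Your overall strategy---right Bousfield localisation of $(p,q)\poly\Ezero$ in the sense of Hirschhorn---is exactly what the paper does, and your remarks on properness, cellularity, and the formal Quillen adjunction are correct. The difference lies in the choice of localising set and in the identification of the weak equivalences.

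The paper takes the set
\[
K_{p,q}=\{\Jpq(V,-):V\in\Jzero\}
\]
directly. With this choice the description of the $K_{p,q}$-colocal equivalences is immediate: by the very definition of induction (Definition~\ref{induction definition}) one has $\ind_{0,0}^{p,q}F(V)=\Nat_{0,0}(\Jpq(V,-),F)$, so after fibrant replacement $T_{p+1,q}T_{p,q+1}$ the colocal equivalence condition is literally the statement that $\res_{0,0}^{p,q}\ind_{0,0}^{p,q}T_{p+1,q}T_{p,q+1}f$ is an objectwise weak equivalence. No adjunction argument and no appeal to Proposition~\ref{5.12} is needed here; that proposition is only used later to show that $\ind_{0,0}^{p,q}\varepsilon^*$ is right Quillen into the \emph{stable} model structure.

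Your proposed set $K$---cofibrant replacements of derivatives of representables, smashed with generating cofibrations---is both vaguer and more elaborate than necessary, and it is not clear that the colocal equivalences for that set would coincide with those for $K_{p,q}$ without further argument. Replacing it by $K_{p,q}$ removes the obstacle you flag and makes the identification of weak equivalences a one-line consequence of the definition of $\ind_{0,0}^{p,q}$. The characterisation of cofibrant-fibrant objects then follows as in \cite[Proposition~6.9]{BO13}, using that the $K_{p,q}$-colocal objects among the $(p,q)$-polynomial functors are precisely those with contractible $T_{p,q}$ (via Lemma~\ref{hofiblemma} and Corollary~\ref{poly implies ind contractible}).
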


\begin{proof}Right Bousfield localisation of $(p,q)$-poly-$\Ezero$ with respect to the set of objects 
\begin{equation*}
    K_{p,q}=\{\Jpq(V,-):V\in \Jzero\}
\end{equation*}
yields the desired model structure. Since $(p,q)$-poly-$\Ezero$ is proper and cellular, we know that the right Bousfield localisation $R_{K_{p,q}}((p,q)\poly\Ezero)$ exists by \cite[Theorem 5.1.1]{Hir03}. 

The weak equivalences of $R_{K_{p,q}}((p,q)\poly\Ezero)$ are the $K_{p,q}$-colocal equivalences. That is, a morphism $f:X\rightarrow Y$ is a weak equivalence in $R_{K_{p,q}}((p,q)\poly\Ezero)$ if and only if 
\begin{equation*}
    \Nat_{0,0}(\Jpq(V,-),T_{p+1,q}T_{p,q+1}X)\rightarrow \Nat_{0,0}(\Jpq(V,-),T_{p+1,q}T_{p,q+1}Y)
\end{equation*}
is a weak equivalence in $\CTop_*$, for all $V\in\Jzero$. By Definition \ref{induction definition}, we see that this map is exactly $\res_{0,0}^{p,q}\ind_{0,0}^{p,q}T_{p+1,q}T_{p,q+1}f(V)$. Therefore, $f$ is a weak equivalence if and only if $\res_{0,0}^{p,q}\ind_{0,0}^{p,q}T_{p+1,q}T_{p,q+1}f$ is an objectwise weak equivalence as desired. 
\end{proof}

\begin{rem}
Detecting weak equivalences via $\ind_{0,0}^{p,q}T_{p+1,q}T_{p,q+1}$ can be difficult, since the induction functor $\ind_{0,0}^{p,q}$ is complex. In unitary calculus, Taggart shows that a map is an $\ind_{0}^n T_n$-equivalence if and only if it is a $D_n$-equivalence, where $D_nF$ is the homotopy fibre of $T_nF\rightarrow T_{n-1}F$ (see \cite[Proposition 8.2]{Tag22unit}). Via a similar proof, one can show that a map is an $\ind_{0,0}^{p,q}T_{p+1,q}T_{p,q+1}$-equivalence if and only if it is a $D_{p,q}$-equivalence.  
\end{rem}

\section{The equivariant classification theorem}
The main result of orthogonal calculus is the classification of $n$-homogeneous functors, as functors fully determined by a category of spectra. This is given by the classification theorem of Weiss \cite[Theorem 7.3]{Wei95}, which states that an $n$-homogeneous functor is weakly equivalent to a functor of the form $V\mapsto \Omega^\infty [(S^{nV}\wedge \Psi)_{hO(n)}]$, where $\Psi$ is an orthogonal spectrum with an action of $O(n)$. In \cite{BO13}, this classification is derived as a Quillen equivalence on the model categories constructed. As a result, the homotopy fibres of the maps $T_{n+1}X\rightarrow T_nX$ for an input functor $X$, which are $n$-homogeneous, stand a chance of begin computed. 

In this section, we construct two Quillen equivalences. These Quillen equivalences form a zig-zag of equivalences between the $(p,q)$-homogeneous model structure on $\Ezero$ and the stable model structure on $\C Sp^O[O(p,q)]$. 
\[\begin{tikzcd}
	{(p,q)\homog C_2\mathcal{E}_{0,0}} && {O(p,q)C_2\mathcal{E}_{p,q}^s} && {C_2Sp^O[O(p,q)]}
	\arrow["{\ind_{0,0}^{p,q}\varepsilon^*}"', shift right=2, from=1-1, to=1-3]
	\arrow["{\res_{0,0}^{p,q}/O(p,q)}"', shift right=2, from=1-3, to=1-1]
	\arrow["{(\alpha_{p,q})_!}", shift left=2, from=1-3, to=1-5]
	\arrow["{\alpha_{p,q}^*}", shift left=2, from=1-5, to=1-3]
\end{tikzcd}\]

In the same way as for Barnes and Oman in \cite[Section 10]{BO13}, this leads to a classification theorem for $(p,q)$-homogeneous functors (Theorem \ref{weissclassification}), as functors fully determined by genuine orthogonal $\C$-spectra with an action of $O(p,q)$.

\subsection{The intermediate category as a category of spectra}

In this section we show that the $(p,q)$-th intermediate category $\OEpq$ is Quillen equivalent to the category of $O(p,q)$-equivariant objects in orthogonal $\C$-spectra. Thus, the $(p,q)$-derivative of an input functor (see Definition \ref{jzero and ezero def}) can be described in terms of these spectra. This section is analogous to \cite[Section 8]{BO13}, where the only differences are due to the additional $\C$-action, which does not effect the $O(p,q)$-equivariance of maps considered. The resulting Quillen equivalence 
\begin{equation*}
    (\alpha_{p,q})_{!}: \OEpq \rightleftarrows \C Sp^O [O(p,q)] : \alpha_{p,q}^*
\end{equation*}   
forms one half of the zig-zag of equivalences which gives the classification of $(p,q)$-homogeneous functors, see Theorem \ref{zigzagclassification}. The spectrum $(\alpha_{p,q})_{!}F$ for a functor $F\in \OEpq$ is the categorification of the spectrum $\Theta F$ constructed in \cite[Section 2]{Wei95}.

We begin by describing the category of orthogonal $\C$-spectra. Details of these constructions have been discussed by Mandell and May in \cite[Section II.4]{MM02}. 

\begin{definition}\label{def: orth c2 spectra}\index{$\C Sp^O$}
The \emph{category of orthogonal $\C$-spectra} $\C Sp^O$ is the category $\C\mathcal{E}_{1,0}$.

\noindent This category has a cofibrantly generated proper stable model structure where the cofibrations are $q$-cofibrations and the weak equivalences are the $(1,0)\pi_*$-equivalences (see Definition \ref{def: pq pi equiv}). 
\end{definition}

\begin{rem}
Sometimes in the literature, an orthogonal $G$-spectrum is defined as a collection of based spaces $\{X_n\}_{n\in \mathbb{N}}$ with an action of $G\times O(n)$ on each $X_n$. The structure maps are $G$-equivariant maps 
\begin{equation*}
    X_n\wedge S^1\rightarrow X_{n+1},
\end{equation*}
such that the iterated structure maps 
\begin{equation*}
    X_n\wedge S^m \rightarrow X_{n+m}
\end{equation*}
are $O(m)\times O(n)$-equivariant, where $G$ acts trivially on $S^m$. 

This is what we refer to as a naive orthogonal $G$-spectrum. Genuine $G$-spectra are indexed on a complete $G$-universe of all $G$-representations, whereas naive $G$-spectra are indexed on the trivial $G$-universe. That is, naive $G$-spectra are just spectra with an action of $G$ on each level, and $G$-equivariant structure maps. These two descriptions of $G$-spectra are categorically equivalent, however they are not homotopically equivalent (see \cite[Section 9.3]{HHR21}), in that the most natural model structures associated to these categories are not Quillen equivalent. In what follows, we consider orthogonal spectra that are genuine with respect to $\C$ and naive with respect to $O(p,q)$. 
\end{rem}

\begin{definition}\index{$\C Sp^O [O(p,q)]$}
The category of \emph{$O(p,q)$-objects in orthogonal $\C$-spectra}, $\C Sp^O [O(p,q)]$, is the category of $O(p,q)$-objects in $\C \mathcal{E}_{1,0}$ and $O(p,q)$-equivariant maps. An $O(p,q)$-object in $\C \mathcal{E}_{1,0}$ is a $\C\TTop_*$-enriched functor from $\C\mathcal{J}_{1,0}$ to $\COTop_*$. 
\end{definition}

\begin{thm}\label{thm: stable ms on C2O-spectra}
The category of genuine orthogonal $\C$-spectra with an action of $O(p,q)$, $\C Sp^O [O(p,q)]$, has a cofibrantly generated proper stable model structure where fibrations and weak equivalences are defined by the underlying model structure on $\C Sp^O$ above.
\end{thm}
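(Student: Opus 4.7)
The plan is to transfer the stable model structure from $\C Sp^O$ along the adjunction
\[
F : \C Sp^O \rightleftarrows \C Sp^O[O(p,q)] : U,
\]
where $U$ is the forgetful functor and its left adjoint $F$ sends a spectrum $X$ to the free $O(p,q)$-object $O(p,q)_+ \wedge X$, with $O(p,q)$ acting on itself by translation and the given (naive) action. Completeness and cocompleteness of $\C Sp^O[O(p,q)]$ are immediate from the corresponding property of $\C Sp^O$, since $O(p,q)$-objects in a complete and cocomplete category inherit limits and colimits from the underlying category. The candidate generating cofibrations and generating acyclic cofibrations will be $FI$ and $FJ$, where $I,J$ are the generating sets for the stable model structure on $\C Sp^O$ guaranteed by Definition \ref{def: orth c2 spectra}.

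First I would verify Kan's transfer criterion \cite[Theorem 11.3.2]{Hir03}: smallness of the domains of $FI$ and $FJ$ is immediate (since $F$ preserves smallness and $\C Sp^O$ is cofibrantly generated), so the key condition is that every relative $FJ$-cell complex is a weak equivalence (equivalently, that $F$ sends acyclic cofibrations of $\C Sp^O$ to weak equivalences after applying $U$). Since $U(F(-)) = O(p,q)_+ \wedge (-)$, this reduces to showing that smashing with $O(p,q)_+$ preserves $(1,0)\pi_*$-equivalences and cofibrations in $\C Sp^O$. Because $O(p,q)$ is a compact Lie group, $O(p,q)_+$ is a cofibrant pointed $\C$-space (with trivial $\C$-action on $O(p,q)$ via $\beta^*$ at the space level and naive $O(p,q)$-translation), so smashing with it preserves cofibrations and $\C$-equivariant weak equivalences levelwise, and therefore preserves the stable weak equivalences which are detected by the functors $(1,0)\pi_k^H$ for closed subgroups $H \leq \C$. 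This is the step where one must be careful: the $(1,0)\pi_*$-equivalences are defined using fixed points only with respect to $\C$, while the $O(p,q)$-action is naive, so the argument is essentially the standard observation that free group actions on a compact Lie group preserve levelwise weak equivalences of spectra, and hence preserve stable equivalences.

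Having established the transferred cofibrantly generated model structure, I would next check properness and cellularity. Right properness follows from right properness of $\C Sp^O$ (Definition \ref{def: orth c2 spectra}), because pullbacks in $\C Sp^O[O(p,q)]$ are computed in $\C Sp^O$ and both weak equivalences and fibrations are detected by $U$. Left properness follows similarly once one observes that pushouts along cofibrations are also computed underlyingly up to applying $F$ to cells; concretely, every cofibration in $\C Sp^O[O(p,q)]$ is a retract of a relative $FI$-cell complex, and $U$ applied to such a cell is a cofibration in $\C Sp^O$ (a wedge of cells of the form $O(p,q)_+ \wedge i$ with $i \in I$), so pushouts along cofibrations are computed as pushouts in $\C Sp^O$ along underlying cofibrations. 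Cellularity is inherited from the cellular structure on $\C Sp^O$ by the standard argument for transferred model structures along a left adjoint preserving the requisite smallness and effective monomorphism conditions.

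Finally, stability of the transferred model structure follows from stability of the underlying structure on $\C Sp^O$: suspension and loop functors in $\C Sp^O[O(p,q)]$ commute with $U$ up to natural isomorphism (they are defined levelwise and $O(p,q)$ acts trivially on the suspension coordinate), so the unit and counit of the suspension-loop adjunction are weak equivalences on underlying spectra and hence weak equivalences in $\C Sp^O[O(p,q)]$ by definition. The main obstacle in this plan is the verification that $O(p,q)_+ \wedge (-)$ preserves the stable $(1,0)\pi_*$-equivalences, where the asymmetry between the genuine $\C$-action and the naive $O(p,q)$-action must be handled carefully; everything else is formal consequence of Kan's transfer theorem applied to a well-behaved adjunction.
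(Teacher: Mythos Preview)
The paper states this theorem without proof, so there is no authorial argument to compare against; your transfer argument along the free--forgetful adjunction is the standard route and is essentially correct.

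There is one inaccuracy worth flagging. You write that $O(p,q)_+$ carries the \emph{trivial} $\C$-action. This is not the case: by Definition~\ref{def:O(p,q) and matrix A}, $O(p,q)$ is equipped with the conjugation $\C$-action (with fixed points $O(p)\times O(q)$), and the left adjoint is $\beta_!(-) = (O(p,q)\rtimes\C)_+\wedge_{\C}(-)$ as in Remark~\ref{remark: model structure on semi direct}, whose underlying $\C$-space is $O(p,q)_+\wedge(-)$ with the twisted (conjugation) action on the $O(p,q)$ factor. Fortunately this does not break your argument: $O(p,q)$ with its smooth conjugation $\C$-action admits a $\C$-CW structure (e.g.\ by Illman's theorem), so $O(p,q)_+$ is still cofibrant in the fine model structure on $\C\TTop_*$, and smashing with it preserves level equivalences and hence $(1,0)\pi_*$-equivalences as you claim. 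You should also note that $U$ preserves colimits (they are created in $\C Sp^O$), which is what lets you reduce the acyclicity of relative $FJ$-cell complexes to the acyclicity of each $U(Fj)$; you use this implicitly but it deserves a sentence. With these two clarifications the proposal is sound.
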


We want to define a functor $\C Sp^O [O(p,q)]\rightarrow \OEpq$. This functor will be called $\alpha_{p,q}^*$, and it is analogous to the $\alpha_n^*$ functor built in \cite[Section 8]{BO13}.

\begin{definition}\index{$\alpha_{p,q}^*$}
Define the functor $\alpha_{p,q}:\Jpq\rightarrow\C\mathcal{J}_{1,0}$ by 
\begin{equation*}
    U\mapsto \mathbb{R}^{p,q}\otimes U = (p,q)U
\end{equation*}
on objects, and 
\begin{equation*}
    (f,x)\mapsto(\mathbb{R}^{p,q}\otimes f, x)
\end{equation*}
on morphisms. 
\end{definition}

Since the map on morphisms is $\left(O(p,q)\rtimes \C\right)$-equivariant, $\alpha_{p,q}$ is enriched over $\left(O(p,q)\rtimes \C\right)$-spaces. 

This induces a functor 
\begin{equation*}
    \alpha_{p,q}^*:\C Sp^O[O(p,q)]\rightarrow \OEpq
\end{equation*}
defined by precomposition with $\alpha_{p,q}$. For $X\in\C Sp^O[O(p,q)]$ we define the $ (O(p,q)\rtimes \C)$- action on $(\alpha_{p,q}X)(V):=X((p,q)V)$ by 
\begin{equation*}
 X(g\sigma \otimes \sigma)\circ (g\sigma )_{X((p,q)V)}
\end{equation*}Here $ X(g\sigma \otimes \sigma)$ is the internal action on $X((p,q)V)$ induced by the action on $(p,q)V$, and $(g\sigma )_{X((p,q)V)}$ is the external action from $X((p,q)V)$ being an $\left( O(p,q)\rtimes \C\right)$-space. These two actions commute by construction. 

Checking that $\alpha_{p,q}^*X$ is well defined (i.e. that $\alpha_{p,q}^*X$ is $(O(p,q)\rtimes \C)\TTop_*$-enriched) is the same as checking that the map 
\begin{equation*}
    \Jpq(U,V)\rightarrow \COTop_*(X((p,q)V),X((p,q)V))
\end{equation*}
is $\left(O(p,q)\rtimes \C\right)$-equivariant. 

To do this we consider the following commutative diagram. We use the notation $(-)^*$ to mean pre-composition and $(-)_*$ to mean post-composition. Let $s$ denote the map $((g\sigma )^{-1}\otimes \sigma)^*\circ (g\sigma  \otimes \sigma)_*$, and $t$ be the map $(X((g\sigma )^{-1}\otimes \sigma))^*\circ (X(g\sigma  \otimes \sigma))_*$. We have abbreviated $\COTop_*$ to $S\TTop_*$ to save space ($S$ for semi-direct product). 
\[\begin{tikzcd}
	{\C\mathcal{J}_{p,q}(U,V)} & {\C\mathcal{J}_{1,0}((p,q)U,(p,q)V)} & {S\TTop_*(X((p,q)U),X((p,q)V))} \\
	\\
	{\C\mathcal{J}_{p,q}(U,V)} & {\C\mathcal{J}_{1,0}((p,q)U,(p,q)V)} & {S\TTop_*(X((p,q)U),X((p,q)V))}
	\arrow["t", from=1-3, to=3-3]
	\arrow["s", from=1-2, to=3-2]
	\arrow["g\sigma", from=1-1, to=3-1]
	\arrow["{\alpha_{p,q}}", from=1-1, to=1-2]
	\arrow["{\alpha_{p,q}}"', from=3-1, to=3-2]
	\arrow["X", from=1-2, to=1-3]
	\arrow["X"', from=3-2, to=3-3]
\end{tikzcd}\]
Given a pair $(f,x)\in \Jmor{p,q}{U}{V}$, by applying $\alpha_{p,q}^* X$ we get a a $\left( O(p,q)\rtimes \C\right)$-equivariant map
\begin{equation*}
    X(\mathbb{R}^{p,q}\otimes f): X((p,q)V)\rightarrow X((p,q)V).
\end{equation*}Therefore, the following two expressions are equal 
\begin{equation*}
    X(g\sigma \otimes V)\circ X(\mathbb{R}^{p,q}\otimes f,x)\circ X((g\sigma )^{-1}\otimes U)
\end{equation*}
\begin{equation*}
   (g\sigma )_{X(U)}\circ X(g\sigma \otimes V)\circ X(\mathbb{R}^{p,q}\otimes f,x)\circ X((g\sigma )^{-1}\otimes U)\circ (g\sigma )^{-1}_{X(V)}
\end{equation*}which by comparison to the commutative diagram tells us exactly that the map 
\begin{equation*}
    \Jpq(U,V)\rightarrow \COTop_*(X((p,q)V),X((p,q)V))
\end{equation*}
is $\left(O(p,q)\rtimes \C\right)$-equivariant, and hence that $\alpha_{p,q}^*X$ is well defined. 

\begin{rem}
Note that any other choice of internal action on $X((p,q)V)$ results in the failure of the diagram being commutative. For example, taking $X(g\sigma \otimes \id)$ as the internal action means that the left square in the diagram commutes only if $f$, from the pair $(f,x)$, is $\C$-equivariant, which is not necessarily the case.
\end{rem}

The left Kan extension of a functor $X\in \C Sp^O[O(p,q)]$ along $\alpha_{p,q}$ can be described by the following $\COTop_*$-enriched coend. 
\begin{equation*}
    ((\alpha_{p,q})_! (X))(V)=\int\limits^{U\in \Jpq} \CJ{1}{0} ((p,q)U,V)\wedge X(U).
\end{equation*}
We make this functor suitable enriched by `twisting' the action as in \cite[Definition 8.2]{BO13}. That is, we let $\C\mathcal{J}_{1,0}$ act on $\CJ{1}{0}((p,q)U,V)$ on the left by composition, and let $\C\mathcal{J}_{p,q}$ act on $\CJ{1}{0}((p,q)U,V)$ on the right by composition. It is not hard to show, by an argument of coends, that $(\alpha_{p,q})_!$ forms a left adjoint to $\alpha_{p,q}^*$. We now prove that this adjunction is indeed a Quillen equivalence. 

\begin{thm}\label{QEstabletospectra}
The adjoint pair 
\begin{equation*}
    (\alpha_{p,q})_{!}: \OEpq^s\rightleftarrows \C Sp^O [O(p,q)] : \alpha_{p,q}^*
\end{equation*}
is a Quillen equivalence, where both categories are equipped with their stable model structures (see Proposition \ref{prop: stable ms} and Theorem \ref{thm: stable ms on C2O-spectra}). 
\end{thm}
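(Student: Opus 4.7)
The plan is to follow the strategy of Barnes-Oman \cite[Section 8]{BO13}, adapted to the $\C$-equivariant setting, in three steps: first establish a Quillen pair at the level model structures, lift it to the stable model structures, and then verify the Quillen equivalence via a cofinality argument on indexing representations.

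First, I would show that the adjunction is a Quillen pair with respect to the projective (level) model structures on both sides. Since both model structures are cofibrantly generated, it suffices to check that $(\alpha_{p,q})_!$ sends generating (acyclic) cofibrations of $\OEpql$ to (acyclic) cofibrations of $\C Sp^O[O(p,q)]$ in its projective model structure. The generating cells on the left have the form $\Jpq(U,-)\wedge \beta_!(i)$, and combining the coend formula for the left Kan extension with the enriched Yoneda lemma yields
\[
(\alpha_{p,q})_!\bigl(\Jpq(U,-)\wedge \beta_!(i)\bigr) \;\cong\; \CJ{1}{0}((p,q)U,-)\wedge \beta_!(i),
\]
which is itself a generating cell on the right-hand side.

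Second, I would lift this Quillen pair to the stable model structures. Since cofibrations are unchanged by the Bousfield localisations on both sides, by Hirschhorn's criterion it suffices to show that $\alpha_{p,q}^*$ sends fibrant objects to fibrant objects; that is, that $\alpha_{p,q}^* Y$ is a $(p,q)\Omega$-spectrum whenever $Y$ is an $\Omega$-spectrum in $\C Sp^O[O(p,q)]$. By Lemma \ref{omega spectra lemma}, it suffices to check the two adjoint structure maps at $\R$ and $\Rdelta$. Unwinding the definition of $\alpha_{p,q}^*$, these are precisely the adjoint structure maps of $Y$ evaluated at the $\C$-representations $(p,q)\R = \R^{p,q}$ and $(p,q)\Rdelta = \R^{q,p}$, which are weak equivalences by hypothesis.

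Finally, I would verify that this Quillen pair is a Quillen equivalence by showing that $\alpha_{p,q}^*$ reflects weak equivalences between fibrant objects and that the derived unit is a weak equivalence on cofibrant objects. The key technical input — and the step I expect to be the main obstacle — is a cofinality observation: the $\C$-representations of the form $(p,q)V$ for $V \in \Jzero$ are cofinal in the indexing poset of finite-dimensional $\C$-subrepresentations of the universe $\bigoplus \RC$. Indeed $(p,q)\R^{a,b} \cong \R^{pa+qb,\,pb+qa}$, and for $p,q\geq 1$ both coordinates tend to infinity as $a,b\to\infty$. This cofinality gives a natural isomorphism $(p,q)\pi_k^H(\alpha_{p,q}^* Y) \cong \pi_k^H(Y)$ for all $k$ and all closed $H\leq\C$, so $\alpha_{p,q}^*$ both preserves and reflects stable weak equivalences. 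For the derived unit on a cofibrant $X \in \OEpq$, a transfinite cell induction reduces the problem to a single generating cell $\Jpq(U,-)\wedge \beta_!(i)$, where the unit becomes (after smashing with $\beta_!(i)$) the map $\Jpq(U,V)\to \CJ{1}{0}((p,q)U,(p,q)V)$ induced by $\alpha_{p,q}$. That this is a stable equivalence of functors in $V$ follows from the same cofinality argument; the delicate point, which requires the most care, is to confirm that the twisted $(O(p,q)\rtimes\C)$-action on the coend defining $(\alpha_{p,q})_!$ is compatible throughout with the enrichment on both sides, so that the identifications above are genuinely maps in $\OEpq^s$ and $\C Sp^O[O(p,q)]$ rather than just in the underlying spaces.
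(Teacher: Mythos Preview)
Your proposal is correct and follows essentially the same route as the paper, which also defers to \cite[Proposition 8.3]{BO13}: establish the level Quillen pair, pass to the stable structures, use cofinality of the representations $(p,q)V$ to see that $\alpha_{p,q}^*$ preserves and reflects weak equivalences, and verify the derived unit on generators. The only cosmetic differences are that the paper checks the right adjoint directly (preservation of levelwise fibrations and homotopy pullbacks) rather than the left adjoint on generating cofibrations, and reduces the unit check to the compact generators $O(p,q)_+\wedge\pqs$ and $(\C\times O(p,q))_+\wedge\pqs$ rather than doing a full cell induction over all $\Jpq(U,-)\wedge\beta_!(i)$.
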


\begin{proof}
The proof follows as in \cite[Proposition 8.3]{BO13}.
Since $\alpha_{p,q}^*$ is defined by precomposition, it preserves objectwise fibrations and objectwise acyclic fibrations. It can easily be shown that $\alpha_{p,q}^*$ also preserved homotopy pullbacks. Hence, it preserves stable fibrations. An argument using finality shows that $\alpha_{p,q}^*$ preserves and reflects weak equivalences, and therefore preserves stable acyclic fibrations, making the adjunction Quillen.

It remains to show that the Quillen adjunction is a Quillen equivalence. By Hovey \cite[Theorem 1.3.16]{Hov99}, it suffices to show that the derived unit is a weak equivalence. Since the categories are stable and $\alpha_{p,q}^*$ preserves coproducts, it suffices to do this for the generators of $\OEpq$. This can be done by plugging in the generators $O(p,q)_+\wedge \pqs$ and $(\C\times O(p,q))_+\wedge \pqs$ into the formula for the unit, where $(p,q)\mathbb{S}$ sends $V\in\Jzero$ to the one point compactification of $\R^{p+q\delta}\otimes V$, denoted $S^{(p,q)V}$. 
\end{proof}

\subsection{Induction as a Quillen functor}

In this section we construct a Quillen adjunction between the $(p,q)$-homogeneous model structure and the stable model structure on $\Epq$. The right adjoint of this adjunction will be the differentiation functor $\ind_{0,0}^{p,q}\varepsilon^*$. Moreover, we will show that this Quillen adjunction is in fact a Quillen equivalence between these categories. Combined with the Quillen equivalence of Theorem \ref{QEstabletospectra}, this allows for the classification of $(p,q)$-homogeneous functors in terms of orthogonal $\C$-spectra with an action of $O(p,q)$. 

The steps taken to construct this Quillen equivalence are similar to \cite[Section 9 and Section 10]{BO13}. We begin by explicitly proving the existence of a Quillen adjunction between the projective model structures on $\OEpq$ and $\Ezero$, which can then be extended via properties of Quillen adjunctions and Bousfield localisations. 

\begin{lem}\label{Epqlevel Ezero adjunction}
    There exists a Quillen adjunction 
\begin{equation*} \res_{0,0}^{p,q}/O(p,q):\OEpql\rightleftarrows\Ezero:\ind_{0,0}^{p,q}\varepsilon^*.
\end{equation*}
\end{lem}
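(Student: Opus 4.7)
The adjoint pair is the specialization to $(p,q)=(0,0)$, $(l,m)=(p,q)$ of the inflation-induction/restriction-orbit adjunction established earlier in the text, so adjointness is automatic. The content of the lemma is that this adjunction is Quillen with respect to the projective model structures. Since both $\OEpql$ and $\Ezero$ are cofibrantly generated, it suffices to check that $\res_{0,0}^{p,q}/O(p,q)$ sends each generator $\Jmor{p,q}{U}{-}\wedge \beta_!(i)$ to a cofibration of $\Ezero$ for $i\in I_\C$, and to an acyclic cofibration for $i\in J_\C$.

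The essential computation is an equivariant identification: for any $(O(p,q)\rtimes\C)$-space $A$ and pointed $\C$-space $X$, there is a $\C$-equivariant homeomorphism
\[
\bigl(A\wedge \beta_!(X)\bigr)/O(p,q)\;\cong\;A\wedge X,
\]
where the right-hand side carries the diagonal $\C$-action and the $O(p,q)$-action on $A$ has been absorbed by the free $O(p,q)$-factor of $\beta_!(X)=(O(p,q)\rtimes\C)_+\wedge_\C X$. The key step is rewriting $\beta_!(X)\cong O(p,q)_+\wedge X$ with $O(p,q)$ acting by left translation on itself and trivially on $X$, while $\C$ acts by conjugation on $O(p,q)$ and by its given action on $X$; taking diagonal $O(p,q)$-orbits on $A\wedge O(p,q)_+\wedge X$ then collapses onto $A\wedge X$ via the map $[a,g,x]\mapsto (g^{-1}a,x)$, and a direct check shows the residual $\C$-action is diagonal. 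Applying this levelwise with $A=\Jmor{p,q}{U}{V}$ yields
\[
\res_{0,0}^{p,q}/O(p,q)\bigl(\Jmor{p,q}{U}{-}\wedge \beta_!(i)\bigr)\;\cong\;\Jmor{p,q}{U}{-}\wedge i
\]
as a morphism in $\Ezero$.

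By Lemma \ref{sphereandmorpharecofibrant}, $\Jmor{p,q}{U}{-}$ is cofibrant in $\Ezero$, and by Proposition \ref{proj model structure} the projective model structure on $\Ezero$ is $\C$-topological. The pushout-product axiom for a $\C\TTop_*$-enriched model category then ensures that smashing a cofibrant object with a (acyclic) cofibration of $\C\TTop_*$ produces a (acyclic) cofibration of $\Ezero$, completing the verification. The principal subtlety is the equivariant bookkeeping in the displayed isomorphism: one must track how the $O(p,q)$- and $\C$-actions interact through the semi-direct product structure and confirm that the $\C$-action on the quotient recovers exactly the diagonal action on $A\wedge X$, rather than something twisted by conjugation.
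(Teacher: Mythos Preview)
Your proof is correct and follows essentially the same route as the paper: check on generating (acyclic) cofibrations, identify the image of $\Jmor{p,q}{U}{-}\wedge\beta_!(i)$ under $\res_{0,0}^{p,q}/O(p,q)$ with $\Jmor{p,q}{U}{-}\wedge i$, and invoke Lemma~\ref{sphereandmorpharecofibrant} together with the $\C$-topological structure. Your version is more explicit about the equivariant identification $(A\wedge\beta_!(X))/O(p,q)\cong A\wedge X$, which the paper simply asserts.
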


\begin{proof}
The generating (acyclic) cofibrations of $\OEpq^l$ are of the form 
\begin{equation*}
\Jpq(U,-)\wedge O(p,q) \wedge i,    
\end{equation*}
where $i$ is a generating (acyclic) cofibration of the fine model structure on $\CTop_*$. 

Applying the left adjoint gives $\res_{0,0}^{p,q}\Jpq(U,-)\wedge i$ in $\Ezero$. From Lemma \ref{sphereandmorpharecofibrant}, we know that the functor $\res_{0,0}^{p,q}\Jpq(U,-)$ is cofibrant in $\Ezero$, hence $\res_{0,0}^{p,q}\Jpq(U,-)\wedge i$ is a (acyclic) cofibration in $\Ezero$. Therefore, by Hovey \cite[Lemma 2.1.20]{Hov99}, the adjunction is Quillen. 
\end{proof}

This adjunction can be extended to the $(p,q)$-polynomial model structure, via a composition of Quillen adjunctions. Furthermore, since the stable model structure on $\Epq$ is a left Bousfield localisation of the projective model structure, the Theorems of Hirschhorn \cite[Theorem 3.1.6, Proposition 3.1.18]{Hir03} can be used to additionally extend the adjunction to the stable model structure. 

\begin{lem}
There exists a Quillen adjunction 
\begin{equation*} \res_{0,0}^{p,q}/O(p,q):\OEpq^s\rightleftarrows (p,q)\poly\Ezero:\ind_{0,0}^{p,q}\varepsilon^*.
\end{equation*}
\end{lem}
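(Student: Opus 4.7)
The plan is to bootstrap from the projective-level Quillen adjunction in Lemma \ref{Epqlevel Ezero adjunction} in two stages, exploiting the fact that both target model structures are left Bousfield localizations of their projective counterparts. The stable structure $\OEpq^s$ is a localization of $\OEpq^l$ at the maps $\{k_{V,W}^{p,q}\}$ by Proposition \ref{prop: stable ms}, and $(p,q)\poly\Ezero$ is a localization of $\Ezero$ by Proposition \ref{polynomial model structure}. The geometric content that makes descent possible is Proposition \ref{5.12}.

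First I would compose Lemma \ref{Epqlevel Ezero adjunction} with the identity Quillen adjunction $\Id:\Ezero \rightleftarrows (p,q)\poly\Ezero:\Id$ arising from the Bousfield localization. This immediately yields a Quillen adjunction
\begin{equation*}
\res_{0,0}^{p,q}/O(p,q):\OEpq^l \rightleftarrows (p,q)\poly\Ezero:\ind_{0,0}^{p,q}\varepsilon^*
\end{equation*}
at the level of the projective model structure on $\OEpq$.

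Next I would upgrade the left side from $\OEpq^l$ to $\OEpq^s$. By the descent criterion for Quillen adjunctions through a left Bousfield localization (\cite[Theorem 3.1.6, Proposition 3.1.18]{Hir03}), it suffices to check that the right adjoint $\ind_{0,0}^{p,q}\varepsilon^*$ sends every fibrant object of $(p,q)\poly\Ezero$ to a fibrant object of $\OEpq^s$; equivalently, that $\ind_{0,0}^{p,q}\varepsilon^* E$ is a $(p,q)\Omega$-spectrum whenever $E$ is $(p,q)$-polynomial. For this, let $E$ be $(p,q)$-polynomial. Proposition \ref{5.12} supplies weak equivalences of $\C$-spaces
\begin{equation*}
\ind_{0,0}^{p,q}E(V)\longrightarrow \Omega^{(p,q)\R}\ind_{0,0}^{p,q}E(V\oplus\R), \qquad \ind_{0,0}^{p,q}E(V)\longrightarrow \Omega^{(p,q)\Rdelta}\ind_{0,0}^{p,q}E(V\oplus\Rdelta),
\end{equation*}
for every $V\in\Jzero$. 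These are precisely the two conditions of Lemma \ref{omega spectra lemma} characterising a $(p,q)\Omega$-spectrum. The $\varepsilon^*$ component merely endows the output with the trivial $O(p,q)$-action, which is compatible with the model structure on $\OEpq^s$ since fibrancy there is detected after restriction along $\beta^*$ to $\C\TTop_*$.

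The main potential obstacle is the bookkeeping around two simultaneous Bousfield localizations, but the $(p,q)\poly$-side only enters through its class of fibrant objects, which is exactly what Proposition \ref{5.12} controls. Once that input is invoked, the descent along $\OEpq^l\to\OEpq^s$ is formal, and the two stages combine to give the stated Quillen adjunction.
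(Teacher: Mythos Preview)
Your proposal is correct and follows essentially the same approach as the paper: compose the projective-level adjunction of Lemma \ref{Epqlevel Ezero adjunction} with the identity localization to $(p,q)\poly\Ezero$, then descend along the left Bousfield localization $\OEpq^l\to\OEpq^s$ via \cite[Theorem 3.1.6, Proposition 3.1.18]{Hir03} by invoking Proposition \ref{5.12} and Lemma \ref{omega spectra lemma} to see that $\ind_{0,0}^{p,q}\varepsilon^*$ sends $(p,q)$-polynomial functors to $(p,q)\Omega$-spectra.
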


\begin{proof}
There exists a Quillen adjunction 
\begin{equation*}
    \Id:\Ezero\rightleftarrows (p,q)\poly\Ezero:\Id,
\end{equation*}
since the $(p,q)$-polynomial model structure is a left Bousfield localisation of the projective model structure. Since the composition of Quillen adjunctions is a Quillen adjunction, combining the above adjunction with Lemma \ref{Epqlevel Ezero adjunction} gives a Quillen adjunction 
\begin{equation*}
    \res_{0,0}^{p,q}/O(p,q):\OEpq^l\rightleftarrows (p,q)\poly\Ezero:\ind_{0,0}^{p,q}\varepsilon^*.
\end{equation*}
We now use \cite[Theorem 3.1.6, Proposition 3.1.18]{Hir03} to show that this Quillen equivalence passes to $\OEpq^s$. That is, we show that $\ind_{0,0}^{p,q}\varepsilon^*$ takes objects that are $(p,q)$-polynomial to $(p,q)\Omega$-spectra. This has been done in Proposition \ref{5.12} and Lemma \ref{omega spectra lemma}.
\end{proof}

Since the $(p,q)$-homogeneous model structure is a right Bousfield localisation of the $(p,q)$-polynomial model structure, the Theorems \cite[Theorem 3.1.6, Proposition 3.1.18]{Hir03} can again be used to extend this Quillen adjunction to the $(p,q)$-homogeneous model structure. 

\begin{lem}\label{QAstabletohomog}
There exists a Quillen adjunction 
\begin{equation*} \res_{0,0}^{p,q}/O(p,q):\OEpq^s\rightleftarrows (p,q)\homog\Ezero:\ind_{0,0}^{p,q}\varepsilon^*.
\end{equation*}
\end{lem}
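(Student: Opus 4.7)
The plan is to parallel the strategy of the previous lemma, using a right Bousfield localisation in place of a left Bousfield localisation. The starting data is the Quillen adjunction
\[\res_{0,0}^{p,q}/O(p,q):\OEpq^s\rightleftarrows (p,q)\poly\Ezero:\ind_{0,0}^{p,q}\varepsilon^*,\]
together with the observation that $(p,q)\homog\Ezero$ is, by construction, the right Bousfield localisation of $(p,q)\poly\Ezero$ at the set of objects $K_{p,q}=\{\Jpq(V,-):V\in \Jzero\}$. I would then invoke the Hirschhorn machinery \cite[Theorem 3.1.6, Proposition 3.1.18]{Hir03} for transferring Quillen adjunctions across a Bousfield localisation.

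Since a right Bousfield localisation leaves the class of fibrations unchanged, the right adjoint $\ind_{0,0}^{p,q}\varepsilon^*$ automatically continues to send fibrations of $(p,q)\homog\Ezero$ to fibrations of $\OEpq^s$. The content of the criterion therefore reduces to the right-Bousfield dual of the condition verified in the previous lemma: one needs to show that the left adjoint $\res_{0,0}^{p,q}/O(p,q)$ sends cofibrant objects of $\OEpq^s$ to $K_{p,q}$-colocal objects of $(p,q)\poly\Ezero$. Morally, the left Bousfield case in the previous lemma was governed by sending fibrant objects to local objects; the right Bousfield case here is governed dually by sending cofibrant objects to colocal ones.

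I would verify this condition by inspection on the generating cofibrations of $\OEpq^s$. These coincide with those of $\OEpq^l$ by Lemma \ref{objectwise model structure proof}, and are of the form $\Jpq(U,-)\wedge \beta_!(i)$ for $U\in\Jzero$ and $i$ a generating cofibration of $\C\TTop_*$. Applying $\res_{0,0}^{p,q}/O(p,q)$ to such a generator yields a cell whose underlying structure is assembled from copies of $\res_{0,0}^{p,q}\Jpq(U,-)$; since these representables are precisely the generating objects of $K_{p,q}$, the result is manifestly $K_{p,q}$-colocal. A cellular induction argument then extends the claim from generators to all cofibrant objects of $\OEpq^s$, which combined with Hirschhorn's criterion produces the desired Quillen adjunction.

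The main obstacle I foresee is pinning down the precise form of the Hirschhorn criterion that applies in the right Bousfield (rather than left Bousfield) setting and keeping the bookkeeping straight: fibrations are shared between $(p,q)\poly\Ezero$ and $(p,q)\homog\Ezero$, but the cofibrations and weak equivalences are not, so some care is needed to confirm that no hidden obstruction arises when trivial cofibrations of the homogeneous structure are tested. Once the correct statement of the criterion is in hand, however, the verification on generators is routine given the explicit form of the generating cells and of $K_{p,q}$.
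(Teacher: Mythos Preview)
Your approach is correct but takes the dual route to the paper's. Both arguments invoke Hirschhorn's criterion for passing a Quillen adjunction through a right Bousfield localisation, but they verify opposite halves of it.

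You propose to check the condition on the \emph{left} adjoint: that $\res_{0,0}^{p,q}/O(p,q)$ sends cofibrant objects to $K_{p,q}$-colocal objects. This works, but requires computing the left adjoint on generators, invoking closure of colocal objects under smashing with $\C$-CW complexes, and a cellular induction. The paper instead checks the condition on the \emph{right} adjoint: that $\ind_{0,0}^{p,q}\varepsilon^*$ sends $K_{p,q}$-colocal equivalences between fibrant objects to weak equivalences. This is essentially a tautology, because by Definition~\ref{induction definition} one has $\ind_{0,0}^{p,q}\varepsilon^* F(V)=\Nat_{0,0}(\Jpq(V,-),F)$, and the $K_{p,q}$-colocal equivalences between fibrant objects are \emph{defined} to be exactly those maps inducing weak equivalences on $\Nat_{0,0}(\Jpq(V,-),-)$. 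So the paper's verification is a single line, whereas yours is a short argument.

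What your approach buys is a more hands-on picture of why the colocalising set $K_{p,q}$ was chosen: it is tailored so that the image of the generating cells under the left adjoint lands in it. What the paper's approach buys is brevity, exploiting the fact that the right adjoint and the colocalising set are built from the same representables $\Jpq(V,-)$. Your concern about ``pinning down the precise form of the Hirschhorn criterion'' is well-founded; once you see that the right-adjoint condition is immediate here, there is no need to chase the dual formulation.
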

\begin{proof}
Let $f:X\rightarrow Y$ be a weak equivalence between fibrant objects in the $(p,q)$-homogeneous model structure. Then the map 
\begin{equation*}
    f^*:\Nat_{0,0}(\Jpq(V,-),X)\rightarrow \Nat_{0,0}(\Jpq(V,-),Y)
\end{equation*}
is a weak equivalence of $\C$-spaces for all $V\in\Jzero$, by definition of the right Bousfield localisation. Therefore, $\ind_{0,0}^{p,q}\varepsilon^*f$ is an objectwise weak equivalence. An application of \cite[Lemma 3.1.6, Proposition 3.1.18]{Hir03} now gives that $\ind_{0,0}^{p,q}\varepsilon^*$ is right Quillen. 
\end{proof}

The Quillen adjunctions between the model categories constructed are summarised in the following diagram (which we do not claim commutes). 

\[\begin{tikzcd}
	{O(p,q)C_2\mathcal{E}_{p,q}^l} && {C_2\mathcal{E}_{0,0}} && {(p,q)\poly C_2\mathcal{E}_{0,0}} \\
	\\
	{O(p,q)C_2\mathcal{E}_{p,q}^s} &&&& {(p,q)\homog C_2\mathcal{E}_{0,0}}
	\arrow["{\Id}"', shift right=4, from=3-1, to=1-1]
	\arrow["{\Id}", shift left=2, from=1-3, to=1-5]
	\arrow["{\Id}", shift left=2, from=1-5, to=1-3]
	\arrow["{\Id}", shift left=4, from=3-5, to=1-5]
	\arrow["{\Id}", shift left=4, from=1-5, to=3-5]
	\arrow["{\res_{0,0}^{p,q}/O(p,q)}", shift left=2, from=3-1, to=3-5]
	\arrow["{\ind_{0,0}^{p,q}\varepsilon^*}", shift left=2, from=3-5, to=3-1]
	\arrow["{\res_{0,0}^{p,q}/O(p,q)}", shift left=2, from=1-1, to=1-3]
	\arrow["{\ind_{0,0}^{p,q}\varepsilon^*}", shift left=2, from=1-3, to=1-1]
	\arrow["{\Id}"', shift right=4, from=1-1, to=3-1]
\end{tikzcd}\]

\subsection{The classification of $(p,q)$-homogeneous functors}\label{sec: classification}

We now aim to show that the Quillen adjunction of Theorem \ref{QAstabletohomog}, between the $(p,q)$-stable model structure and the $(p,q)$-homogeneous model structure, is in fact a Quillen equivalence. To do this, we take the same approach as Barnes and Oman \cite[Section 10]{BO13}, but first we will need to generalise a few more results from the underlying calculus to the $\C$-equivariant setting. The following is a $\C$-generalisation of \cite[Lemma 9.3]{BO13}.

\begin{lem}\label{BO9.3}
The left derived functor of $\res_{0,0}^{p,q}/O(p,q)$ is objectwise weakly equivalent to the functor $EO(p,q)_+\wedge_{O(p,q)} \res_{0,0}^{p,q}(-)$.    
\end{lem}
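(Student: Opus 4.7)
The plan is to mimic the strategy of \cite[Lemma 9.3]{BO13}, with attention paid to the additional $\C$-action. Unravelling the definition, the left derived functor of $\res_{0,0}^{p,q}/O(p,q)$ is computed by applying $\res_{0,0}^{p,q}/O(p,q)$ to a cofibrant replacement $QF\to F$ in $\OEpq^s$. Since the stable model structure is a left Bousfield localisation of the projective model structure of Lemma \ref{objectwise model structure proof}, it has the same cofibrations, so $QF$ is cofibrant in $\OEpql$ and may be built as a transfinite composition of pushouts of maps of the form
\begin{equation*}
    \Jpq(U,-)\wedge\beta_!(i),\qquad U\in\Jzero,\ i\in I_{\C}.
\end{equation*}

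The first key step is to observe that $O(p,q)$ acts freely on every cofibrant object of $\OEpql$. This follows from the explicit form of the generating cofibrations: the $O(p,q)$-action on $\beta_!(X)=(O(p,q)\rtimes\C)_+\wedge_{\C} X$ is induced by left multiplication of $O(p,q)$ on itself, which is free, and this freeness is preserved by smashing with $\Jpq(U,-)$, by pushouts and by transfinite composition. In particular, $\res_{0,0}^{p,q}(QF)(V)$ is, for each $V\in\Jzero$, a pointed $(O(p,q)\rtimes\C)$-space on which $O(p,q)$ acts freely away from the basepoint.

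The second step is the standard principle that for a pointed space $Y$ with a free based $O(p,q)$-action, the projection $EO(p,q)_+\wedge Y\to Y$ induced by $EO(p,q)\to\ast$ is a non-equivariant weak equivalence (as $EO(p,q)$ is contractible) between free $O(p,q)$-spaces, and hence descends to a weak equivalence
\begin{equation*}
    EO(p,q)_+\wedge_{O(p,q)}Y\xrightarrow{\;\simeq\;} Y/O(p,q).
\end{equation*}
Here $EO(p,q)$ is equipped with the $\C$-action induced from the conjugation action of $\C$ on $O(p,q)$, so this map is a map of $\C$-spaces, and the fixed-point splitting (Theorem \ref{splittingtheorems}) together with the freeness of the $O(p,q)$-action on $Y$ ensures the weak equivalence is in fact an equivariant weak equivalence. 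Applying this objectwise to $Y=\res_{0,0}^{p,q}(QF)(V)$ yields a natural objectwise weak equivalence
\begin{equation*}
    EO(p,q)_+\wedge_{O(p,q)}\res_{0,0}^{p,q}(QF)\xrightarrow{\;\simeq\;}\res_{0,0}^{p,q}(QF)/O(p,q).
\end{equation*}

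The third step is to note that the functor $EO(p,q)_+\wedge_{O(p,q)}\res_{0,0}^{p,q}(-)$ is homotopical: $\res_{0,0}^{p,q}$ preserves objectwise weak equivalences by definition, and smashing with the free $O(p,q)$-CW complex $EO(p,q)_+$ and then passing to orbits preserves equivariant weak equivalences of $(O(p,q)\rtimes\C)$-spaces. Hence the cofibrant replacement map $QF\to F$ induces an objectwise weak equivalence
\begin{equation*}
    EO(p,q)_+\wedge_{O(p,q)}\res_{0,0}^{p,q}(QF)\xrightarrow{\;\simeq\;} EO(p,q)_+\wedge_{O(p,q)}\res_{0,0}^{p,q}(F).
\end{equation*}
Stringing these two equivalences together gives the desired identification of the left derived functor. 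The main obstacle is the first step, the cellular verification that $O(p,q)$-freeness is preserved through the cofibrant cell structure in the presence of the extra conjugation $\C$-action; once that is in hand, the remaining steps are formal consequences of $EO(p,q)_+$ being contractible and a free $O(p,q)$-CW complex.
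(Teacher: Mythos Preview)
Your proposal is correct and follows essentially the same approach as the paper: cofibrantly replace, use that projective-cofibrant objects are $O(p,q)$-free to get the equivalence $EO(p,q)_+\wedge_{O(p,q)}\res_{0,0}^{p,q}(QF)\simeq \res_{0,0}^{p,q}(QF)/O(p,q)$, and use that $EO(p,q)_+\wedge_{O(p,q)}\res_{0,0}^{p,q}(-)$ is homotopical to pass back to $F$. One small quibble: your appeal to Theorem \ref{splittingtheorems} in the second step is not quite the right reference, since that theorem concerns the specific spaces $\LL(\R^{a,b},\R^{c,d})$ and $\C\gamma_{p,q}$ rather than a general $Y$; the $\C$-equivariance of the equivalence really comes from the contractibility of $EO(p,q)^{\C}$ (a model for $E(O(p)\times O(q))$) together with the standard fact that orbits of a weak equivalence between free $O(p,q)$-CW objects is a weak equivalence, applied after taking $\C$-fixed points.
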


\begin{proof}
 Let $X\in\OEpq^s$ and denote the cofibrant replacement of $X$ in the projective model structure $\OEpq^l$ by $\hat{c}X$. Then $\hat{c}X$ is in particular $O(p,q)$-free. Hence, there are objectwise weak equivalences
\begin{align*}
EO(p,q)_+\wedge_{O(p,q)}\res_{0,0}^{p,q}(\hat{c}X)&\rightarrow EO(p,q)_+\wedge_{O(p,q)}\res_{0,0}^{p,q}(X)\\
EO(p,q)_+\wedge_{O(p,q)}\res_{0,0}^{p,q}(\hat{c}X)&\rightarrow\res_{0,0}^{p,q}(\hat{c}X)/O(p,q)
\end{align*}
induced by the maps $\hat{c}X\rightarrow X$ and $EO(p,q)_+\rightarrow S^0$ respectively. The result follows directly from this. 
\end{proof}

The following two examples play a key role in classifying homogeneous functors. These examples generalise \cite[Example 5.7 and Example 6.4]{Wei95} respectively. 

\begin{ex}\label{5.7}
Let $\Theta$ be an orthogonal $\C$-spectrum with $O(p,q)$-action and $p,q\geq 1$. The functor $F\in\Ezero$ defined by 
\begin{equation*}
    F:V\mapsto \Omega^\infty [(S^{(p,q)V}\wedge \Theta)_{hO(p,q)}]
\end{equation*}
is $(p,q)$-homogeneous. 
\end{ex}

\begin{proof}
Since $F$ has a delooping, by Corollary \ref{cor: F delooping}, in order to show that $F$ is $(p,q)$-polynomial it suffices to show that $F^{(p+1,q)}$ and $F^{(p,q+1)}$ are both objectwise contractible (where $F^{(m,n)}$ denotes the $(m,n)$-derivative of $F$, $\ind_{0,0}^{m,n}F$).

Recall (see Proposition \ref{loops fibre sequence}) that $F^{(p+1,q)}(V)$ is the homotopy fibre of 
\begin{equation*}
    F^{(p,q)}(V)\rightarrow \Omega^{p,q\R}F^{(p,q)}(V\oplus \R)
\end{equation*}
and that $F^{(p,q+1)}(V)$ is the homotopy fibre of 
\begin{equation*}
    F^{(p,q)}(V)\rightarrow \Omega^{p,q\Rdelta}F^{(p,q)}(V\oplus \Rdelta).
\end{equation*}
Iterating this process gives a lattice of derivatives 

\[\begin{tikzcd}
	{F^{(p,q)}} & {F^{(p-1,q)}} & \dots & {F^{(0,q)}} \\
	{F^{(p,q-1)}} & {F^{(p-1,q-1)}} \\
	\vdots && \ddots \\
	{F^{(p,0)}} &&& F
	\arrow[from=1-1, to=1-2]
	\arrow[from=1-1, to=2-1]
	\arrow[from=1-2, to=1-3]
	\arrow[from=1-3, to=1-4]
	\arrow[from=2-1, to=3-1]
	\arrow[from=3-1, to=4-1]
	\arrow[from=1-2, to=2-2]
	\arrow[from=2-1, to=2-2]
	\arrow[dotted, from=4-1, to=4-4]
	\arrow[dotted, from=1-4, to=4-4]
\end{tikzcd}\]

We will ``identify" this lattice with another lattice
\[\begin{tikzcd}
	{F[p,q]} & {F[p-1,q]} & \dots & {F[0,q]} \\
	{F[p,q-1]} & {F[p-1,q-1]} \\
	\vdots && \ddots \\
	{F[p,0]} &&& F
	\arrow[from=1-1, to=1-2]
	\arrow[from=1-1, to=2-1]
	\arrow[from=1-2, to=1-3]
	\arrow[from=1-3, to=1-4]
	\arrow[from=2-1, to=3-1]
	\arrow[from=3-1, to=4-1]
	\arrow[from=1-2, to=2-2]
	\arrow[from=2-1, to=2-2]
	\arrow[dotted, from=4-1, to=4-4]
	\arrow[dotted, from=1-4, to=4-4]
\end{tikzcd}\]
\noindent where $F[i,j](V)=\Omega^\infty[(S^{(p,q)V}\wedge \Theta)_{hO(p-i,q-j)}]$. That is, we will verify that $F[i,j]^{(1,0)}$ is objectwise equivalent to $F[i+1,j]$ and $F[i,j]^{(0,1)}$ is objectwise equivalent to $F[i,j+1]$, as is true for the functors $F^{(i,j)}$.

Here $O(p-i,q-j)$ is the subgroup of $O(p,q)$ that fixes the first $i$ coordinates and the $(p+1)^\text{st}$ to $(p+j)^\text{th}$ coordinates. That is, for all $g$ in $O(p-i,q-j)$ and all $(x_1,...,x_{p+q})$ in $\R^{p+q\delta}$, if $g (x_1,...,x_{p+q})= (y_1,...,y_{p+q})$, then $x_n=y_n$ for all $n\leq i$ and all $p+1\leq n\leq p+j$. 

Each $F[i,j]$ is an element of $\C \mathcal{E}_{i,j}$. The structure maps are defined by the following series of maps
\begin{align*}
    S^{(i,j)U}\wedge F[i,j](V)&=S^{(i,j)U}\wedge \Omega^\infty[(S^{(p,q)V}\wedge \Theta)_{hO(p-i,q-j)}]\\
    &\rightarrow \Omega^\infty[S^{(i,j)U}\wedge(S^{(p,q)V}\wedge \Theta)_{hO(p-i,q-j)}]\\
    &=\Omega^\infty[(S^{(i,j)U}\wedge S^{(p,q)V}\wedge \Theta)_{hO(p-i,q-j)}]\\
    &\rightarrow \Omega^\infty[(S^{(p,q)U}\wedge S^{(p,q)V}\wedge \Theta)_{hO(p-i,q-j)}]\\
    &\simeq \Omega^\infty[(S^{(p,q)(U\oplus V)}\wedge \Theta)_{hO(p-i,q-j)}]\\
    &=F[i,j](U\oplus V)
\end{align*}
where the second equality holds since $O(p-i,q-j)$ fixes $\R^{i,j}$. Moreover, $F[p,q]$ is a $(p,q)\Omega$-spectrum (by substituting $F[p,q]$ into the adjoint structure map). 

We now want to show that $F[i,j]^{(1,0)}$ is objectwise equivalent to $F[i+1,j]$ and $F[i,j]^{(0,1)}$ is objectwise equivalent to $F[i,j+1]$. Then, since $F[p,q]$ is a $(p,q)\Omega$-spectrum
\begin{align*}
*\simeq F[p,q]^{(1,0)}\equiv (F^{(p,q)})^{(1,0)}=F^{(p+1,q)}\\
*\simeq F[p,q]^{(0,1)}\equiv (F^{(p,q)})^{(0,1)}=F^{(p,q+1)}
    \end{align*}
as desired. We do this by calculating $F[i,j]^{(1,0)}$ and $F[i,j]^{(0,1)}$ using Proposition \ref{loops fibre sequence}. 

First let $0\leq i<p$ and $0\leq j \leq q$. 
\begin{align*}
&F[i,j]^{(1,0)}(V)= \hofibre\left[ F[i,j](V) \rightarrow \Omega^{(i,j)\R} F[i,j](\R\oplus V) \right]\\
=& \hofibre\left[ \Omega^\infty[(S^{(p,q)V}\wedge \Theta)_{hO(p-i,q-j)}]\rightarrow \Omega^{(i,j)\R} \Omega^\infty[(S^{(p,q)(\R\oplus V)}\wedge \Theta)_{hO(p-i,q-j)}]\right] \\
=&\Omega^\infty  \hofibre\left[ (S^{(p,q)V}\wedge \Theta)_{hO(p-i,q-j)}\rightarrow \Omega^{(i,j)\R} [(S^{(p,q)(\R\oplus V)}\wedge \Theta)_{hO(p-i,q-j)}]\right] \\
\simeq & \Omega^\infty  \hofibre\left[ (S^{(p,q)V}\wedge \Theta)_{hO(p-i,q-j)}\rightarrow \Omega^{(i,j)\R} [(S^{(p,q)\R}\wedge S^{(p,q)V}\wedge \Theta)_{hO(p-i,q-j)}]\right] \\
\simeq & \Omega^\infty  \hofibre\left[ (S^{(p,q)V}\wedge \Theta)_{hO(p-i,q-j)}\rightarrow \Omega^{(i,j)\R} [(S^{(i,j)\R}\wedge S^{(p-i,q-j)\R}\wedge S^{(p,q)V}\wedge \Theta)_{hO(p-i,q-j)}]\right] \\
\simeq & \Omega^\infty  \hofibre\left[ (S^{(p,q)V}\wedge \Theta)_{hO(p-i,q-j)}\rightarrow \Omega^{(i,j)\R}\Sigma^{(i,j)\R} [(S^{(p-i,q-j)\R}\wedge S^{(p,q)V}\wedge \Theta)_{hO(p-i,q-j)}]\right] \\
\simeq &\Omega^\infty  \hofibre\left[ (S^{(p,q)V}\wedge \Theta)_{hO(p-i,q-j)}\rightarrow (S^{(p-i,q-j)\R}\wedge S^{(p,q)V}\wedge \Theta)_{hO(p-i,q-j)}\right] 
\end{align*}
where the last weak equivalence is by the $\pi_*$-equivalence $\Omega^V\Sigma^VX\rightarrow X$ for $\C Sp^O[O(p,q)]$. 

The map $S^{(p,q)V}\wedge \Theta \rightarrow S^{(p-i,q-j)\R}\wedge S^{(p,q)V}\wedge \Theta$ is given by 
\begin{equation*}
    S^{0,0}\wedge S^{(p,q)V}\wedge \Theta \xrightarrow[]{\mu\wedge \id\wedge\id} S^{(p-i,q-j)\R}\wedge S^{(p,q)V}\wedge \Theta,
\end{equation*}
where $\mu:S^{0,0}\rightarrow S^{(p-i,q-j)\R}$ is the canonical inclusion, which has stable homotopy fibre $S^{(p-i-1,q-j)\R}_+$. Therefore, the homotopy fibre of $\mu\wedge \id\wedge\id$ is $S^{(p-i-1,q-j)\R}_+\wedge S^{(p,q)V}\wedge \Theta$. Here $O(p-i,q-j)$ acts on $S^{(p-i-1,q-j)\R}_+$ by identifying $S^{(p-i-1,q-j)\R}$ with the unit sphere $S(\R^{p-i,q-j})$ in $\R^{p-i,q-j}$. Since taking homotopy orbits preserves fibre sequences, the homotopy fibre of the map $$(S^{(p,q)V}\wedge \Theta)_{hO(p-i,q-j)}\rightarrow (S^{(p-i,q-j)\R}\wedge S^{(p,q)V}\wedge \Theta)_{hO(p-i,q-j)}$$ is $(S^{(p-i-1,q-j)\R}_+\wedge S^{(p,q)V}\wedge \Theta)_{hO(p-i,q-j)}$.

Then we conclude as follows, where the second weak equivalence is described below. 
\begin{align*}
F[i,j]^{(1,0)}(V)&\simeq \Omega^\infty \left[ (S^{(p-i-1,q-j)\R}_+\wedge S^{(p,q)V}\wedge \Theta)_{hO(p-i,q-j)}\right]\\
&=\Omega^\infty \left[ (S(\R^{p-i,q-j})_+\wedge S^{(p,q)V}\wedge \Theta)_{hO(p-i,q-j)}\right]\\
&\simeq \Omega^\infty \left[ (S^{(p,q)V}\wedge \Theta)_{hO(p-i-1,q-j)}\right]\\
&=F[i+1,j]
\end{align*}

The second weak equivalence holds by Proposition \ref{sphere as quotient of orthogonal groups prop} and that for $X$ a $\C$-spectrum with $O(m,n)$-action, $m> 0$ and $n\geq 0$,
\begin{align*}
(S(\R^{m,n})_+\wedge X)_{hO(m,n)}&=EO(m,n)_+\wedge_{O(m,n)} (S(\R^{m,n})_+\wedge X)\\
&=EO(m,n)_+\wedge_{O(m,n)} (O(m,n)/O(m-1,n)_+\wedge X)\\
&=EO(m,n)_+\wedge_{O(m-1,n)} X\\
&\simeq EO(m-1,n)_+\wedge_{O(m-1,n)} X\\
&=X_{hO(m-1,n)}
\end{align*}since the map $t^*:(O(m,n)\rtimes \C)\TTop_*\rightarrow (O(m-1,n)\rtimes \C)\TTop_*$, induced by the $\C$-equivariant subgroup inclusion map $t:O(m-1,n)\rightarrow O(m,n)$, exhibits $t^*EO(m,n)$ as a model for $EO(m-1,n)$. 

Now let $0\leq i\leq p$ and $0\leq j<q$. A similar calculation shows that 
\begin{align*}
F[i,j]^{(0,1)}(V)&\simeq \Omega^\infty \left[ (S^{(p-i,q-j-1)\Rdelta}_+\wedge S^{(p,q)V}\wedge \Theta)_{hO(p-i,q-j)}\right]\\
&\simeq \Omega^\infty \left[ (S^{(p,q)V}\wedge \Theta)_{hO(p-i,q-j-1)}\right]\\
&=F[i,j+1]
\end{align*}
As above, the second weak equivalence holds by Proposition \ref{sphere as quotient of orthogonal groups prop} and that 
\begin{align*}
(S(\R^{n,m})_+\wedge X)_{hO(m,n)}&=EO(m,n)_+\wedge_{O(m,n)} (S(\R^{n,m})_+\wedge X)\\
&=EO(m,n)_+\wedge_{O(m,n)} (O(n,m)/O(n-1,m)_+\wedge X)\\
&\cong EO(m,n)_+\wedge_{O(m,n)} (O(m,n)/O(m,n-1)_+\wedge X)\\
&=EO(m,n)_+\wedge_{O(m,n-1)} X\\
&\simeq EO(m,n-1)_+\wedge_{O(m,n-1)} X\\
&=X_{hO(m,n-1)}
\end{align*}
where the third step uses the $\C$-equivariant group isomorphism $O(m,n)\rightarrow O(n,m)$ defined for $m,n\geq 1$ by 
\begin{equation*}
A\rightarrow\begin{pmatrix}
    0& \Id_n\\\Id_m &0
\end{pmatrix}A \begin{pmatrix}
    0 & \Id_m\\ \Id_n & 0
\end{pmatrix}.
\end{equation*}

What remains to show is that $F$ is $(p,q)$-reduced ($\Tpq F(V) \simeq *$ for all $V\in \Jzero$). The spectrum $X=S^{(p,q)V}\wedge \Theta$ has equivariant connectivity given by the dimension function $c^*(X)=|(p,q)V^*|+c^*(\Theta)$. That is, $\pi_n^H X$ is trivial for $n\leq c^H(X)$, where
\begin{align*}
    c^e(X)&=(p+q)\Dim(V) + c^e(\Theta) \\
    c^{\C}(X)&=p\Dim(V^{\C}) +q\Dim((V^{\C})^{\perp}) +c^{\C}(\Theta),
\end{align*}
$c^H(\Theta)$ denotes the connectivity of $\Theta$ with respect to its equivariant homotopy groups $\pi_n^H \Theta$ and $(V^{\C})^\perp$ denotes the orthogonal complement of $V^{\C}$. 

Therefore, the map $$F(V)\rightarrow *(V)=*$$ is $(c^*(X)+1)$-connected, since homotopy orbits and $\Omega^\infty$ do not decrease connectivity. Repeated application of Lemma \ref{e.3} gives that this map is a $\Tpq$-equivalence. That is, 
\begin{equation*}
    \Tpq F(V)\simeq \Tpq * (V)=*\qedhere
\end{equation*}
\end{proof}

\begin{rem}
If one replaces the group $O(p,q)$ with the group $O(p)\times O(q)$ of $\C$-equivariant linear isometries on $\mathbb{R}^{p+q\delta}$, then a generalisation like Example \ref{5.7} cannot be achieved. This is because there is no equivariant description of the sphere $S(\R^{p,q})$ as a quotient of these groups like there is for the groups $O(p,q)$ (see Proposition \ref{sphere as quotient of orthogonal groups prop}).
\end{rem}

Before we give the $\C$-generalisation of \cite[Example 6.4]{Wei95}, we first state an equivariant version of a key result used in the proof of the underlying example, the proof of which is an application of the equivariant Freudenthal suspension theorem \cite[Theorem 9.1.4]{May96}. The theorem describes how close the map $[\Omega^\infty X]_{hL} \rightarrow \Omega^\infty [X_{hL}]$ is to being an equivalence, where $X$ is a $G$-spectrum with an action of a compact Lie group $L$. 

\begin{thm}\label{loopsholimthm}
For a finite group $G$ that acts on $L$, let $X$ be a $G$-spectrum with an action of $L$, and $\R[G]$ be the regular representation of $G$. Then the canonical map 
\begin{equation*}
    \xi:[\Omega^\infty X]_{hL} \rightarrow \Omega^\infty [X_{hL}]
\end{equation*}
is $v$-connected for any dimension function $v$ satisfying 
\begin{itemize}
    \item $v(H)\leq 2c^H(X)+1\quad $
    \item $v(H)\leq c^K(X) \quad $
\end{itemize}
for all closed subgroups $H\leq G$ and subgroup pairs $K<H$, where $c^H(X):=\conn(X^H)$.
\end{thm}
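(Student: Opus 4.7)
The plan is to reduce the comparison map $\xi$ to an application of the equivariant Freudenthal suspension theorem \cite[Theorem 9.1.4]{May96} via two filtration steps. First, rewrite $\Omega^\infty X \simeq \hocolim_V \Omega^V X(V)$ over the indexing $G$-representations $V$; since homotopy orbits commute with sequential colimits, both sides of $\xi$ become sequential colimits indexed over $V$, and $\xi$ is the colimit of the maps
\begin{equation*}
\xi_V : [\Omega^V X(V)]_{hL} \to \Omega^V [X(V)_{hL}].
\end{equation*}
It therefore suffices to establish the required connectivity bound on each $\xi_V$ and check that it is preserved under the colimit.

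Next, filter $EL$ by its $L$-cellular skeleta $EL^{(n)}$. This induces compatible filtrations of source and target of $\xi_V$ via $(EL^{(n)})_+ \wedge_L (-)$. The cofibre at stage $n$ comes from attaching $L$-cells of the form $L/K \times D^k$; using that $(L/K)_+ \wedge_L Y$ is equivalent to the restriction of $Y$ along $K \hookrightarrow L$, each such attachment contributes a comparison map of the form
\begin{equation*}
S^k \wedge \Omega^V X(V) \to \Omega^V(S^k \wedge X(V)),
\end{equation*}
now viewed equivariantly for the semi-direct group $G \ltimes K$ sitting inside $G \ltimes L$. This is the natural suspension-loop comparison to which the equivariant Freudenthal theorem directly applies.

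Applying \cite[Theorem 9.1.4]{May96} gives the connectivity of each cell-level map at a closed subgroup $H \leq G$ in terms of $\conn((S^k \wedge X(V))^H)$ and $\conn((S^k \wedge X(V))^J)$ for $J < H$. Translating from the space-level connectivities back to the spectrum-level connectivity $c^\bullet(X)$ via stabilisation, and using that the $V$-loops absorb the dimension of $V$ (with the regular representation $\R[G]$ present in $V$ so that the subgroup comparison can be made uniformly), yields precisely the two claimed bounds $v(H) \leq 2c^H(X) + 1$ and $v(H) \leq c^K(X)$ for $K < H$. Sequential colimits over $n$ and over $V$ preserve connectivity estimates, giving the connectivity of $\xi$ itself.

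The main obstacle will be reconciling the $L$-cellular filtration with the $G$-fixed point bookkeeping: since $G$ acts on $L$, the cells $L/K \times D^k$ are permuted by $G$, so fixed points of $(EL^{(n)})_+ \wedge_L X(V)$ must be computed in this twisted setting, and the Freudenthal bound must be verified uniformly after taking $H$-fixed points. The appearance of $\R[G]$ in the hypothesis reflects exactly this interplay and is what allows the connectivity estimate to be stated symmetrically over all closed subgroups of $G$ with a single dimension function $v$.
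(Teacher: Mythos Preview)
Your proposal is correct and aligns with the paper's approach: the paper's entire proof is the single remark that the result ``is an application of the equivariant Freudenthal suspension theorem \cite[Theorem 9.1.4]{May96},'' with no further details given. Your outline---rewriting $\Omega^\infty$ as a sequential colimit, filtering $EL$ by skeleta, and reducing to cell-level suspension--loop comparison maps governed by equivariant Freudenthal---is a standard and reasonable way to make that application explicit, so you are supplying the details the paper omits rather than taking a different route.

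One minor remark: the regular representation $\R[G]$ appears in the statement but plays no visible role in the conclusion or in the paper's one-line justification; your attempt to explain its presence is plausible but speculative, and you should not lean on it too heavily in a full write-up.
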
 

\begin{ex}\label{6.4}
Let $\Theta$ be a $\C$-spectrum with an action of $O(p,q)$, where $p,q \geq 1$. Then the input functors 
\begin{align*}
    &E:V\mapsto [\Omega^\infty(S^{(p,q)V}\wedge \Theta)]_{hO(p,q)}    \\
    &F:V\mapsto \Omega^\infty [(S^{(p,q)V}\wedge \Theta)_{hO(p,q)}]
\end{align*}
are $T_{p+1,q}T_{p,q+1}$-equivalent under the canonical subfunctor inclusion map $r:E\rightarrow F$. 
\end{ex}

\begin{proof}
The spectrum $X=S^{(p,q)V}\wedge \Theta$ has equivariant connectivity given by the dimension function $c^*(X)=|(p,q)V^*|+c^*(\Theta)$. That is, $\pi_n^H X$ is trivial for all $n\leq c^H(X)$, where
\begin{align*}
    c^e(X)&=(p+q)\Dim(V) + c^e(\Theta) \\
    c^{\C}(X)&=p\Dim(V^{\C}) +q\Dim((V^{\C})^{\perp}) +c^{\C}(\Theta),
\end{align*}
$c^H(\Theta)$ denotes the connectivity of $\Theta$ with respect to its equivariant homotopy groups $\pi_n^H \Theta$ and $(V^{\C})^\perp$ denotes the orthogonal complement of $V^{\C}$. 

Applying Theorem \ref{loopsholimthm} yields that the map $r(V):E(V)\rightarrow F(V)$ is $v$-connected, where 
\begin{align*}
    v(e)&=2(p+q)\Dim(V) + 2c^e(\Theta) +1\\
    v(\C)&=\text{min}\{2p\Dim(V^{\C}) +2q\Dim((V^{\C})^{\perp}) +2c^{\C}(\Theta)+1, (p+q)\Dim(V)+c^e(\Theta)\}
\end{align*}

Corollary \ref{erratum corollary} implies that $\tau_{p+1,q}\tau_{p,q+1}r(V):\tau_{p+1,q}\tau_{p,q+1}E(V)\rightarrow \tau_{p+1,q}\tau_{p,q+1}F(V)$ is at least $(v+1)$-connected. Repeated application of Corollary \ref{erratum corollary} yields that the connectivity of $\tau_{p+1,q}^l\tau_{p,q+1}^lr(V)$ tends to infinity as $l$ tends to infinity. Hence $T_{p+1,q}T_{p,q+1}r$ is an objectwise weak equivalence. \qedhere

\end{proof}

Theorem \ref{boclassification} below states that the Quillen adjunction of Theorem \ref{QAstabletohomog} is a Quillen equivalence. The proof resembles that of Barnes and Oman in \cite[Theorem 10.1]{BO13} and Taggart in \cite[Theorem 7.5]{Tag22unit}, using the $\C$-equivariant generalisations of \cite[Examples 5.7 and 6.4]{Wei95} and \cite[Lemma 9.3]{BO13}, which are given by Example \ref{5.7}, Example \ref{6.4} and Lemma \ref{BO9.3} respectively.

\begin{thm}\label{boclassification}
For all $p,q\geq 1$, the Quillen adjunction 
\begin{equation*} \res_{0,0}^{p,q}/O(p,q):\OEpq^s\rightleftarrows (p,q)\homog\Ezero:\ind_{0,0}^{p,q}\varepsilon^*
\end{equation*}
is a Quillen equivalence. 
\end{thm}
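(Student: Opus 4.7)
The plan is to verify a standard criterion for a Quillen equivalence: namely, that the right adjoint $\ind_{0,0}^{p,q}\varepsilon^*$ reflects weak equivalences between fibrant objects, and that the derived counit is a weak equivalence on every fibrant object. The first condition is essentially immediate from the definition of the homogeneous model structure: every fibrant object in $(p,q)\homog\Ezero$ is $(p,q)$-polynomial, so on such objects $T_{p+1,q}T_{p,q+1}$ acts as an objectwise weak equivalence, and weak equivalences between fibrant objects are therefore detected by $\ind_{0,0}^{p,q}\varepsilon^*$ via objectwise weak equivalences of $\C$-spaces (which are in particular $(p,q)\pi_*$-equivalences).

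For the derived counit, let $F$ be a cofibrant-fibrant object of $(p,q)\homog\Ezero$ and set $\Theta := \ind_{0,0}^{p,q}\varepsilon^* F$. By Proposition \ref{5.12} and Lemma \ref{omega spectra lemma}, $\Theta$ is a $(p,q)\Omega$-spectrum, hence fibrant in $\OEpq^s$. Choose a cofibrant replacement $c\Theta \xrightarrow{\simeq} \Theta$. The derived counit at $F$ factors as
\[
\res_{0,0}^{p,q}(c\Theta)/O(p,q)\longrightarrow \res_{0,0}^{p,q}(\Theta)/O(p,q)\longrightarrow F,
\]
and by Lemma \ref{BO9.3} its source is objectwise equivalent to the functor $V\mapsto EO(p,q)_+\wedge_{O(p,q)}\res_{0,0}^{p,q}(c\Theta)(V)$. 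Via the Quillen equivalence of Theorem \ref{QEstabletospectra}, $c\Theta$ corresponds to a genuine orthogonal $\C$-spectrum $\Theta_F := (\alpha_{p,q})_!(c\Theta)$ with $O(p,q)$-action, and this expression identifies objectwise with
\[
V\longmapsto [\Omega^\infty(S^{(p,q)V}\wedge \Theta_F)]_{hO(p,q)}.
\]

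By Example \ref{6.4} this functor is $T_{p+1,q}T_{p,q+1}$-equivalent to the functor $V\mapsto \Omega^\infty[(S^{(p,q)V}\wedge \Theta_F)_{hO(p,q)}]$, which is $(p,q)$-homogeneous by Example \ref{5.7}. Both source and target of the derived counit are thus $(p,q)$-homogeneous, and by the reflection statement above it suffices to verify that $\ind_{0,0}^{p,q}\varepsilon^*$ sends the derived counit to an objectwise weak equivalence of $\C$-spaces. The target recovers $\Theta$ by definition, and the iterative derivative computations carried out in the proof of Example \ref{5.7} identify the source with a model for $\Theta_F$; the two then agree via the chosen cofibrant replacement $c\Theta\xrightarrow{\simeq}\Theta$.

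The main obstacle is step three above: transporting the explicit homotopy-orbit formula $EO(p,q)_+\wedge_{O(p,q)}\res_{0,0}^{p,q}(c\Theta)$ of Lemma \ref{BO9.3} across the Quillen equivalence of Theorem \ref{QEstabletospectra} into the smash-product expression $[\Omega^\infty(S^{(p,q)V}\wedge \Theta_F)]_{hO(p,q)}$, while keeping careful track of the $(O(p,q)\rtimes\C)$-equivariant structure of Remarks \ref{rem: what is the ms on semi direct prod} and \ref{remark: model structure on semi direct}. Once this dictionary is in place, the remainder of the argument is formal, assembling Examples \ref{5.7} and \ref{6.4} with the reflection principle to close out the claim.
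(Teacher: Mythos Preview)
There is a genuine logical gap in the overall strategy. The ``standard criterion'' you invoke --- that the right adjoint $\ind_{0,0}^{p,q}\varepsilon^*$ reflects weak equivalences between fibrant objects \emph{and} that the derived counit is a weak equivalence on every fibrant object --- is not a valid characterisation of Quillen equivalence. The two standard criteria (Hovey, Corollary~1.3.16) are: either (i) the right adjoint reflects weak equivalences between fibrant objects and the derived \emph{unit} is a weak equivalence on cofibrant objects of $\OEpq^s$; or (ii) the \emph{left} adjoint reflects weak equivalences between cofibrant objects and the derived counit is a weak equivalence on fibrant objects of $(p,q)\homog\Ezero$. You have mixed the reflection condition from (i) with the (co)unit condition from (ii). At the level of homotopy categories this amounts to asking that the right adjoint be conservative and the counit be an isomorphism; but the adjunction $L:\mathrm{Set}\rightleftarrows *:R$ (with $R(*)$ a singleton) shows this is not enough: the counit is the identity and $R$ is trivially conservative, yet the unit $X\to\{*\}$ is not an isomorphism.

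The paper (following \cite[Theorem~10.1]{BO13} and \cite[Theorem~7.5]{Tag22unit}) uses criterion~(i): after noting that $\ind_{0,0}^{p,q}\varepsilon^*$ reflects weak equivalences between fibrant objects, one checks the derived unit on a cofibrant $Y\in\OEpq^s$. Via Lemma~\ref{BO9.3} the functor $\res_{0,0}^{p,q}(Y)/O(p,q)$ is identified with $[\Omega^\infty(S^{(p,q)V}\wedge\Psi)]_{hO(p,q)}$ for $\Psi=(\alpha_{p,q})_! Y$; Example~\ref{6.4} upgrades this to a $T_{p+1,q}T_{p,q+1}$-equivalence with $\Omega^\infty[(S^{(p,q)V}\wedge\Psi)_{hO(p,q)}]$, and the derivative computation of Example~\ref{5.7} identifies $\ind_{0,0}^{p,q}\varepsilon^*$ of the latter with $\alpha_{p,q}^*\Psi$. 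The derived unit then reduces to the derived unit of the Quillen equivalence in Theorem~\ref{QEstabletospectra}. Your ingredients (Examples~\ref{5.7} and~\ref{6.4}, Lemma~\ref{BO9.3}, Theorem~\ref{QEstabletospectra}) are exactly the right ones; the fix is simply to reorganise the argument around the derived unit rather than the derived counit.
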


\begin{cor}\label{zigzagclassification}
There is an equivalence of homotopy categories 
\begin{equation*}
    \Ho(\C Sp^O[O(p,q)])\rightleftarrows \Ho((p,q)\homog\Ezero)
\end{equation*}
for $p,q\geq 1$. 
\end{cor}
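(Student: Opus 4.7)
The plan is to assemble the corollary directly from the two Quillen equivalences already established in the paper, namely Theorem \ref{QEstabletospectra} and Theorem \ref{boclassification}. Together these form the zig-zag
\[
(p,q)\homog\Ezero \;\rightleftarrows\; \OEpq^s \;\rightleftarrows\; \C Sp^O[O(p,q)],
\]
whose left-hand adjunction is $\bigl(\res_{0,0}^{p,q}/O(p,q),\,\ind_{0,0}^{p,q}\varepsilon^*\bigr)$ and whose right-hand adjunction is $\bigl((\alpha_{p,q})_!,\,\alpha_{p,q}^*\bigr)$.

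First, I would invoke the standard fact that any Quillen equivalence $F \dashv G$ between model categories $\mathcal{M}$ and $\mathcal{N}$ induces an equivalence of homotopy categories $\mathbb{L}F : \Ho(\mathcal{M}) \rightleftarrows \Ho(\mathcal{N}) : \mathbb{R}G$ (see, e.g., Hovey \cite[Theorem 1.3.7 and Proposition 1.3.13]{Hov99}, which is already cited in the paper). Applying this to Theorem \ref{QEstabletospectra} gives an equivalence $\Ho(\OEpq^s) \simeq \Ho(\C Sp^O[O(p,q)])$, and applying it to Theorem \ref{boclassification} gives an equivalence $\Ho(\OEpq^s) \simeq \Ho((p,q)\homog\Ezero)$ (both valid for $p,q \geq 1$, as required by Theorem \ref{boclassification}).

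Finally, composing these two equivalences of homotopy categories yields the desired equivalence
\[
\Ho(\C Sp^O[O(p,q)]) \;\rightleftarrows\; \Ho((p,q)\homog\Ezero).
\]
There is no genuine obstacle here: all the work has already been done in establishing the two Quillen equivalences. The only minor point worth noting is that equivalences of categories compose, so the composite zig-zag of derived adjunctions does indeed assemble into a single equivalence of homotopy categories, completing the proof.
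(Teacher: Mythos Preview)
Your proposal is correct and matches the paper's approach exactly: the paper states this corollary immediately after Theorem \ref{boclassification} with no separate proof, treating it as the evident consequence of composing the two Quillen equivalences of Theorems \ref{QEstabletospectra} and \ref{boclassification}, which is precisely what you have written out.
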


Rephrasing this classification using the derived adjunctions, we can explicitly describe how $(p,q)$-homogeneous functors are completely determined by genuine orthogonal $\C$-spectra with an action of $O(p,q)$. The following classification is a $\C$-equivariant generalisation of \cite[Theorem 7.3]{Wei95}. We will denote the image $\mathbb{L}(\alpha_{p,q})_!\mathbb{R}\ind_{0,0}^{p,q}\varepsilon^*F\in \C Sp^O[O(p,q)]$ of a $F\in\Ezero$ under the derived zig-zag of Quillen equivalences by $\Theta_F^{p,q}$\index{$\Theta_F^{p,q}$}. That is, $\Theta_F^{p,q}$ is a specific $\C$-spectrum with an action of $O(p,q)$, which is determined by the functor $F$. The proof follows the method used by Taggart \cite[Theorem 8.1]{Tag22unit}.

\begin{thm}\label{weissclassification}
Let $p,q\geq 1$. If $F\in\Ezero$ is a $(p,q)$-homogeneous functor, then $F$ is objectwise weakly equivalent to 
\begin{equation*}
    V\mapsto \Omega^\infty[(S^{(p,q)V}\wedge\Theta_F^{p,q})_{hO(p,q)}]
\end{equation*}
Conversely, every functor of the form 
\begin{equation*}
    V\mapsto \Omega^\infty[(S^{(p,q)V}\wedge\Theta)_{hO(p,q)}]
\end{equation*}
where $\Theta\in \C Sp^O[O(p,q)]$ is $(p,q)$-homogeneous. 
\end{thm}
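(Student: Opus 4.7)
The plan is to combine the zig-zag of Quillen equivalences from Theorem \ref{boclassification} and Theorem \ref{QEstabletospectra} and then unravel explicitly what the derived functors do to the object $\Theta_F^{p,q}=\mathbb{L}(\alpha_{p,q})_!\mathbb{R}\ind_{0,0}^{p,q}\varepsilon^*F$, following the same approach Taggart uses in \cite[Theorem 8.1]{Tag22unit}.

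The converse direction is immediate from Example \ref{5.7}, which asserts precisely that every functor of the form $V\mapsto \Omega^\infty[(S^{(p,q)V}\wedge\Theta)_{hO(p,q)}]$ with $\Theta\in\C Sp^O[O(p,q)]$ is $(p,q)$-homogeneous, so no further work is needed.

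For the forward direction, I would first observe that since $F$ is $(p,q)$-homogeneous, it is in particular fibrant in the $(p,q)$-homogeneous model structure, and the derived counit of the composite Quillen equivalence gives an objectwise weak equivalence
\[
F \;\simeq\; \mathbb{L}(\res_{0,0}^{p,q}/O(p,q))\,\mathbb{R}\alpha_{p,q}^*\,\Theta_F^{p,q}.
\]
By Lemma \ref{BO9.3}, the left-derived functor $\mathbb{L}(\res_{0,0}^{p,q}/O(p,q))$ is objectwise weakly equivalent to $EO(p,q)_+\wedge_{O(p,q)}\res_{0,0}^{p,q}(-)$. A fibrant replacement of $\alpha_{p,q}^*\Theta_F^{p,q}$ in the $(p,q)$-stable model structure is a $(p,q)\Omega$-spectrum, so the adjoint structure maps (Definition \ref{def: pq omega spectrum}) give a $\C$-equivariant weak equivalence $\alpha_{p,q}^*\Theta_F^{p,q}(V)\simeq \Omega^\infty(S^{(p,q)V}\wedge \Theta_F^{p,q})$, with the requisite $O(p,q)$-action inherited from $\Theta_F^{p,q}$. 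Putting these identifications together presents $F$ as objectwise weakly equivalent to
\[
V\;\mapsto\; [\Omega^\infty(S^{(p,q)V}\wedge\Theta_F^{p,q})]_{hO(p,q)}.
\]

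Finally, Example \ref{6.4} supplies a $T_{p+1,q}T_{p,q+1}$-equivalence between this functor and the target $V\mapsto \Omega^\infty[(S^{(p,q)V}\wedge\Theta_F^{p,q})_{hO(p,q)}]$. Since $F$ is $(p,q)$-polynomial and the target is $(p,q)$-homogeneous (hence $(p,q)$-polynomial) by Example \ref{5.7}, both functors are objectwise weakly equivalent to their own $T_{p+1,q}T_{p,q+1}$-approximations, and so the $T_{p+1,q}T_{p,q+1}$-equivalence upgrades to an honest objectwise weak equivalence, completing the proof. The main obstacle will be the careful bookkeeping at the step identifying $\alpha_{p,q}^*\Theta_F^{p,q}$ with $V\mapsto \Omega^\infty(S^{(p,q)V}\wedge\Theta_F^{p,q})$: one must verify that the adjoint structure maps realise the infinite loop space description $(O(p,q)\rtimes \C)$-equivariantly, so that the subsequent homotopy orbit construction makes sense and is compatible with Example \ref{6.4}.
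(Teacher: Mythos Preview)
Your approach is essentially the same as the paper's --- it uses the same key ingredients (Examples \ref{5.7} and \ref{6.4}, Lemma \ref{BO9.3}, and the zig-zag of Quillen equivalences) in the same logical order. There is, however, a genuine gap.

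You assert that the derived counit of the composite Quillen equivalence yields an \emph{objectwise} weak equivalence $F\simeq \mathbb{L}(\res_{0,0}^{p,q}/O(p,q))\,\mathbb{R}\alpha_{p,q}^*\,\Theta_F^{p,q}$. It does not: the Quillen equivalence of Theorem \ref{boclassification} has target $(p,q)\homog\Ezero$, so the derived (co)unit is only a weak equivalence in the $(p,q)$-homogeneous model structure, i.e.\ a map inducing an objectwise equivalence after applying $\ind_{0,0}^{p,q}\varepsilon^*T_{p+1,q}T_{p,q+1}$. This is strictly weaker than an objectwise equivalence, and also weaker than a $T_{p+1,q}T_{p,q+1}$-equivalence. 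Consequently your final paragraph does not go through as written: you have $F\simeq_{\mathrm{homog}}E$ and $E\simeq_{T}G$, and while the second is a homogeneous equivalence, composing only gives $F\simeq_{\mathrm{homog}}G$, not the $T_{p+1,q}T_{p,q+1}$-equivalence your upgrade step assumes.

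The paper closes this gap by invoking \cite[Theorem 3.2.13]{Hir03} twice. First, since $(p,q)\homog\Ezero$ is a right Bousfield localisation of $(p,q)\poly\Ezero$ and both $E$ and $F$ are cofibrant there, the homogeneous equivalence $E\simeq F$ upgrades to a $T_{p+1,q}T_{p,q+1}$-equivalence. Second, since $(p,q)\poly\Ezero$ is a left Bousfield localisation of the projective structure, the resulting chain $G\simeq T_{p+1,q}T_{p,q+1}E\simeq T_{p+1,q}T_{p,q+1}F\simeq F$ becomes objectwise. You should insert this two-step upgrade; the ``bookkeeping'' you flag about the $(O(p,q)\rtimes\C)$-equivariance of the adjoint structure maps is comparatively routine.
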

\begin{proof}
The proof of the converse statement is exactly Example \ref{5.7}. 

Let $F$ be cofibrant-fibrant in $(p,q)$-homog-$\Ezero$. That is, $F$ is $(p,q)$-homogeneous and cofibrant in the projective model structure. Define functors $E,G\in\Ezero$ by 
\begin{align*}
    E(V)&=(\ind_{0,0}^{p,q}\varepsilon^* F(V))_{hO(p,q)}\\
    G(V)&=\Omega^\infty[(S^{(p,q)V}\wedge \Theta_F^{p,q})_{hO(p,q)}]
\end{align*}
The functors $E$ and $G$ are $T_{p+1,q}T_{p,q+1}$-equivalent, since 
\begin{equation*}
    \ind_{0,0}^{p,q}\varepsilon^* F(V)=\alpha_{p,q}^*\Theta_F^{p,q}(V)\simeq\Omega^\infty(S^{(p,q)V}\wedge\Theta_F^{p,q})
\end{equation*}
and $[\Omega^\infty(S^{(p,q)V}\wedge\Theta_F^{p,q})]_{hO(p,q)}$ is $T_{p+1,q}T_{p,q+1}$-equivalent to $G$ by Example \ref{6.4}. 

Since $G$ is $(p,q)$-polynomial by Example \ref{5.7}, Lemma \ref{weiss6.3.2} implies that $G$ is objectwise weakly equivalent to $T_{p+1,q}T_{p,q+1}E$. Therefore, there is an objectwise weak equivalence between $\ind_{0,0}^{p,q}\varepsilon^*T_{p+1,q}T_{p,q+1}E$ and $\ind_{0,0}^{p,q}\varepsilon^* G$, since $\ind_{0,0}^{p,q}\varepsilon^*$ is right Quillen and preserves weak equivalences of fibrant objects. 

Using the ``identification" $\equiv$ from Example \ref{5.7}, we get the following.
\begin{equation*}
    \ind_{0,0}^{p,q}\varepsilon^*G(V)\equiv G[p,q](V):=\Omega^\infty(S^{(p,q)V}\wedge \Theta_F^{p,q})\simeq \ind_{0,0}^{p,q}\varepsilon^*F(V)
\end{equation*}
Therefore, there is a zig-zag of objectwise weak equivalences between $\ind_{0,0}^{p,q}\varepsilon^*T_{p+1,q}T_{p,q+1}E$ and $\ind_{0,0}^{p,q}\varepsilon^*F$. 

Since $F$ is $(p,q)$-homogeneous by assumption, it is in particular $(p,q)$-polynomial. Then by an application of Lemma \ref{weiss6.3.2}, there is a zig-zag of objectwise weak equivalences 
\begin{equation*}
\ind_{0,0}^{p,q}\varepsilon^*T_{p+1,q}T_{p,q+1}E\simeq\ind_{0,0}^{p,q}\varepsilon^*T_{p+1,q}T_{p,q+1}F   
\end{equation*}
That is, $E$ and $F$ are weakly equivalent in the $(p,q)$-homogeneous model structure.

Since both $E$ and $F$ are cofibrant in $(p,q)$-homog-$\Ezero$, an application of \cite[Theorem 3.2.13 (2)]{Hir03} implies that $E$ and $F$ are weakly equivalent in the $(p,q)$-polynomial model structure. Since both $E$ and $F$ are fibrant in $(p,q)$-poly-$\Ezero$, an application of \cite[Theorem 3.2.13 (1)]{Hir03} implies that $E$ and $F$ are weakly equivalent in the projective model structure. Hence, there are objectwise weak equivalences
\begin{equation*}
    G\simeq T_{p+1,q}T_{p,q+1}E\simeq T_{p+1,q}T_{p,q+1}F\simeq F
\end{equation*}
since $T_{p+1,q}T_{p,q+1}$ preserves objectwise equivalences and $F$ is $(p,q)$-homogeneous. 

For general $(p,q)$-homogeneous $F$ the result follows by cofibrantly replacing $F$ in the projective model structure and then applying the argument above. 
\end{proof}

The final theorem is an application of the classification Theorem \ref{weissclassification}. This theorem describes how the classification is actually used, in order to study one of the input functors (see Definition \ref{jzero and ezero def}). In particular, the fibre $$D_{p,q}X(V) \rightarrow T_{p+1,q}T_{p,q+1}X(V)\rightarrow T_{p,q}X(V)$$ is determined by the $(p,q)$-derivative of $D_{p,q}X$. This is analogous to studying the layers of the Taylor tower of approximations in the underlying calculus, and the statement is similar to that of Weiss \cite[Theorem 9.1]{Wei95}.
\begin{thm}
For all $X\in\Ezero$, $p,q\geq 1$ and $V\in\Jzero$, there exists homotopy fibre sequences 
\begin{equation*}
  \Omega^\infty[(S^{(p,q)V}\wedge\Theta_{D_{p,q}X}^{p,q})_{hO(p,q)}] \rightarrow T_{p+1,q}T_{p,q+1}X(V)\rightarrow T_{p,q}X(V)
\end{equation*}
\end{thm}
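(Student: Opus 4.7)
The plan is to combine two previous results that have already done the heavy lifting. First, I recall that by Theorem \ref{thm: DYpq is homog}, the homotopy fibre $D_{p,q}X$ of the natural map
\begin{equation*}
    r_{p,q}:T_{p+1,q}T_{p,q+1}X\longrightarrow T_{p,q}X
\end{equation*}
is itself a functor in $\Ezero$ which is both $(p,q)$-polynomial and $(p,q)$-reduced; in particular, $D_{p,q}X$ is $(p,q)$-homogeneous in the sense of Section~\ref{sec: homog functors}. This is exactly the input needed to apply the classification.

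Next, I would feed $D_{p,q}X$ into the classification Theorem~\ref{weissclassification}. Since $p,q\geq 1$ and $D_{p,q}X$ is $(p,q)$-homogeneous, that theorem produces a $\C$-spectrum $\Theta^{p,q}_{D_{p,q}X}\in \C Sp^O[O(p,q)]$, obtained as $\mathbb{L}(\alpha_{p,q})_!\mathbb{R}\ind_{0,0}^{p,q}\varepsilon^*(D_{p,q}X)$ via the derived zig-zag of Quillen equivalences, together with a chain of objectwise weak equivalences of $\C$-spaces
\begin{equation*}
    D_{p,q}X(V)\;\simeq\;\Omega^\infty\bigl[(S^{(p,q)V}\wedge \Theta^{p,q}_{D_{p,q}X})_{hO(p,q)}\bigr],
\end{equation*}
natural in $V\in\Jzero$.

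Finally, by definition $D_{p,q}X(V)$ sits as the homotopy fibre of $r_{p,q}$ evaluated at $V$, so splicing this equivalence into the defining homotopy fibre sequence
\begin{equation*}
    D_{p,q}X(V)\longrightarrow T_{p+1,q}T_{p,q+1}X(V)\longrightarrow T_{p,q}X(V)
\end{equation*}
yields the desired homotopy fibre sequence. I do not expect any real obstacle here: the theorem is essentially a formal consequence of Theorem~\ref{thm: DYpq is homog} (identifying $D_{p,q}X$ as $(p,q)$-homogeneous) and Theorem~\ref{weissclassification} (classifying $(p,q)$-homogeneous functors). The only minor care needed is to note that the equivalence of Theorem~\ref{weissclassification} is natural in $V$, so that applying it pointwise really does identify the entire fibre functor with $V\mapsto \Omega^\infty[(S^{(p,q)V}\wedge \Theta^{p,q}_{D_{p,q}X})_{hO(p,q)}]$ inside a homotopy fibre sequence of $\C$-spaces.
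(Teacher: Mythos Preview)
Your proposal is correct and follows essentially the same route as the paper's proof: invoke Theorem~\ref{thm: DYpq is homog} to see that $D_{p,q}X$ is $(p,q)$-homogeneous, then apply Theorem~\ref{weissclassification} to identify it with $V\mapsto \Omega^\infty[(S^{(p,q)V}\wedge\Theta_{D_{p,q}X}^{p,q})_{hO(p,q)}]$ and substitute into the defining homotopy fibre sequence. The only additional remark in the paper is a brief appeal to Lemma~\ref{weiss6.3.2} to describe the map $r_{p,q}$, which does not affect the argument.
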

\begin{proof}
The map $T_{p+1,q}T_{p,q+1}X\rightarrow T_{p,q}X$ is exactly $T_{p+1,q}T_{p,q+1}$ of the canonical inclusion map $X\rightarrow T_{p,q}X$, by Lemma \ref{weiss6.3.2}. Let $D_{p,q}X$ be the homotopy fibre of this map. Then $D_{p,q}X$ is $(p,q)$-homogeneous (see Theorem \ref{thm: DYpq is homog}). An application of the classification Theorem \ref{weissclassification} gives that 
\begin{equation*}
    D_{p,q}X(V)\simeq \Omega^\infty[(S^{(p,q)V}\wedge \Theta_{D_{p,q}X}^{p,q})_{hO(p,q)}].\qedhere
\end{equation*}

\end{proof}

\bibliographystyle{alphaurl}
\bibliography{references}
\end{document}